\newcommand{\RR}{{\mathbb R}}
\newcommand{\HH}{{\mathbb H}}
\newcommand{\EE}{{\mathbb E}}
\newcommand{\LL}{{\mathbb L}}
\newcommand{\PP}{{\mathbb P}}
\newcommand{\WW}{{\mathbb W}}
\newcommand{\VV}{{\mathbb V}}
\newcommand{\bu}{{\bf u}}
 \newcommand{\bU}{{\bf U}}
 \newcommand{\bQ}{{\bf Q}^0_h}
 \newcommand{\bE}{{\bf E}}
\newcommand{\bPhi}{{\bf \Phi}}
\newcommand{\Zt}{\widetilde Z}
\numberwithin{equation}{section}
\newtheorem{theorem}{Theorem}[section]
\newtheorem{defn}[theorem]{Definition}
\newtheorem{lemma}[theorem]{Lemma}
\newtheorem{remark}[theorem]{Remark}
\date{\today}
\begin{document}
\title[Strong Convergence without localization]
{Strong rates of convergence  of space-time discretization schemes   
 \\ for the  2D Navier-Stokes equations with additive noise}

\author[H. Bessaih]{Hakima Bessaih}
\address{University of Wyoming, Department of Mathematics and Statistics, Dept. 3036, 1000
East University Avenue, Laramie WY 82071, United States}
\email{ bessaih@uwyo.edu}

\author[A. Millet]{ Annie Millet}
\address{SAMM, EA 4543,
Universit\'e Paris 1 Panth\'eon Sorbonne, 90 Rue de
Tolbiac, 75634 Paris Cedex France {\it and} Laboratoire de
Probabilit\'es, Statistique et Mod\'elisation, UMR 8001, 
  Universit\'es Paris~6-Paris~7} 
\email{amillet@univ-paris1.fr}

\thanks{  Hakima Bessaih was partially supported by the Simons Foundation grant 582264. }  

\subjclass[2000]{ Primary 60H15, 60H35; Secondary 76D06, 76M35.} 

\keywords{Stochastic Navier-Sokes equations, numerical schemes, 
strong convergence, implicit time discretization, finite elements,   exponential moments}

\begin{abstract}
 We consider the strong solution of the 2D Navier-Stokes equations in a  torus  subject to  an additive noise. 
We implement a fully implicit time numerical scheme and a finite element method in space.  
We prove that the rate of convergence of the schemes is $\eta\in[0,1/2)$ in time  and 1 in space. Let us mention that the coefficient $\eta$ is equal to the time
 regularity of the solution with values in $\LL^2$.  Our method relies on the existence of finite  exponential moments for both the solution and its 
 time approximation. 
 Our main idea is to use a discrete Gronwall lemma for the error estimate without any  localization. 
\end{abstract}

\maketitle
 

\section{Introduction}\label{s1} \smallskip
Numerical schemes and algorithms have been introduced to best approximate and construct solutions for PDEs. 
Many algorithms  based on either finite difference, finite element  or spectral Galerkin methods 
(for the space discretization), and on either Euler schemes, Crank-Nicolson or Runge-Kutta schemes (for the time discretization) 
have been introduced for both the linear and nonlinear cases. Their rates of convergence have been widely investigated.
The literature on numerical analysis for SPDEs is now very extensive. 
In \cite{Ben1}, the models are either linear, have global Lipschitz properties,
 or more generally some monotonicity property. In this case the convergence is proven to be in mean square. 
When nonlinearities are involved that are not of Lipschitz or monotone type,  then a rate of  convergence 
 in mean square is more difficult to  obtain.  
  Indeed, because of the stochastic perturbation, one may not use the Gronwall lemma after taking the expectation of the
   error bound   
  since it involves a nonlinear term which is often  quadratic. 

In this paper we study  the  so-called incompressible Navier-Stokes equations, which describes the dynamics (velocity and pressure) of an 
incompressible fluid flow. 
These equations are parametrized by the viscosity coefficient $\nu>0$. 
Their quantitative and qualitative properties depend on the dimensional setting. 
For example, while the well posedness of global weak solutions of the 2D Navier-Stokes   is  well known and established, 
 the uniqueness of global weak solutions for the 3D case is completely open.
We will focus on the 2D incompressible Navier-Stokes equations in a bounded domain $D= [0,L]^2$,
subject to an external additive  noise  defined as:
\begin{align} \label{2D-NS}
 \partial_t u - \nu \Delta u + (u\cdot \nabla) u + \nabla \pi & = dW\quad \mbox{\rm in } \quad (0,T)\times D,\\
 \mbox{\rm div }u&=0 \quad \mbox{\rm in } \quad (0,T)\times D,
 \end{align}
 where  $T>0$. The process 
$u: \Omega\times (0,T)\times D  \to \RR^2$  is  the velocity field 
 with initial condition $u_0$ in $D$ and periodic boundary conditions $u(t,x+L v_i)=u(t,x)$ on  $(0,T)\times \partial D$, 
 where $v_i$, $i=1,2$
 denotes the canonical basis of $\RR^2$, and $\pi : \Omega\times (0,T)\times D  \to \RR$  is  the  pressure. 
  The external force is described by an additive stochastic perturbation and will be defined in detail later. 
Let $(\Omega, {\mathcal F}, ({\mathcal F}_t),  \PP)$ denote a filtered probability space and $W= (W(t), t\geq 0)$ be a Wiener process to be precisely defined later on.

Various space-time numerical schemes have been studied for the stochastic Navier-Stokes equations with a multiplicative noise, that is where in the
right hand side of  \eqref{2D-NS} we replace $dW$ by $G(u) \, dW$, where $G$ is a Lipschitz function with at most linear growth.
 We refer to   \cite{ BeBrMi, BrCaPr,Dor, Breckner, CarPro, BreDog}, where convergence in probability is  obtained  with various rates.  
 As stated previously, the main tool to get the convergence in probability  is the localization of the nonlinear term over a space of large probability.
  We studied the strong (that is  $L^2(\Omega)$) rate of convergence of the time  implicit Euler scheme (resp. space-time implicit Euler scheme coupled with
  finite element space discretization) in our previous papers \cite{Be-Mi_time}  (resp.  \cite{BeMi-FE}) for an $H^1$-valued initial condition.  
  The method is based on the fact that the solution (and the scheme) have finite  moments (bounded uniformly on the mesh). 
  For a general multiplicative noise, the rate is logarithmic. When the diffusion coefficient is bounded (which is a slight extension of an additive noise),
  the solution has exponential  moments;  we used this property in \cite{Be-Mi_time} and \cite{BeMi-FE} to  get an explicit polynomial strong rate
   of convergence. However, this rate depends on the viscosity and the strength of the noise, and is strictly less than 1/2 for the time parameter (resp. than $1$ for the
   spatial one). For a given viscosity, the rates of convergence increase to 1/2 (resp. $1$) when the noise's intensity on the time interval $[0,T]$
    approaches 0. Furthermore, the
   speed of convergence for the time discretization requires either some link between the space and time discretization parameter for general finite elements
   which satisfy the discrete LBB-condition,
   or divergence free finite elements such as Scott-Vogelius mixed elements. 
    Note that due to the stochastic forcing term, the time speed of convergence cannot exceed 1/2, which is the optimal exponent. 
    \smallskip
    
    In this paper, our aim is to obtain the best rates of convergence for the time and full (that is space-time) Euler schemes for a purely additive noise and an 
    $H^1$-valued initial condition. 
    These rates were obtained for the convergence in probability of the full scheme with a multiplicative noise and an $H^2$-valued initial
    condition in \cite{BreDog}. The proofs rely on some
    estimates already proven in \cite{BeMi-FE}, and on the existence of exponential moments for $\sup_{t\in [0,T]}  \big( |A^{1/2} u(t)|_{\LL^2}^2 
    + \int_0^t |A u(s)|_{\LL^2}^2 ds\big) $, and for a similar quantity for the time discretization (seeTheorem \ref{exp-mom}).  
    The fact that we have a purely additive perturbation
     implies that the noise disappears from the difference of the true solution and its approximation on the
    time grid. This enables us to use the Gronwall lemma for almost every $\omega$; the corresponding upper bound is $Z e^{Tg}$, where $g= C \sup_{t\in [0,T]} 
    |A^{1/2} u(t)|_{\LL^2}^2 $ and $Z$ has  finite moments of all orders. Under some conditions on  the strength of the noise in terms of  the viscosity and the
    terminal time $T$,
    we prove that the implicit time Euler scheme converges in $\LL^2$ with rate almost 1/2, and that the full space-time scheme converges with rate
    $h^\eta +k$ with $\eta \in (0,\frac{1}{2})$, where $h$ is the time mesh and $k$ is the scaling factor of the finite elements. Unlike \cite{BeMi-FE}
    we do not need some constraint between $h$ and $k$ when the finite elements are general and satisfy the discrete LBB condition. Divergence free finite elements
    only enable to have less constraints on the noise, but does not affect the speed of convergence. The proof of the rate of convergence of the implicit time scheme is
    very simple (see Section \ref{sec_Euler}).  The proof of the rate of convergence for the full scheme is based on  the same strategy, using the difference between
     the time and
    full schemes;  it    requires some more technical upper estimates on
     various error terms before using the Gronwall lemma. Some of these estimates were already proven in \cite{BeMi-FE};  we include 
     the arguments for the sake of completeness.  Note that the decomposition we use for the bilinear term is similar to that in \cite{Be-Mi_time} but
     different from that in \cite{CarPro} and provides   better estimates.  We at first prove the strong convergence for a deterministic initial condition $u_0\in V$
     under some constraints on the ``strength of the noise" described by the trace of the corresponding covariance operator $Q$.
        We also study the case of a random initial condition $u_0$ such that $\|u_0\|_V^2$ has exponential moments of order $\gamma_0$. In that case, we
     have to impose a lower bound on $\gamma_0$ in terms of $\nu $ and $T$, as well as a related upper bound on ${\rm Tr}(Q)$, and some balance between the
     integrability condition of $u_0$ and the strength of the noise has to be fulfilled. For example, if $u_0$ is a Gaussian
     $V$-valued random variable independent on the noise with covariance operator $Q_0$, this extra condition requires that the trace of $A^{\frac{1}{2}} Q_0$
      is ``small". 
   \smallskip
   
   The paper is organized as follows. In section \ref{preliminary}  we recall some results on the solution to \eqref{2D-NS}. 
   Section \ref{sec_Euler} is devoted to establish the strong speed
   of convergence of the fully implicit Euler scheme and state the existence of exponential moments of the square of its $H^1$ norm uniformly in time. 
   In section \ref{s4}  we give upper estimates of the difference between the time and space time schemes; using section \ref{sec_Euler} we deduce
   the strong speed of convergence of the full scheme for general finite elements which satisfy the discrete LBB condition, and divergence-free finite elements.  
   The above strong  convergence results are proven for  a deterministic initial condition $u_0 \in V$. 
   In section \ref{s-u0-random}, we establish similar results for a random initial condition $u_0$ with exponential moments, 
    and sketch the changes in the corresponding
   proofs.   Section \ref{s-proofs-time} provides the proofs  of time regularity results of the solution $u$ to \eqref{2D-NS} which
   together with the existence of exponential moments of $\sup_{t\in [0,T]} |A^{1/2}u(t)|_{\LL^2}^2$ is a key ingredient of the speed of convergence of the
   time scheme. In section 
   \ref{s-exp-mom-Euler} we prove the existence of exponential moments  for the time scheme, both in $V$ uniformly on the time grid, and for 
   a  ``discrete" analog of the time integral of the $H^2$-norm. This refines previous results proved in \cite{BeMi-FE} and is needed to deal with general 
   finite element discretization, when we only require the discrete LBB condition.

   \smallskip

As usual, except if specified otherwise, $C$ denotes a positive constant that may change from line to line,
 and $C(a)$ denotes  a positive constant depending on the parameter $a$.

\section{Notations and preliminary results}\label{preliminary} 
Let ${\mathbb L}^p:=L^p(D)^2$ (resp. ${\mathbb W}^{k,p}:=W^{k,p}(D)^2$)  denote the usual Lebesgue and Sobolev spaces of 
vector-valued functions
endowed with the norms $|\cdot |_{\LL^2}$ (resp. $\|\cdot \|_{{\mathbb W}^{k,p}}$). 
 In what follows, we will  consider velocity fields that have   mean zero  
 over  $[0,L]^2$.  Let $\LL^2_{per}$ denote the subset of $\LL^2$  periodic functions with mean zero over   $[0,L]^2$, and let
\begin{align*}
  H:= &\{ u\in \LL^2_{per} \; : \; {\rm div }\;  u=0 \quad \mbox {\rm weakly in }\;  D \}, \qquad
  V:=  H \cap {\mathbb W}^{1,2}
  \end{align*}
   be  separable  Hilbert spaces.  The space $H$ inherits its inner product  denoted by $(\cdot,\cdot)$ and its norm from $\LL^2$.
   The norm in $V$, inherited from ${\mathbb W}^{1,2}$, is denoted by $\| \cdot \|_V$.   Moreover,   let  $V'$ be the dual space of $V$ 
   with respect to the Gelfand triple,  
  $\langle\cdot,\cdot\rangle$ denotes the duality between $V'$ and $V$.   \\ 
  
  Let $b:V^3 \to \RR$ denote the trilinear map defined by 
  \[ b(u_1,u_2,u_3):=\int_D  \big(u_1(x)\cdot \nabla u_2(x)\big)\cdot u_3(x)\, dx, \]
  which by the incompressibility condition satisfies  $b(u_1,u_2, u_3)=-b(u_1,u_3,u_2)$  for $u_i \in V$, $i=1,2,3$. 
  There exists a continuous bilinear map $B:V\times V \mapsto
  V'$ such that
  \[ \langle B(u_1,u_2), u_3\rangle = b(u_1,u_2,u_3), \quad \mbox{\rm for all } \; u_i\in V, \; i=1,2,3.\]
  The map  $B$ satisfies the following antisymmetry relations:
  \begin{equation} \label{B}
  \langle B(u_1,u_2), u_3\rangle = - \langle B(u_1,u_3), u_2\rangle , \quad \langle B(u_1,u_2),  u_2\rangle = 0 \qquad \mbox {\rm for all } \quad u_i\in V.
  \end{equation}
 We will use some known estimates that we recall here for the sake of completeness. In dimension 2 the Gagliardo-Nirenberg inequality implies that for
  $X:=H\cap \LL^4(D)$ we have 
  \begin{equation} \label{interpol}
  \|u\|_X^2 \leq \bar{C} \; |u|_{\LL^2} \, |\nabla u|_{\LL^2} \leq \frac{\bar{C}}{2} \|u\|_V^2 
  \end{equation}
  for some positive constant $\bar{C}$.  

Furthermore, in dimension 2, the Sobolev embedding theorem implies the existence of  a positive constant $\sigma$ such that 
\begin{equation}	\label{Sobolev}
\|u\|_{\LL^\infty} \leq \sigma |Au|_{\LL^2}, \qquad \forall u\in {\rm Dom}\, A. 
\end{equation}   

Let $A=- \Delta$ with its domain  
  $\mbox{\rm Dom}(A)={\mathbb W}^{2,2}\cap H$. 
 Let $\{ \lambda_j\}_{j\geq 1}$ be eigenvalues of $A$ with $0<\lambda_1\leq \lambda_2<...$, and let $\{\zeta_j\}_{j\geq 1}$  be the corresponding
 eigenfunctions; thus $\zeta_j\in {\mathbb W}^{2,2}\cap H$, and we suppose that $\{\zeta_j\}_{j\geq 1}$ is an orthonormal basis of $H$.
  Note that $\lambda_j$ behaves like $|j|^2$, so that  $\lambda_j\to +\infty$ as $j\to +\infty$.

 We  assume that $W$ is a $Q$-Wiener process in $H$, where $Q$ is a symmetric bounded operator in $H$ with trace-class denoted by ${\rm Tr}\, (Q)$.
  In particular, we choose 
 \begin{equation}\label{def_W}
 W(t)=\sum_{j\geq 1} \sqrt{q_j}\,  \beta_j(t)\, \zeta_j,\;  t\geq 0, 
 \end{equation} 
 where $\{q_j\}_{j\geq 1}$ is a sequence of positive numbers such that $Q \zeta_j = q_j \zeta_j$, $j=1, 2, ...$  
 and $\{\beta_j(t)\}_{j\geq 1}$ are independent one-dimensional Brownian motions on   $(\Omega, {\mathcal F},$ $  ({\mathcal F}_t)_t,  \PP)$;
 then ${\rm Tr}\, (Q)=\sum_{j\geq 1} q_j$. 
  We set
  \begin{equation}\label{alpha0}
\tilde{\alpha}_0:= \frac{ \nu}{ {\rm Tr}\,(Q)}.
\end{equation} 
We will assume throughout the following stronger condition 
 \begin{equation}		\label{K0}
K_0:=  {\rm Tr}\, (A^{\frac{1}{2}} Q A^{\frac{1}{2}}) = \sum_{j\geq 1} \lambda_j\, q_j <\infty.
 \end{equation}  
Furthermore, as usual when dealing with the solution of \eqref{2D-NS} and its time discretization, we will project the equation on divergence-free fields. 
  
  For technical reasons, we will assume throughout that the initial condition $u_0$ takes values in $V$. Furthermore, $u_0$ is assumed to be either deterministic, or an ${\mathcal F}_0$-measurable
  (hence independent of $W$) such that 
  \begin{equation}\label{gamma0}
  \EE\Big[ \exp\big( \gamma_0 |A^{\frac{1}{2}} u_0|_{\LL^2}^2 \big)\Big] <\infty, \quad  {\rm for \ some}\  \gamma_0>0,\
  \end{equation}
and  only consider {\it strong solutions}  in the PDE sense. 
   Note that if $u_0\in V$ is deterministic, then $\EE\big[ \exp\big( \gamma_0 |A^{\frac{1}{2}} u_0|_{\LL^2}^2 \big)\big]<\infty$ for  any value of $\gamma_0>0$.

  Finally, note that the following identity involving the Stokes operator $A$ and the bilinear term holds (see e.g. \cite{Tem} Lemma 3.1):
  \begin{equation}   \label{A-B}
  \langle B(u,u), Au \rangle =0, \quad u\in \mbox{\rm Dom}(A). 
  \end{equation}
 
 We define a  strong solution of \eqref{2D-NS} as follows (see Definition 2.1 in \cite{CarPro}): 
 \begin{defn}
 We say that equation \eqref{2D-NS} has a strong  solution if:
 \begin{itemize}
 \item  $u $ is an adapted $V$-valued process,
 \item $\PP$ a.s. we have $u(.,.,\omega)\in C([0,T];V) \cap L^2(0,T; \mbox{\rm Dom}(A))$,
 \item  $\PP\;  \mbox{\rm a.s.}$
  \begin{align*}
  \big(u(t), \phi\big) +& \nu \int_0^t \big( A^{\frac{1}{2}} u(s), A^{\frac{1}{2}} \phi\big) ds + \int_0^t \big\langle [u(s) \cdot \nabla]u(s), \phi\big\rangle ds 
=  \big( u_0, \phi) +\big( W(t), \phi \big) 
  \end{align*}
for every $t\in [0,T]$ and every $\phi \in V$.
  \end{itemize}
 \end{defn}
 
 As usual, by projecting  \eqref{2D-NS} on divergence free fields, the pressure term is implicitly in the space $V$ and can be recovered afterwards. 
 Proposition 2.2 in \cite{BesMil} (see also \cite{BeBrMi}, Theorem 4.1) shows the following:
  \begin{theorem} \label{strong_wp}
  Assume  condition \eqref{K0}, and that $u_0$ is a $V$-valued, ${\mathcal F}_0$-measurable  random variable such that \eqref{gamma0} is satisfied.
  Then  there exists a unique    solution $u$ to equation \eqref{2D-NS}. 
  Furthermore, for every $p\in [2,+\infty)$, we have for some positive constant  $C$
  \begin{equation}   \label{bound_u}
  \EE\Big( \sup_{t\in [0,T]} \|u(t)\|_V^{p} + \int_0^T |Au(s)|_{\LL^2}^2 \big( 1+\|u(s)\|_V^{p-2}\big) ds \Big) \leq C\big[ 1+ \EE (\|u_0\|_V^p) \big].
  \end{equation}
  \end{theorem}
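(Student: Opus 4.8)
The plan is to construct the solution by Galerkin approximation whose a priori estimates close \emph{thanks to} the orthogonality relation \eqref{A-B}, thereby producing the bound \eqref{bound_u}, and to obtain uniqueness (and, if one wishes, existence) by exploiting the additive structure of the noise via the shift $u=v+z$. Let $P_n$ denote the orthogonal projection of $H$ onto $\mathrm{span}\{\zeta_1,\dots,\zeta_n\}$ and consider the finite-dimensional system
\[ d u_n + \nu A u_n\, dt + P_n B(u_n,u_n)\, dt = P_n\, dW, \qquad u_n(0)=P_n u_0. \]
Since $B$ is bilinear and continuous on $V$ and $P_n$ commutes with $A$, the coefficients are locally Lipschitz with at most linear growth, so this SDE has a unique maximal strong solution; the bounds below make it global and uniform in $n$.

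The crucial step is the estimate in $V$. Applying It\^o's formula to $\|u_n\|_V^2=|A^{\frac12}u_n|_{\LL^2}^2=(Au_n,u_n)$, one observes that $Au_n$ lies in the span of $\{\zeta_j\}_{j\le n}$, so the nonlinear contribution equals $(Au_n,P_nB(u_n,u_n))=\langle B(u_n,u_n),Au_n\rangle=0$ by \eqref{A-B}. The quadratic term therefore disappears entirely, and
\[ \|u_n(t)\|_V^2 + 2\nu\int_0^t |Au_n(s)|_{\LL^2}^2\, ds = \|P_n u_0\|_V^2 + t\sum_{j=1}^n \lambda_j q_j + 2\int_0^t (Au_n(s),\, dW(s)), \]
where the It\^o correction $\sum_{j\le n}\lambda_j q_j\le K_0$ is finite by \eqref{K0}. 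Taking expectations kills the martingale and yields the case $p=2$, the dissipation staying on the left with full coefficient. For general $p$ one applies It\^o to $(\|u_n\|_V^2)^{p/2}$: again no nonlinear term survives, the left-hand side retains $\nu p\int_0^t|Au_n|_{\LL^2}^2\|u_n\|_V^{p-2}\,ds$, the second-order correction is bounded by $C_pK_0\|u_n\|_V^{p-2}$ (using $\sum_j q_j\lambda_j^2 a_j^2\le K_0\|u_n\|_V^2$ with $a_j=(u_n,\zeta_j)$), and the martingale is controlled by the Burkholder-Davis-Gundy and Young inequalities, absorbing a term $\varepsilon\sup_{s\le t}\|u_n\|_V^p$ into the left. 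A Gronwall argument then gives \eqref{bound_u} with a constant independent of $n$; note $\EE\|u_0\|_V^p<\infty$ follows from \eqref{gamma0}.

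To pass to the limit and get uniqueness, I would use the additive structure. Let $z$ solve the linear stochastic Stokes equation $dz+\nu Az\,dt=dW$, $z(0)=0$; by \eqref{K0} one has $z\in C([0,T];V)\cap L^2(0,T;{\rm Dom}\,A)$ almost surely. Then $v=u-z$ must satisfy, for a.e.\ $\omega$, the \emph{deterministic} 2D Navier-Stokes system
\[ \partial_t v + \nu A v + B(v+z,\, v+z) = 0, \qquad v(0)=u_0, \]
for which classical theory provides a unique solution $v\in C([0,T];V)\cap L^2(0,T;{\rm Dom}\,A)$. This furnishes pathwise uniqueness for \eqref{2D-NS}; combined with the tightness of the Galerkin laws (from the uniform bounds together with a fractional time-regularity estimate) and a Gy\"{o}ngy-Krylov argument, it upgrades the Galerkin limit to a probabilistically strong solution, while weak lower semicontinuity transfers \eqref{bound_u} to $u$.

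The main obstacle is the passage to the limit in $P_nB(u_n,u_n)$, which is not continuous under mere weak convergence: one needs strong convergence of $u_n$ in $L^2(0,T;H)$, obtained from an Aubin-Lions-Simon compactness argument resting on a uniform-in-$n$ bound on the time increments of $u_n$. The $z$-shift sidesteps part of this by reducing uniqueness (and even existence, if one argues purely pathwise) to the well-understood deterministic 2D theory, so the genuine work lies in closing the $p$-th moment Gronwall estimate uniformly in $n$ and in establishing the time-regularity bound that powers the compactness.
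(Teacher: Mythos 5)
The paper does not actually prove Theorem \ref{strong_wp}: it quotes the result from Proposition 2.2 of \cite{BesMil} (see also Theorem 4.1 of \cite{BeBrMi}), so your attempt can only be compared with the route taken in those references, which treat settings allowing multiplicative noise. Your outline is correct and follows the standard skeleton: spectral Galerkin approximation, It\^o's formula for $|A^{\frac12}u_n|_{\LL^2}^2$ and for its $p/2$-th power, with the nonlinearity annihilated by the periodic-torus identity \eqref{A-B} --- this is precisely the structural fact that lets the $V$-estimate \eqref{bound_u} close without any localization of the quadratic term, and your bookkeeping is sound: the It\^o correction is controlled because $q_j\lambda_j\le K_0$ gives $\sum_j q_j\lambda_j^2(u_n,\zeta_j)^2\le K_0\|u_n\|_V^2$, and it is condition \eqref{K0}, not merely ${\rm Tr}(Q)<\infty$, that makes both this term and the quadratic variation of the martingale $\int_0^t(Au_n,dW)$ finite. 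Where you genuinely diverge from the cited references is in the uniqueness/limit step: since those results must accommodate multiplicative noise, no Ornstein--Uhlenbeck shift is available there, and uniqueness comes from a direct pathwise Gronwall estimate on the difference of two solutions (the noise also cancels in that difference when it is additive, so even without the shift this is elementary here). Your reduction $u=v+z$ to the deterministic 2D system with the regular forcing $z\in C([0,T];V)\cap L^2(0,T;{\rm Dom}\,A)$ is legitimate and, in this purely additive setting, arguably simpler: it can be used to build the solution pathwise, $\omega$ by $\omega$, which bypasses the Aubin--Lions/Gy\"ongy--Krylov machinery you invoke (adaptedness then follows from uniqueness and measurable dependence on $z$, and \eqref{bound_u} transfers by lower semicontinuity exactly as you say). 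The items you defer --- stopping-time localization before taking expectations, the time-increment bound powering compactness, continuity in $V$ of the limit --- are standard, and I see no step in your plan that would fail.
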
 
   
   Furthermore, the fact that the stochastic perturbation is additive  
   implies that the solution $u$ has exponential moments.
   The first result is stated for a deterministic initial condition $u_0\in V$ and the second one for a random one with exponential moments (see
   Theorems 4.2 and 4.3 in \cite{BeMi-FE}, and \cite{HaiMat} for a similar result in a vorticity formulation).

 \begin{theorem} 		\label{th_mom_exp_u} 
  
(i)  
Let $u_0\in V$; 
   then  for $0<\alpha \leq  \tilde{\alpha}_0$, there exists a positive constant $C(\alpha)$ such that 
\begin{equation} 	\label{exp-moments-udet}
\EE\Big[ \exp\Big( \alpha  \sup_{t\in [0,T]} | A^{\frac{1}{2}} u(t)|_{\LL^2}^2   \Big) \Big]   = C(\alpha)<\infty. 
\end{equation} 
\indent (ii) Let $u_0$ be $V$-valued,  ${\mathcal F}_0$ measurable with \eqref{gamma0}. 
Then  for $0<\alpha \leq \tilde{\alpha}_0 \frac{\gamma_0 }{\gamma_0+ \tilde{\alpha_0}}$, there exists a positive constant $C(\alpha)$ such that 
\eqref{exp-moments-udet} holds.
\end{theorem}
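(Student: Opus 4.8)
The plan is to derive a pathwise enstrophy balance in which the nonlinearity disappears, and then recast it as a statement about an exponential supermartingale whose time–supremum can be controlled.

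First I would apply Itô's formula to $t\mapsto |A^{\frac12}u(t)|_{\LL^2}^2$ for the projected equation $du+\nu Au\,dt+B(u,u)\,dt=dW$. Since the second derivative of $u\mapsto|A^{\frac12}u|_{\LL^2}^2$ in the direction $h$ equals $2|A^{\frac12}h|_{\LL^2}^2$, the Itô correction is $\sum_j q_j\lambda_j=K_0$, finite by \eqref{K0}; and the crucial cancellation \eqref{A-B}, $\langle B(u,u),Au\rangle=0$, removes the quadratic term entirely. This yields, for a.e.\ $\omega$ and all $t\in[0,T]$,
\[
|A^{\tfrac12}u(t)|_{\LL^2}^2+2\nu\int_0^t|Au(s)|_{\LL^2}^2\,ds=|A^{\tfrac12}u_0|_{\LL^2}^2+K_0\,t+2N(t),\quad N(t):=\int_0^t\big(Au(s),dW(s)\big).
\]
Here $N$ is a continuous local martingale with $\langle N\rangle_t=\int_0^t\sum_j q_j(Au,\zeta_j)_{\LL^2}^2\,ds\le {\rm Tr}\,(Q)\int_0^t|Au|_{\LL^2}^2\,ds$, using Parseval and $\sup_j q_j\le{\rm Tr}\,(Q)$. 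Since $u$ lies only in $L^2(0,T;{\rm Dom}\,A)$ a.s., I would first run this computation on the Galerkin projections and pass to the limit.

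Next I would introduce the exponential supermartingale. For $\beta>0$ set $M_t:=\exp\big(\beta N(t)-\tfrac{\beta^2}{2}\langle N\rangle_t\big)$, a nonnegative continuous local martingale with $\EE[M_t]\le1$. Multiplying the balance by $\alpha$, substituting $2\alpha N(t)=\tfrac{2\alpha}{\beta}\log M_t+\alpha\beta\langle N\rangle_t$ and bounding $\langle N\rangle$ as above gives
\[
\alpha|A^{\tfrac12}u(t)|_{\LL^2}^2\le\alpha|A^{\tfrac12}u_0|_{\LL^2}^2+\alpha K_0T+\tfrac{2\alpha}{\beta}\log M_t+\alpha\big(\beta\,{\rm Tr}\,(Q)-2\nu\big)\int_0^t|Au|_{\LL^2}^2\,ds.
\]
Choosing $\beta=2\tilde\alpha_0=2\nu/{\rm Tr}\,(Q)$ makes the last bracket vanish, so the dissipation exactly absorbs the quadratic–variation correction, leaving with $r:=\alpha/\tilde\alpha_0$ the clean pathwise bound $\alpha\sup_{t\le T}|A^{\frac12}u(t)|_{\LL^2}^2\le\alpha|A^{\frac12}u_0|_{\LL^2}^2+\alpha K_0T+r\sup_{t\le T}\log M_t$.

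For part (i), $u_0\in V$ is deterministic, so exponentiating and taking expectations reduces everything to $\EE\big[(\sup_{t\le T}M_t)^{r}\big]$. Since $M$ is a nonnegative supermartingale, the weak–type maximal inequality $\PP(\sup_{t\le T}M_t>\lambda)\le1/\lambda$ gives $\EE[(\sup_tM_t)^r]\le1+r/(1-r)<\infty$ as soon as $r<1$, i.e.\ $\alpha<\tilde\alpha_0$, proving \eqref{exp-moments-udet}; the endpoint $\alpha=\tilde\alpha_0$ I would recover by letting $\alpha\uparrow\tilde\alpha_0$ with monotone convergence. For part (ii) the same pathwise bound holds, but $|A^{\frac12}u_0|_{\LL^2}^2$ is now random and correlated with $M$, so I would use Hölder with conjugate $(p,p')$,
\[
\EE\Big[e^{\alpha|A^{\frac12}u_0|_{\LL^2}^2}(\sup_tM_t)^r\Big]\le\EE\big[e^{p\alpha|A^{\frac12}u_0|_{\LL^2}^2}\big]^{1/p}\;\EE\big[(\sup_tM_t)^{rp'}\big]^{1/p'},
\]
the first factor being finite when $p\alpha\le\gamma_0$ by \eqref{gamma0} and the second when $rp'<1$. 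Writing $r=\alpha/\tilde\alpha_0$, these constraints are simultaneously solvable precisely when $\tfrac{\gamma_0}{\gamma_0-\alpha}\le p'<\tfrac{\tilde\alpha_0}{\alpha}$, a nonempty range iff $\alpha(\gamma_0+\tilde\alpha_0)<\tilde\alpha_0\gamma_0$, i.e.\ $\alpha<\tilde\alpha_0\frac{\gamma_0}{\gamma_0+\tilde\alpha_0}$, matching the stated threshold.

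The main obstacle is exactly the control of the time–supremum of the exponential supermartingale $M$: the argument only closes strictly below the critical exponent, and the entire threshold in the statement arises from balancing the subcriticality condition ($r<1$, resp.\ $rp'<1$) against the sign condition $\beta\le2\tilde\alpha_0$ forced by the dissipation. A secondary technical point is the rigorous justification of the enstrophy balance and the local–martingale manipulations by Galerkin approximation, since $Au$ is only square–integrable in time.
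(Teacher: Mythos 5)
The first thing to note is that this paper never actually proves Theorem \ref{th_mom_exp_u}: it is imported from Theorems 4.2 and 4.3 of \cite{BeMi-FE} (with \cite{HaiMat} cited for a vorticity analogue), so the only proof inside the paper against which your proposal can be compared is that of the discrete counterpart, Theorem \ref{exp-mom}, in Section \ref{s-exp-mom-Euler}. Your proposal is precisely the continuous-time version of that argument, and its core is correct: the enstrophy identity from It\^o's formula (to be justified via Galerkin approximations, as you acknowledge), with the nonlinearity killed by the cancellation \eqref{A-B} and It\^o correction ${\rm Tr}\,(A^{\frac{1}{2}}QA^{\frac{1}{2}})=K_0$; the exponential local martingale $M_t=\exp\big(\beta N(t)-\frac{\beta^2}{2}\langle N\rangle_t\big)$; absorption of $\langle N\rangle_t\leq {\rm Tr}\,(Q)\int_0^t|Au(s)|_{\LL^2}^2\,ds$ by the dissipation at $\beta=2\tilde{\alpha}_0$; the weak-type maximal inequality for nonnegative supermartingales in case (i); and the H\"older bookkeeping in case (ii), which produces the threshold $\tilde{\alpha}_0\gamma_0/(\gamma_0+\tilde{\alpha}_0)$. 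This mirrors, step by step, what the paper does discretely (test \eqref{full-imp1Bis} with $A\bu^l$, use \eqref{A-B}, split the noise into a Gaussian term and the martingale $M_n$, absorb $\langle \tilde{M}\rangle$ into $\nu\frac{T}{N}\sum_l|A\bu^l|_{\LL^2}^2$, then H\"older with exponents $p_1,\mu,p_3$ whose constraints yield $\tilde{\beta}_0$).

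The genuine gap is at the endpoint. The theorem is stated on the \emph{closed} ranges $0<\alpha\leq\tilde{\alpha}_0$ and $0<\alpha\leq\tilde{\alpha}_0\gamma_0/(\gamma_0+\tilde{\alpha}_0)$, whereas your argument reaches only the open ones, and the repair you propose — letting $\alpha\uparrow\tilde{\alpha}_0$ and invoking monotone convergence — fails: monotone convergence identifies $\EE\big[\exp\big(\tilde{\alpha}_0\sup_{t\leq T}|A^{\frac{1}{2}}u(t)|_{\LL^2}^2\big)\big]$ as the increasing limit of the finite quantities for $\alpha<\tilde{\alpha}_0$, but your bounds on those quantities, proportional to $1+\frac{r}{1-r}$ with $r=\alpha/\tilde{\alpha}_0$, diverge as $r\uparrow 1$, so the limit may perfectly well be $+\infty$; no finiteness is obtained. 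The obstruction is intrinsic: the supermartingale bound $\PP(\sup_{t\leq T}M_t>\lambda)\leq 1/\lambda$ is exactly the non-integrable borderline, so $\EE[(\sup_t M_t)^r]$ is controlled only for $r<1$. The same issue resurfaces in (ii), where your window $\frac{\gamma_0}{\gamma_0-\alpha}\leq p'<\frac{\tilde{\alpha}_0}{\alpha}$ is nonempty only under strict inequality, so it does not quite "match the stated threshold" as claimed. A natural partial repair is available in your own computation: you bound the quadratic variation by $\sup_j q_j\int_0^t|Au|_{\LL^2}^2\,ds$ before relaxing $\sup_j q_j$ to ${\rm Tr}\,(Q)$; keeping $\sup_j q_j$ gives the strictly better threshold $\nu/\sup_j q_j>\tilde{\alpha}_0$ whenever $Q$ is not of rank one, which then covers the stated endpoint (the rank-one case remains open by this method). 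This gap is harmless for the rest of the paper — the discrete analogue Theorem \ref{exp-mom} is stated with strict inequalities, and every application of \eqref{exp-moments-udet} (in the proofs of Theorems \ref{Loc_cv_Euler}, \ref{E-E1}, \ref{E-E2} and their random-initial-data variants) uses exponents strictly below the threshold — but as a proof of Theorem \ref{th_mom_exp_u} exactly as stated, your argument is incomplete at the endpoint.
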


 The following result provides moment estimates of the $\LL^2$-norm  of time increments of $u$. Note that the fact that the initial condition belongs to
 $V$ implies that the constant $C$ below does not depend on the smallest time parameter (which can be 0).  The proof is given in Section \ref{s-proofs-time}
 for the sake of completeness. 
  \begin{lemma}\label{Holder-L2}
 Assume  that \eqref{gamma0} is satisfied.
  Then  given any $ \eta \in (0,1]$ and $q\geq 1$, there exists a constant $C$  
 such that for all  $s,t\in [0,T]$ 
\begin{equation}
 \EE\big( |u(t)-u(s)|_{\LL^2}^{2q}\big) \leq C\; \big( 1+\EE(\|u_0\|_V^{4q}\big) \, |t-s|^{\eta q} .  
  \label{holder-L2}
 \end{equation} 
  \end{lemma}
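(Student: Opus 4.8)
The plan is to start from the integral form of the equation projected onto divergence-free fields. Testing the strong-solution identity against the basis and using that $u(r)\in\mathrm{Dom}(A)$ for a.e.\ $r$, one has, as an equality in $H$ for $0\le s\le t\le T$,
\[
u(t)-u(s) = -\nu\int_s^t Au(r)\,dr - \int_s^t B\big(u(r),u(r)\big)\,dr + \big(W(t)-W(s)\big).
\]
I would then raise the $\LL^2$-norm to the power $2q$, split into the three contributions, and estimate each one. Since $\eta\le 1$ and $|t-s|\le T$, it is enough to bound each term by $C(1+\EE\|u_0\|_V^{4q})\,|t-s|^{q}$, because $|t-s|^{q}\le T^{(1-\eta)q}\,|t-s|^{\eta q}$; the finiteness of $\EE\|u_0\|_V^{4q}$ is guaranteed by \eqref{gamma0}.

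The essential preliminary estimate, on which the viscous and bilinear terms rest, is the moment bound
\[
\EE\Big[\Big(\int_0^T |Au(r)|_{\LL^2}^2\,dr\Big)^{m}\Big] \le C_m\,\big(1+\EE\|u_0\|_V^{2m}\big), \qquad m\ge 1,
\]
which does \emph{not} follow directly from \eqref{bound_u} (that inequality only controls the first moment of a weighted $|Au|^2$-integral). I would derive it from It\^o's formula applied to $|A^{1/2}u(r)|_{\LL^2}^2$. The decisive point is that the drift contribution of the nonlinearity vanishes thanks to $\langle B(u,u),Au\rangle=0$ in \eqref{A-B}, so that
\[
2\nu\int_0^t|Au(r)|_{\LL^2}^2\,dr = |A^{1/2}u_0|_{\LL^2}^2 - |A^{1/2}u(t)|_{\LL^2}^2 + K_0\,t + 2\int_0^t\big(A^{1/2}u(r),A^{1/2}dW(r)\big).
\]
Raising to the power $m$, taking expectations, and applying the Burkholder--Davis--Gundy inequality to the stochastic integral, whose quadratic-variation density is controlled by $K_0\,|A^{1/2}u(r)|_{\LL^2}^2$ since $\sup_j \lambda_j q_j \le K_0$ by \eqref{K0}, the right-hand side is dominated by moments of $\sup_r\|u(r)\|_V$, which are finite by \eqref{bound_u}; this gives the displayed bound.

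With this in hand the three terms are routine. For the viscous term, Cauchy--Schwarz in time yields $\big|\int_s^t Au\,dr\big|_{\LL^2}^{2q}\le |t-s|^{q}\big(\int_0^T|Au|_{\LL^2}^2\,dr\big)^{q}$, whose expectation is $\le C(1+\EE\|u_0\|_V^{2q})\,|t-s|^q$ by the case $m=q$. For the bilinear term I would use the Sobolev bound $|B(u,u)|_{\LL^2}\le \sigma\,|Au|_{\LL^2}\,\|u\|_V$ coming from \eqref{Sobolev}, obtaining $\big|\int_s^t B(u,u)\,dr\big|_{\LL^2}^{2q}\le \sigma^{2q}\big(\sup_r\|u\|_V\big)^{2q}|t-s|^{q}\big(\int_0^T|Au|_{\LL^2}^2\,dr\big)^{q}$; a Cauchy--Schwarz in $\omega$ together with $\EE\sup_r\|u\|_V^{4q}$ from \eqref{bound_u} and the case $m=2q$ of the preliminary bound produces the factor $1+\EE\|u_0\|_V^{4q}$, which is precisely the power in the statement. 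Finally, $W(t)-W(s)$ is a centred $H$-valued Gaussian with covariance $(t-s)Q$, so $\EE|W(t)-W(s)|_{\LL^2}^{2q}\le C_q\,({\rm Tr}\,Q)^{q}\,|t-s|^{q}$. Summing the three contributions and absorbing the powers of $T$ gives \eqref{holder-L2}.

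The main obstacle is the preliminary moment bound on $\big(\int_0^T|Au|_{\LL^2}^2\big)^{m}$: its proof genuinely relies on the cancellation \eqref{A-B} and on a careful control of the quadratic variation of the $A^{1/2}$-level stochastic integral via $K_0$. Everything else is Hölder-in-time and Cauchy--Schwarz-in-$\omega$ bookkeeping, the only care being to split the product moments so that no term requires more than the $4q$-th moment of $\|u_0\|_V$.
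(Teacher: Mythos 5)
Your proof is correct, but it follows a genuinely different route from the paper. The paper works with the \emph{mild} formulation: it splits $u(t)-u(s)$ into semigroup increments $S(t)u_0-S(s)u_0$, Duhamel integrals of $B(u,u)$, and increments of the stochastic convolution, and then estimates each piece using the analytic-semigroup smoothing bounds $\|A^bS(t)\|_{{\mathcal L}(\LL^2,\LL^2)}\le Ct^{-b}$, $\|A^{-b}({\rm Id}-S(t))\|_{{\mathcal L}(\LL^2,\LL^2)}\le Ct^{b}$, the bilinear estimate $|A^{-\delta}B(u,v)|_{\LL^2}\le C\|u\|_V\|v\|_V$, and Burkholder--Davis--Gundy for the stochastic terms; this only requires moments of $\sup_t\|u(t)\|_V$ from \eqref{bound_u}. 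You instead work with the \emph{strong} (Bochner-integral) formulation and bound $\int_s^t Au\,dr$ and $\int_s^t B(u,u)\,dr$ by Cauchy--Schwarz in time, which forces you to supply an ingredient the paper never needs for this lemma: high moments of $\int_0^T|Au(r)|_{\LL^2}^2\,dr$. You correctly identify that this does not follow from \eqref{bound_u}, and your derivation of it is sound: the It\^o/energy identity at the $V$-level (justified by Galerkin regularization, as in the references the paper cites for \eqref{bound_u} and Theorem \ref{th_mom_exp_u}) uses the cancellation \eqref{A-B}, and your control of the quadratic variation by $K_0|A^{\frac12}u|_{\LL^2}^2$ via $\sup_j\lambda_jq_j\le K_0$ is valid; note this makes your argument rely on the standing assumption \eqref{K0}, but so does the paper's proof (through the stochastic convolution estimates). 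Your bookkeeping of moments is also right: the bilinear term is the one that forces the $4q$-th moment of $\|u_0\|_V$, exactly as in the statement. What each approach buys: yours is more elementary (no fractional-power semigroup calculus, no estimate \eqref{A.B}) and produces a moment bound of independent interest, but it leans on the 2D-periodic cancellation \eqref{A-B} and on an It\^o formula needing regularization; the paper's semigroup decomposition avoids any $A$-level energy identity and, more importantly, is reused verbatim in the proof of Lemma \ref{holder-q} for $V$-norm increments, where the smoothing estimates are essential and your Cauchy--Schwarz-in-time device would not suffice.
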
 

Fix an integer $N\geq 1$ and for $j=0, 1, ..., N$, set $t_j:={j} \frac{T}{N}$.
Before describing the implicit Euler scheme based on the time grid $\{t_j\}_j$, we prove  the following moment estimates
 for time increments of the solution in the $V$ and $L^4$ norms. 
 Coupled with Lemma \ref{Holder-L2}, this will be crucial to deduce the  speed of convergence
 of the time Euler scheme. 
 It slightly differs from Lemma 2.3 in \cite{CarPro}. Indeed, in order to obtain an upper estimate which does not involve
 negative powers of the lower bound of a  time interval such as $(s,t_j)$, we integrate the time increment  $u(t_j)-u(s)$ on the interval $(t_{j-1},t_j)$ of the time grid. 
 Furthermore, we do not want to  have restrictions on the moments of the corresponding sums over $j$. The proof is given in Section \ref{s-proofs-time}.
 
 \begin{lemma}\label{holder-q}
 Assume  \eqref{K0} and  \eqref{gamma0}.
 Then given   $\eta \in (0,1)$,   for every $q\in [1,+\infty)$   
  for some constant ${C}$  
\begin{align}
& \EE\Bigg( \Bigg| \sum_{j=1}^N \! \int_{t_{j-1}}^{t_j} \!\! \big[ \|u(s)-u(t_{j-1})\|_V^2 + \|u(s)-u(t_j)\|^2_V \big] ds \Bigg|^q\Bigg) \leq 
 {C}  \Big( \frac{T}{N}\Big)^{{\eta} q},  \label{holder-V_q}\\
& \EE\Bigg( \Bigg| \sum_{j=1}^N \! \int_{t_{j-1}}^{t_j} \!\! \big[ |u(s)-u(t_{j-1})|_{\LL^4}^2 + |u(s)-u(t_j)|_{\LL^4}^2 \big] ds \Bigg|^q\Bigg) \leq 
 {C} \Big( \frac{T}{N}\Big)^{{\eta} q}. \label{holder-L4_q}
 \end{align}
 \end{lemma}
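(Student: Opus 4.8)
The plan is to reduce the $\LL^4$ bound \eqref{holder-L4_q} to the $V$ bound \eqref{holder-V_q}, and to obtain the latter from the Duhamel representation of the solution. For the first reduction, the two–dimensional Gagliardo–Nirenberg inequality \eqref{interpol}, applied to $w=u(s)-u(t_{j-1})$ and to $w=u(s)-u(t_j)$ (both in $X=H\cap\LL^4$ since $V\hookrightarrow\LL^4$), gives $|w|_{\LL^4}^2\le\tfrac{\bar C}{2}\|w\|_V^2$; hence the integrand of \eqref{holder-L4_q} is pointwise at most $\tfrac{\bar C}{2}$ times that of \eqref{holder-V_q}, and \eqref{holder-L4_q} follows from \eqref{holder-V_q} at the cost of a factor $(\bar C/2)^q$. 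From now on write $\underline s=t_{j-1}$ for $s\in(t_{j-1},t_j]$ and $S=T/N$, so that $\sum_j\int_{t_{j-1}}^{t_j}f(s)ds=\int_0^T f(s)ds$, and use that the strong solution satisfies
\[
u(t)=e^{-\nu t A}u_0-\int_0^t e^{-\nu(t-r)A}B(u(r),u(r))\,dr+\int_0^t e^{-\nu(t-r)A}dW(r).
\]
I would estimate the three contributions to $u(s)-u(\underline s)$ separately, the guiding principle being to produce a pathwise bound of the form $C\,S^{\eta}\,Y$ with $Y$ having finite moments of every order, and then apply H\"older's inequality in $\omega$.

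For the linear contribution $(e^{-\nu s A}-e^{-\nu\underline s A})u_0$, a spectral computation based on $\|(e^{-\delta A}-I)A^{-\theta}\|\le C\delta^\theta$ yields the pathwise estimate $\int_0^T\|(e^{-\nu s A}-e^{-\nu\underline s A})u_0\|_V^2\,ds\le C\,S\,\|u_0\|_V^2$; here it is essential to \emph{integrate over the whole subinterval} $(t_{j-1},t_j)$, since the geometric sum $\sum_j e^{-2\nu\lambda_k t_{j-1}}$ is then summable and turns the apparent short–time singularity into a clean factor $S$. Raising to the power $q$ and using the finiteness of $\EE\|u_0\|_V^{2q}$ (a consequence of \eqref{gamma0}) gives a bound $C\,S^q\le C\,S^{\eta q}$. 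For the stochastic convolution $W_A(t)=\int_0^t e^{-\nu(t-r)A}dW(r)$, splitting the increment into its diagonal part $\int_{\underline s}^s$ and its memory part $\int_0^{\underline s}$ and using $K_0={\rm Tr}(A^{\frac12}QA^{\frac12})<\infty$ from \eqref{K0} together with the analytic–semigroup bounds gives $\EE\int_0^T\|W_A(s)-W_A(\underline s)\|_V^2\,ds\le C\,S$; as this quantity lies in the second Wiener chaos, all its moments are controlled by powers of its mean, so its $q$–th moment is $\le C\,S^q\le C\,S^{\eta q}$.

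The nonlinear term is the main obstacle, and it is precisely what forces $\eta<1$. With $f(r)=B(u(r),u(r))$ and $v(t)=\int_0^t e^{-\nu(t-r)A}f(r)dr$, I again split $v(s)-v(\underline s)$ into a diagonal and a memory part. Using $\|A^{\frac12}e^{-\nu\tau A}\|\le C\tau^{-1/2}$ and Fubini, the diagonal part contributes $\le C\,S\int_0^T|f(r)|_{\LL^2}^2dr$, while the memory part requires the sharper smoothing bound $\|A^{\frac12}e^{-\nu(\underline s-r)A}(e^{-\nu(s-\underline s)A}-I)A^{-\theta}\|\le C(\underline s-r)^{-(1/2+\theta)}(s-\underline s)^\theta$ for $\theta\in(0,\tfrac12)$, followed by a careful convolution (fractional–integral) estimate; this produces $\le C\,S^{2\theta}\int_0^T|f(r)|_{\LL^2}^2dr$, and taking $\eta=2\theta$ accounts for every exponent $\eta<1$. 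The hard point, consistent with the paper's remark, is to carry out the memory estimate without ever freezing $|Au|$ at a single time $t_j$ (which is not integrable near $t=0$): one keeps the Stokes smoothing acting on the full increment and integrates in $s$ over $(t_{j-1},t_j)$. Finally I would control $\int_0^T|f|_{\LL^2}^2dr$ through the Sobolev bound \eqref{Sobolev}, namely $|B(u,u)|_{\LL^2}\le\|u\|_{\LL^\infty}|A^{\frac12}u|_{\LL^2}\le\sigma|Au|_{\LL^2}\|u\|_V$, so that $\int_0^T|f|_{\LL^2}^2dr\le\sigma^2\big(\sup_{t}\|u(t)\|_V^2\big)\int_0^T|Au|_{\LL^2}^2dr$. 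Assembling the three pieces, raising to the power $q$ and applying H\"older in $\omega$, the estimate reduces to the finiteness of all moments of $\sup_t\|u(t)\|_V^2$ and of $\int_0^T|Au|_{\LL^2}^2dr$, which is guaranteed by Theorem \ref{strong_wp} and the exponential moments of Theorem \ref{th_mom_exp_u}; this yields \eqref{holder-V_q}, and hence \eqref{holder-L4_q}.
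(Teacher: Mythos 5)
Your proposal follows, in essence, the same route as the paper: both start from the mild (Duhamel) formulation, decompose the increment into a linear part, a nonlinear convolution and a stochastic convolution, split the two convolutions into a diagonal and a memory piece, use the analytic-semigroup smoothing bounds \eqref{AS}--\eqref{A(I-S)}, exploit the integration over the subintervals (and the summation over $j$) to neutralize the short-time singularities, and close the estimate with H\"older's inequality in $\omega$ using moments of the solution. The differences are technical but worth recording. For the linear term you run a spectral/geometric-sum computation giving a clean factor $T/N$, where the paper keeps the singular weight $s^{-2\alpha}$ and gets $(T/N)^{2\alpha}$ with $\alpha<\tfrac12$; both work, and both hinge on integrating over the whole subinterval. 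For the stochastic convolution you compute the mean via the trace bound involving $K_0$ and then upgrade to $q$-th moments by equivalence of moments on the second Wiener chaos, whereas the paper applies H\"older and Burkholder--Davis--Gundy directly to the $2q$-th moment; your chaos argument is legitimate, since the integrated squared $V$-norm of the Gaussian increment is a quadratic functional of $W$, and it even yields $(T/N)^{q}$ instead of the paper's $(T/N)^{(1-2\epsilon)q}$ for the memory piece. In both treatments the restriction $\eta<1$ comes only from the memory part of the nonlinear convolution, exactly as you say. Your pointwise reduction $|w|_{\LL^4}^2\le\tfrac{\bar C}{2}\|w\|_V^2$ from \eqref{interpol} also gives \eqref{holder-L4_q} somewhat more directly than the paper's interpolation between Lemma \ref{Holder-L2} and \eqref{holder-V_q}.

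The one step you must repair is the control of the nonlinear forcing. You bound $|B(u,u)|_{\LL^2}\le\sigma\,|Au|_{\LL^2}\,\|u\|_V$ via \eqref{Sobolev}, so your final H\"older step requires $\EE\big[\big(\int_0^T|Au(r)|_{\LL^2}^2\,dr\big)^{q}\big]<\infty$ for arbitrarily large $q$. This is not ``guaranteed by Theorem \ref{strong_wp} and Theorem \ref{th_mom_exp_u}'' as you claim: the bound \eqref{bound_u} controls only the first moment of $\int_0^T|Au|_{\LL^2}^2\,dr$ (the exponent $p$ there sits on the weight $\|u\|_V^{p-2}$ inside the integral, not on a power of the integral itself), and \eqref{exp-moments-udet} as stated concerns only $\sup_{t}|A^{\frac12}u(t)|_{\LL^2}^2$. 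The fact you need is true --- it follows from the exponential moments of $\sup_{t}\big(|A^{\frac12}u(t)|_{\LL^2}^2+\int_0^t|Au(s)|_{\LL^2}^2\,ds\big)$ established in \cite{BeMi-FE} and invoked in the introduction --- but it must be proved or cited as such. The cleaner fix, and the one the paper uses, is to avoid $|Au|$ altogether: estimate the nonlinearity through \eqref{A.B}, $|A^{-\delta}B(u,u)|_{\LL^2}\le C\,|A^{\frac12}u|_{\LL^2}^2$, transferring $A^{\delta}$ onto the semigroup at the cost of the slightly stronger, still integrable, singularity $(s-r)^{-\frac12-\delta-\beta}$ with $\delta,\beta$ small. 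Then the whole nonlinear contribution is controlled pathwise by $\sup_{t}\|u(t)\|_V^4$, whose moments of every order are supplied by \eqref{bound_u}, and your argument closes as written.
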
 
 
 \section{ Time discretization scheme}	 \label{sec_Euler}

In this section, we  study  the fully implicit time Euler scheme  of the stochastic 2D Navier-Stokes equations introduced by 
E.~Carelli and A.~Prohl in \cite{CarPro}, and recall the strong convergence proved in \cite{Be-Mi_time}. 
 Fix $N\geq 1$, let $k=\frac{T}{N}$ denote the constant time mesh,  and let  $t_l=l \frac{T}{N}$, $l=0, \cdots, N,$ denote the time grid. 
 
\subsection{The fully implicit time Euler scheme}
{\it Let $u_0$ be a $V$-valued,  ${\mathcal F}_0$-measurable  
random variable satisfying \eqref{gamma0} and  assume that \eqref{K0} holds true;  set $\bu^0=u_0$. 
Fix $N\geq 1$ and for $l=1, \cdots, N,$ find pairs 
 $\big( \bu^l, \pi^l \big) \in  V\times L^2_{per}  $ such that $\PP$  a.s. for  all  
 $\phi \in \WW^{1,2}$ and $\psi \in L^2_{per}$, 
\begin{align}    \label{full-imp1}
\big( \bu^l - \bu^{l-1} , \phi \big) +  \frac{T}{N} \Big[ \nu \big( A^{\frac{1}{2}} \bu^l , A^{\frac{1}{2}} \phi\big) &+ \big\langle (\bu^l\cdot\nabla \bu^l), 
\phi\big\rangle \Big] - \frac{T}{N} \big( \pi^l, \mbox{\rm div } \phi)
= \big(  \Delta_l W , \phi\big) , \\
\big( \mbox{\rm div } \bu^l,\psi)& = 0,  \label{fu-im-div}
\end{align}
 where $\Delta_l W = W(t_l) - W(t_{l-1})$. } \\

In this section, our aim is to recall bounds for the strong error of this Euler time scheme. Since we are looking for a $V$-valued process, 
we define the scheme for the velocity 
 projected on divergence free fields and reformulate the algorithm as follows  (see \cite[Section 3]{CarPro}).
 \begin{align}    \label{full-imp1Bis}
\big( \bu^l - \bu^{l-1} , \phi \big) +&  \frac{T}{N} \Big[ \nu \big( A^{\frac{1}{2}} \bu^l , A^{\frac{1}{2}} \phi\big) + \big\langle (\bu^l\cdot\nabla \bu^l), 
\phi\big\rangle \Big] 
= \big(   \Delta_l {W} , \phi\big) ,  \quad \forall \phi \in V.
\end{align}

The following result proves the existence and uniqueness of the solution $\{ \bu^l, \, l=0, \cdots, N\}$ of \eqref{full-imp1Bis}; 
it provides moment estimates for this solution.
Note that since $\|u_0\|_V^2$ has a finite exponential moment, all the dyadic moments of this random variable are finite.
Therefore,  we may use  the
 induction argument which relates two consecutive dyadic
numbers (see step 4 of the proof of   \cite[Lemma 3.1]{BrCaPr}) and deduce estimates for any exposant.  
\begin{lemma} \cite[Lemma 3.1]{CarPro} 		\label{moments_uN}
Assume \eqref{K0} and \eqref{gamma0}
 Then there exists a unique solution $\{\bu^l\}_{l=0}^N$ to \eqref{full-imp1Bis} with $\bu^0=u_0$. The random variables
$\bu^l$ are ${\mathcal F}_{t_l}$-measurable and belong to $L^2(\Omega;V)$ a.s. Furthermore, given any $q\in [2,\infty)$ 
\begin{align}
 \EE\Big( \max_{0\leq l\leq N} \|\bu^l\|_V^{q} + 2\nu \frac{T}{N} \sum_{l=1}^N \| \bu^l \|_V^{q-2}\; |A \bu^l |^2_{\LL^2}\Big)
&\leq C_1(T,q), \label{C(T,q)}\\
\EE\Big( \sum_{l=1}^N \|\bu^l - \bu^{l-1}\|_V^2 \; \| \bu^l\|_V^2\Big) & \leq C_2(T,2),    \label{C(T,2)}\\
\EE \Big[ \Big( \sum_{l=1}^N \| \bu^l- \bu^{l-1}\|_V^2 \Big)^{q} + \Big( \nu \frac{T}{N} \sum_{l=1}^N |A \bu^l|_{\LL^2}^2 \Big)^{q} \Big] & 
\leq C_3(T,q), 
\label{C(T,4)}
\end{align}
where  for $i=1,2,3$, $C_i(T,q)$  is a constant which does not depend on $N$.
\end{lemma}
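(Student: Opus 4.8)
The plan is to treat the four assertions of the lemma in turn, following the strategy of \cite{CarPro} and \cite{BrCaPr}: well-posedness of each implicit step, measurability, the basic energy and $V$-estimates, and their propagation to all moments.

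\textbf{Existence, uniqueness and measurability.} Fix $l$ and freeze $\omega$. Given the ${\mathcal F}_{t_{l-1}}$-measurable datum $\bu^{l-1}$ and the increment $\Delta_l W$, equation \eqref{full-imp1Bis} is the stationary problem: find $v=\bu^l\in V$ with $(v,\phi)+\frac{T}{N}\big[\nu(A^{\frac12}v,A^{\frac12}\phi)+\langle B(v,v),\phi\rangle\big]=(\bu^{l-1}+\Delta_l W,\phi)$ for all $\phi\in V$. I would solve this by Galerkin projection onto $\mathrm{span}\{\zeta_1,\dots,\zeta_m\}$ and Brouwer's fixed point theorem; the a priori bound comes from taking $\phi=v$ and using the cancellation $\langle B(v,v),v\rangle=0$ from \eqref{B}, which gives $|v|_{\LL^2}^2+\frac{T}{N}\nu\|v\|_V^2\le(\bu^{l-1}+\Delta_l W,v)$ and hence coercivity; passing to the limit $m\to\infty$ uses the compact embedding $V\hookrightarrow H$. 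For uniqueness, the difference $w$ of two solutions satisfies, after testing with $w$, writing $B(v_1,v_1)-B(v_2,v_2)=B(v_1,w)+B(w,v_2)$, and using $\langle B(v_1,w),w\rangle=0$, the inequality $|w|_{\LL^2}^2+\frac{T}{N}\nu\|w\|_V^2\le\frac{T}{N}\bar C\,\|v_2\|_V\,|w|_{\LL^2}\|w\|_V$ via \eqref{interpol}, so $w=0$ for the mesh fine enough (or unconditionally using the a priori $\mathrm{Dom}(A)$-bound below); see \cite{CarPro}. Uniqueness makes $\bu^l$ a measurable function of $(\bu^{l-1},\Delta_l W)$, so $\bu^l$ is ${\mathcal F}_{t_l}$-measurable by induction.

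\textbf{The $V$-energy identity.} Testing \eqref{full-imp1Bis} with $A\bu^l$ and using the polarization identity $(A^{\frac12}(\bu^l-\bu^{l-1}),A^{\frac12}\bu^l)=\frac12\big(\|\bu^l\|_V^2-\|\bu^{l-1}\|_V^2+\|\bu^l-\bu^{l-1}\|_V^2\big)$ together with the crucial cancellation $\langle B(\bu^l,\bu^l),A\bu^l\rangle=0$ from \eqref{A-B}, I obtain
\[
\tfrac12\big(\|\bu^l\|_V^2-\|\bu^{l-1}\|_V^2+\|\bu^l-\bu^{l-1}\|_V^2\big)+\tfrac{T}{N}\nu|A\bu^l|_{\LL^2}^2=(A^{\frac12}\Delta_l W,A^{\frac12}\bu^l).
\]
Since $\bu^l$ depends on $\Delta_l W$, the right-hand side is \emph{not} a martingale increment; I split $A^{\frac12}\bu^l=A^{\frac12}\bu^{l-1}+A^{\frac12}(\bu^l-\bu^{l-1})$, keep $m_l:=(A^{\frac12}\Delta_l W,A^{\frac12}\bu^{l-1})$ as a genuine martingale increment, and absorb the remainder by Young's inequality: $(A^{\frac12}\Delta_l W,A^{\frac12}(\bu^l-\bu^{l-1}))\le|A^{\frac12}\Delta_l W|_{\LL^2}^2+\frac14\|\bu^l-\bu^{l-1}\|_V^2$. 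Summing over $l$, taking expectations, and using $\EE|A^{\frac12}\Delta_l W|_{\LL^2}^2=\frac{T}{N}K_0$ from \eqref{K0} yields \eqref{C(T,q)} for $q=2$ and the first-moment versions of the $\sum_l\|\bu^l-\bu^{l-1}\|_V^2$ and $\frac{T}{N}\nu\sum_l|A\bu^l|_{\LL^2}^2$ bounds.

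\textbf{Higher moments and the remaining estimates.} For general $q$ I multiply the per-step identity by $\|\bu^l\|_V^{q-2}$ and use the convexity inequality $\|\bu^l\|_V^{q-2}(\|\bu^l\|_V^2-\|\bu^{l-1}\|_V^2)\ge\frac2q(\|\bu^l\|_V^q-\|\bu^{l-1}\|_V^q)$, so that the leading term telescopes and produces exactly the weighted dissipation $\frac{T}{N}\sum_l\|\bu^l\|_V^{q-2}|A\bu^l|_{\LL^2}^2$ of \eqref{C(T,q)}. The martingale sum $M_L=\sum_{l\le L}\|\bu^{l-1}\|_V^{q-2}m_l$ is controlled by a discrete Burkholder--Davis--Gundy inequality; using $m_l^2\le|A^{\frac12}\Delta_l W|_{\LL^2}^2\|\bu^{l-1}\|_V^2$ its bracket is bounded by $(\max_l\|\bu^l\|_V^{2q-2})\sum_l|A^{\frac12}\Delta_l W|_{\LL^2}^2$, so that $\EE\max_L|M_L|\lesssim\EE\big[\max_l\|\bu^l\|_V^{q-1}\big(\sum_l|A^{\frac12}\Delta_l W|_{\LL^2}^2\big)^{1/2}\big]$. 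A Young split then absorbs $\max_l\|\bu^l\|_V^{q}$ into the left-hand side with a small coefficient, leaving the Gaussian moments $\EE\big(\sum_l|A^{\frac12}\Delta_l W|_{\LL^2}^2\big)^{q/2}\le C(q)(TK_0)^{q/2}$ bounded uniformly in $N$; all the moments $\EE\|u_0\|_V^q$ that enter are finite thanks to \eqref{gamma0}. This gives \eqref{C(T,q)} and \eqref{C(T,4)} for $q$ a power of $2$, and the dyadic induction of \cite[step 4 of the proof of Lemma 3.1]{BrCaPr} fills in all real $q\ge2$. Finally \eqref{C(T,2)} follows from Cauchy--Schwarz in $\omega$, since $\sum_l\|\bu^l-\bu^{l-1}\|_V^2\|\bu^l\|_V^2\le(\max_l\|\bu^l\|_V^2)\sum_l\|\bu^l-\bu^{l-1}\|_V^2$, whence $\EE[\cdots]\le(\EE\max_l\|\bu^l\|_V^4)^{1/2}(\EE(\sum_l\|\bu^l-\bu^{l-1}\|_V^2)^2)^{1/2}$ is finite by \eqref{C(T,q)} with $q=4$ and \eqref{C(T,4)} with $q=2$.

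\textbf{Main obstacle.} The delicate point is the self-referential absorption in the last step: the BDG bracket reintroduces $\max_l\|\bu^l\|_V$, so for large $q$ the estimate cannot be closed in one pass. This is precisely why the bound must be bootstrapped dyadically — proving it for $q$ while assuming it for $q/2$ — and why the uniform-in-$N$ Gaussian moment control of $\sum_l|A^{\frac12}\Delta_l W|_{\LL^2}^2$ is essential. A secondary subtlety is the uniqueness of each implicit nonlinear step, where the bilinear term is not sign-definite and the interpolation estimate only closes once the a priori $V$- and $\mathrm{Dom}(A)$-bounds are available.
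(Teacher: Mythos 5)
You should first note that the paper does not prove this lemma at all: it is imported from \cite[Lemma 3.1]{CarPro}, the only added ingredient being the observation that, since \eqref{gamma0} makes every moment of $\|u_0\|_V^2$ finite, the dyadic induction of step 4 of the proof of \cite[Lemma 3.1]{BrCaPr} upgrades the estimates from dyadic to arbitrary exponents $q$. Your proposal reconstructs essentially the strategy of those references: the per-step $V$-energy identity obtained by testing \eqref{full-imp1Bis} with $A\bu^l$ and using the cancellation \eqref{A-B}, the splitting of $(A^{\frac12}\Delta_l W, A^{\frac12}\bu^l)$ into the martingale increment $m_l=(A^{\frac12}\Delta_l W, A^{\frac12}\bu^{l-1})$ plus a remainder absorbed by Young's inequality, the Burkholder--Davis--Gundy/absorption step, and the dyadic bootstrap. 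Structurally this matches what the cited proofs (and Section \ref{s-exp-mom-Euler} of the paper, which reuses the same identity) do.

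There are, however, two genuine gaps. The serious one is uniqueness. Your difference estimate yields $w=0$ only on the event $\big\{\frac{T}{N}\,\bar{C}^2\|v_2\|_V^2<2\nu\big\}$, and $\|v_2\|_V$ is random with no deterministic bound: testing the step equation with $v_2$ gives only $\frac{T}{N}\nu\|v_2\|_V^2\le \frac12|\bu^{l-1}+\Delta_l W|_{\LL^2}^2$, so the smallness condition reduces to $\bar{C}^2|\bu^{l-1}+\Delta_l W|_{\LL^2}^2<4\nu^2$, in which the mesh parameter has cancelled; hence no choice of ``mesh fine enough'' makes uniqueness hold almost surely, and the ${\rm Dom}(A)$ a priori bound suffers from exactly the same defect. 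This is precisely why, in \cite{BrCaPr} and \cite{CarPro}, the ${\mathcal F}_{t_l}$-measurability of the iterates is obtained through a measurable-selection argument rather than deduced from uniqueness of the implicit nonlinear step; your claim that ``uniqueness makes $\bu^l$ a measurable function of $(\bu^{l-1},\Delta_l W)$'' therefore rests on a step that fails. The second, more technical, gap concerns the weighted estimate: after multiplying the identity by $\|\bu^l\|_V^{q-2}$, the product $\|\bu^l\|_V^{q-2}m_l$ is \emph{not} a martingale increment; you tacitly replace the weight by the ${\mathcal F}_{t_{l-1}}$-measurable $\|\bu^{l-1}\|_V^{q-2}$, but then the correction $\big(\|\bu^l\|_V^{q-2}-\|\bu^{l-1}\|_V^{q-2}\big)m_l$ must be estimated, and likewise the term $\sum_l\|\bu^l\|_V^{q-2}|A^{\frac12}\Delta_l W|_{\LL^2}^2$ cannot be computed by independence since the weight depends on $\Delta_l W$; it requires the split $\le\epsilon\max_l\|\bu^l\|_V^{q}+C_\epsilon\big(\sum_l|A^{\frac12}\Delta_l W|_{\LL^2}^2\big)^{q/2}$. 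These corrections are exactly where the dyadic induction you invoke at the end does its work, so your proof is repairable on this point, but as written the one-pass absorption does not close.
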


 The Euler scheme has exponential moments similar to those of the solution $u$ to \eqref{2D-NS}; the next result partly extends
Theorems~8.1 and 8.3 in \cite{BeMi-FE}, including exponential moments of $k \nu \sum_l |A\bu^l |_{\LL^2}^2$. This refinement will be crucial to
deal with space-time approximation of the solution to \eqref{2D-NS}. The proof is given in Section \ref{s-exp-mom-Euler}.

\begin{theorem}		\label{exp-mom}
 (i) Let $u_0\in V$ be deterministic. Then for $0<\alpha < \tilde{\alpha}_0$,
 there exists a positive constant $C_1(\alpha)$ such that for $N$ large enough, 
 \begin{equation} 	\label{exp-moments-ul_al}
\EE\Big[ \exp\Big( \alpha  \max_{0\leq l\leq N}   |A^{\frac{1}{2}}  \bu^l |_{\LL^2}^2 \Big)  \Big] 
= C_1(\alpha) < \infty. 
\end{equation} 
 and for $\beta < \frac{\tilde{\alpha}_0}{2}$, there exists a positive constant $C_2(\beta)$ such that for $N$ large enough, 
\begin{equation} 	\label{exp-moments-Aul_al}
\EE\Big[ \exp\Big( \beta  \max_{0\leq n\leq N}  \Big[  |A^{\frac{1}{2}}  \bu^n|_{\LL^2}^2  + \beta  \nu \frac{T}{N} \sum_{l=1}^n |A \bu^l|_{\LL^2}^2 \Big] \Big)  \Big] 
= C_2(\beta) < \infty. 
\end{equation} 

 (ii)  Let $u_0$ be random, ${\mathcal F}_0$-measurable  and satisfy  \eqref{gamma0}.\\
Set $ \tilde{\beta}_0:= \tilde{\alpha}_0 \;  \frac{\gamma_0 }{\gamma_0+ \tilde{\alpha_0}}$. Then for $0<\alpha <\tilde{ \beta}_0$ there exists a positive constant
$C_1(\alpha)$ such that \eqref{exp-moments-ul_al} holds for $N$ large enough.\\
Set  $\tilde{\beta}_1:=  \tilde{\alpha}_0\, \frac{ \gamma_0}{2 \gamma_0+\tilde{\alpha}_0} $; then  for  $0<\beta<\tilde{\beta}_1$ there exists a constant
$C_2(\beta)$ such that \eqref{exp-moments-Aul_al} holds for $N$ large enough. 
\end{theorem}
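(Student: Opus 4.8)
The plan is to establish the exponential moment bounds \eqref{exp-moments-ul_al} and \eqref{exp-moments-Aul_al} by combining the energy structure of the Euler scheme \eqref{full-imp1Bis} with an exponential martingale argument, following the strategy that yielded Theorem \ref{th_mom_exp_u} for the continuous solution. First I would test \eqref{full-imp1Bis} against $\phi = \bu^l$ to obtain the basic discrete energy identity. Using the antisymmetry relation $\langle B(\bu^l,\bu^l),\bu^l\rangle = 0$ from \eqref{B}, the nonlinear term drops out, leaving a relation of the form
\begin{equation*}
|\bu^l|_{\LL^2}^2 - |\bu^{l-1}|_{\LL^2}^2 + |\bu^l - \bu^{l-1}|_{\LL^2}^2 + 2\nu \tfrac{T}{N} |A^{\frac12}\bu^l|_{\LL^2}^2 = 2\big(\Delta_l W, \bu^l\big).
\end{equation*}
To reach \eqref{exp-moments-Aul_al}, which controls $|A^{\frac12}\bu^n|_{\LL^2}^2$ and the cumulative sum $\nu\tfrac{T}{N}\sum_l |A\bu^l|_{\LL^2}^2$, I would instead test against $A\bu^l$, invoking the crucial cancellation $\langle B(\bu^l,\bu^l), A\bu^l\rangle = 0$ from \eqref{A-B}. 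This gives a telescoping inequality for $|A^{\frac12}\bu^l|_{\LL^2}^2$ with a gain term $\nu\tfrac{T}{N}|A\bu^l|_{\LL^2}^2$ and a stochastic forcing $\big(\Delta_l W, A\bu^l\big)$.

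Next I would summing over $l$ up to an arbitrary level $n$ and isolate the stochastic terms. The key technical device is to write the accumulated martingale contribution as a discrete exponential supermartingale: for a parameter $\beta>0$, the quantity
\begin{equation*}
M_n := \exp\Big( 2\beta \sum_{l=1}^n \big(\Delta_l W, A\bu^{l-1}\big) - 2\beta^2 \tfrac{T}{N} \sum_{l=1}^n |A^{\frac12}(Q^{\frac12}\text{-weighted})\bu^{l-1}|^2 \Big)
\end{equation*}
should be arranged to be a supermartingale with respect to $(\mathcal F_{t_n})$, using that $\Delta_l W$ is independent of $\mathcal F_{t_{l-1}}$ and Gaussian. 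The adaptedness here requires care: since the scheme is implicit, $\bu^l$ is $\mathcal F_{t_l}$-measurable, so to get a genuine martingale increment one must either shift to $\bu^{l-1}$ and control the difference $\bu^l - \bu^{l-1}$ using \eqref{C(T,2)}–\eqref{C(T,4)}, or estimate $\EE[\exp(\cdots)\mid \mathcal F_{t_{l-1}}]$ directly by completing the square against the Gaussian increment. Assumption \eqref{K0}, namely $K_0 = {\rm Tr}(A^{\frac12}QA^{\frac12})<\infty$, is exactly what bounds the quadratic variation terms $\tfrac{T}{N}\sum_l |A^{\frac12}\Delta_l W|^2$-type contributions arising from testing against $A\bu^l$, and this is why \eqref{K0} rather than mere trace-class of $Q$ is needed for \eqref{exp-moments-Aul_al}.

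With the supermartingale in hand, I would absorb the viscous gain term against the exponential weight: the constraints $\alpha < \tilde{\alpha}_0 = \nu/{\rm Tr}(Q)$ and $\beta < \tilde{\alpha}_0/2$ arise precisely from requiring that after completing the square the residual coefficient of the dissipation $\nu\tfrac{T}{N}|A^{\frac12}\bu^l|_{\LL^2}^2$ (respectively $\nu\tfrac{T}{N}|A\bu^l|_{\LL^2}^2$) stays nonnegative, so that the energy dissipation dominates the noise-induced growth. Taking expectations of the supermartingale inequality and using the exponential integrability of the initial datum from \eqref{gamma0} then yields a bound uniform in $N$ (for $N$ large enough). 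For part (ii) the random initial condition contributes a factor $\EE[\exp(\cdots \gamma_0|A^{\frac12}u_0|_{\LL^2}^2 \cdots)]$; splitting the exponent via Young's inequality between the $\alpha$-weighted dynamics and the $\gamma_0$-weighted datum produces the modified thresholds $\tilde\beta_0 = \tilde\alpha_0\,\gamma_0/(\gamma_0+\tilde\alpha_0)$ and $\tilde\beta_1 = \tilde\alpha_0\,\gamma_0/(2\gamma_0+\tilde\alpha_0)$, where the extra factor $2$ in $\tilde\beta_1$ reflects the doubled noise coefficient appearing when testing against $A\bu^l$ rather than $\bu^l$.

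The main obstacle I anticipate is the implicitness of the scheme in the martingale step: because $\bu^l$ depends on $\Delta_l W$, the naive forcing term $\big(\Delta_l W, A\bu^l\big)$ is not a clean martingale increment, and the correction involving $\big(\Delta_l W, A(\bu^l - \bu^{l-1})\big)$ must be controlled. The cleanest route is likely to rewrite this correction using the energy identity itself (it contributes a nonnegative or small-order term) and to verify that the discrete conditional Gaussian computation $\EE\big[\exp(2\beta(\Delta_l W, A\bu^{l-1}))\mid \mathcal F_{t_{l-1}}\big] = \exp(2\beta^2 \tfrac{T}{N}|Q^{\frac12}A^{\frac12}\text{-type}|\bu^{l-1}|^2)$ closes exactly under \eqref{K0}. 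Establishing this supermartingale property rigorously, while keeping all constants independent of $N$, is where the bulk of the careful estimation will lie.
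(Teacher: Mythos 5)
Your plan for \eqref{exp-moments-Aul_al} is essentially the paper's argument: test \eqref{full-imp1Bis} with $\phi=A\bu^l$, use the cancellation \eqref{A-B}, replace the non-adapted pairing $\big(A^{\frac{1}{2}}\Delta_l W, A^{\frac{1}{2}}\bu^l\big)$ by $\big(A^{\frac{1}{2}}\Delta_l W, A^{\frac{1}{2}}\bu^{l-1}\big)$ plus a correction that is absorbed \emph{pathwise} by the term $|A^{\frac{1}{2}}(\bu^l-\bu^{l-1})|_{\LL^2}^2$ produced by the identity $(a-b,a)=\frac{1}{2}\big(|a|^2-|b|^2+|a-b|^2\big)$, then control the resulting genuine discrete martingale $\sum_{l\geq 2}\big(\Delta_l W, A\bu^{l-1}\big)$ by an exponential-martingale and H\"older argument, its bracket $\leq 4\alpha^2 {\rm Tr}(Q)\frac{T}{N}\sum_l |A\bu^l|_{\LL^2}^2$ being dominated by the viscous dissipation, while the Young remainders $\sum_l |A^{\frac{1}{2}}\Delta_l W|_{\LL^2}^2$ are handled, for $N$ large, by the exponential integrability of Gaussian $V$-norms under \eqref{K0}. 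On this half your proposal is sound, including the identification of where implicitness bites and how to fix it (your alternative fix via \eqref{C(T,2)}--\eqref{C(T,4)} would not work, though: those are polynomial moment bounds and cannot be fed into an exponential; only the pathwise absorption closes the argument).

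The genuine gap is in your treatment of \eqref{exp-moments-ul_al}. That estimate concerns $\max_{l}|A^{\frac{1}{2}}\bu^l|_{\LL^2}^2$, i.e.\ the $V$-norm, not the $\LL^2$-norm. Testing with $\phi=\bu^l$, as you propose for this part, yields an energy identity for $|\bu^l|_{\LL^2}^2$ whose dissipation is $\nu\frac{T}{N}\sum_l|A^{\frac{1}{2}}\bu^l|_{\LL^2}^2$; from it one can at best obtain exponential moments of $\max_l|\bu^l|_{\LL^2}^2$ plus the \emph{time-averaged} $V$-norm, never of $\max_l|A^{\frac{1}{2}}\bu^l|_{\LL^2}^2$, since a sum does not control a maximum of its summands. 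In the paper, \emph{both} \eqref{exp-moments-ul_al} and \eqref{exp-moments-Aul_al} are deduced from the same inequality \eqref{max-nabla-ul} obtained by testing with $A\bu^l$; the difference between the thresholds $\tilde{\alpha}_0$ and $\tilde{\alpha}_0/2$ has nothing to do with the choice of test function. It comes from how the dissipation $\nu\frac{T}{N}\sum_l|A\bu^l|_{\LL^2}^2$ is allocated: for \eqref{exp-moments-ul_al} it is entirely spent on dominating the martingale bracket (giving the constraint $\mu\alpha\,{\rm Tr}(Q)\leq\nu$, i.e.\ $\mu\alpha\leq\tilde{\alpha}_0$), whereas for \eqref{exp-moments-Aul_al} a portion $\beta\nu\frac{T}{N}\sum_l|A\bu^l|_{\LL^2}^2$ is kept \emph{inside} the exponential, leaving only half for absorption (giving $2\mu\beta\,{\rm Tr}(Q)\leq\nu$, i.e.\ $\mu\beta\leq\tilde{\alpha}_0/2$). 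The same correction applies to your explanation of $\tilde{\beta}_0$ versus $\tilde{\beta}_1$ in part (ii): the extra $\gamma_0$-term there arises from the H\"older exponent reserved for $\exp(\gamma_0|A^{\frac{1}{2}}u_0|_{\LL^2}^2)$ via \eqref{gamma0} combined with the constraint $\mu\alpha\leq\tilde{\alpha}_0$ (resp.\ $\leq\tilde{\alpha}_0/2$), not from any ``doubled noise coefficient when testing against $A\bu^l$''.
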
 

\subsection{Strong convergence for the time Euler scheme}

For $k=0, \cdots, N$, let $e_k:=u(t_k) - \bu^k$ denote the error of this scheme (note that $e_0=0$). 
Then for any $\phi \in V$ and $j=1, \cdots, N$,  we have
\begin{align}											\label{e-e2}
(e_{j} - e_{j-1}, \phi) & +  \int_{t_{j-1}}^{t_j}  \Big[ \nu \big( A^{\frac{1}{2}} u(s) - A^{\frac{1}{2}} \bu^j\, ,\,  A^{\frac{1}{2}} \phi\big) 
+ \big\langle  B(u(s),u(s)) - B(\bu^j,\bu^j)) \, ,\,  \phi\big\rangle \Big] ds =0. 
\end{align}
 The following strong convergence is the main result of this section. Unlike Theorem \cite[Th 4.6]{Be-Mi_time}, 
the maximal speed of convergence  is given by the coefficient $\eta$ which is the time regularity of the solution $u$. 
However, for the convergence to hold there is a constraint on ${\rm Tr} (Q)$ which reflects the strength of the noise. 
\begin{theorem} \label{Loc_cv_Euler} 
Let $u_0\in V$ be deterministic;  assume    condition \eqref{K0} is satisfied  and 
   ${\rm Tr}\, (Q) < \frac{2 \nu^2}{{\bar C}^2\, T} $. 
Then, given any  $\eta \in (0, 1)$ there exists a positive constant $C$ such that \begin{equation}				\label{moments1}
\EE\Big( \max_{1\leq j \leq N}    |e_j|_{\LL^2}^2   + {\nu} \frac{T}{N}
\sum_{j=1}^N   |A^{\frac{1}{2}} e_j|_{\LL^2}^2   \Big) \leq C \Big(\frac{T}{N}\Big)^\eta , 
\end{equation}
for   $N$ large enough. 
\end{theorem}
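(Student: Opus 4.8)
The plan is to test the error equation \eqref{e-e2} with $\phi = e_j$, exploiting the fact that, the noise being additive, no stochastic integral survives in \eqref{e-e2}; the resulting difference inequality is therefore pathwise and amenable to a deterministic Gronwall argument. Using $(e_j - e_{j-1}, e_j) = \tfrac12\big(|e_j|_{\LL^2}^2 - |e_{j-1}|_{\LL^2}^2 + |e_j - e_{j-1}|_{\LL^2}^2\big)$ and discarding the nonnegative term $|e_j-e_{j-1}|_{\LL^2}^2$, I would keep the dissipation $\nu\frac{T}{N}|A^{1/2}e_j|_{\LL^2}^2$ and split the viscous and nonlinear contributions by inserting $u(t_j)$. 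For the viscous part I write $A^{1/2}u(s) - A^{1/2}\bu^j = A^{1/2}(u(s)-u(t_j)) + A^{1/2}e_j$, retaining the genuine dissipation and leaving the increment $A^{1/2}(u(s)-u(t_j))$ as an error term. For the nonlinearity I decompose $B(u(s),u(s)) - B(\bu^j,\bu^j) = \big[B(u(s),u(s)) - B(u(t_j),u(t_j))\big] + \big[B(u(t_j),u(t_j)) - B(\bu^j,\bu^j)\big]$; the first bracket is a time-increment error term, while, writing $u(t_j)=\bu^j+e_j$ in the second bracket, the three resulting terms $B(\bu^j,e_j)$, $B(e_j,\bu^j)$, $B(e_j,e_j)$, paired with $e_j$, reduce by the antisymmetry relations \eqref{B} to the single quadratic term $\langle B(e_j,u(t_j)),e_j\rangle$.

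The heart of the argument is the estimate of this quadratic term. Using H\"older's inequality in $\LL^4\times\LL^2\times\LL^4$ and the Gagliardo--Nirenberg inequality \eqref{interpol}, I bound it directly by $|\langle B(e_j,u(t_j)),e_j\rangle| = |b(e_j,u(t_j),e_j)| \leq |e_j|_{\LL^4}^2\,|A^{1/2}u(t_j)|_{\LL^2} \leq \bar{C}\,|A^{1/2}u(t_j)|_{\LL^2}\,|e_j|_{\LL^2}\,|A^{1/2}e_j|_{\LL^2}$, so that Young's inequality gives $\frac{T}{N}|\langle B(e_j,u(t_j)),e_j\rangle| \leq \frac{\nu}{4}\frac{T}{N}|A^{1/2}e_j|_{\LL^2}^2 + \frac{\bar{C}^2}{\nu}\frac{T}{N}|A^{1/2}u(t_j)|_{\LL^2}^2\,|e_j|_{\LL^2}^2$. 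The first summand is absorbed by the dissipation and the second is the Gronwall coefficient $c_j\,|e_j|_{\LL^2}^2$, with $\sum_j c_j$ of order $\frac{\bar{C}^2 T}{\nu}\sup_{t\in[0,T]}|A^{1/2}u(t)|_{\LL^2}^2=:Tg$. It is exactly the constant $\bar{C}$ of \eqref{interpol}, together with the exponential-moment threshold $\tilde{\alpha}_0=\nu/\mathrm{Tr}(Q)$ of \eqref{alpha0} and Theorem \ref{th_mom_exp_u}, that forces the hypothesis $\mathrm{Tr}(Q)<\frac{2\nu^2}{\bar{C}^2 T}$.

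Next I would treat the error terms. The viscous increment $\nu\int_{t_{j-1}}^{t_j}\!\big(A^{1/2}(u(s)-u(t_j)),A^{1/2}e_j\big)ds$ and the two bilinear increments $\langle B(u(s)-u(t_j),u(s)),e_j\rangle$ and $\langle B(u(t_j),u(s)-u(t_j)),e_j\rangle$ are each handled by moving $e_j$ into a slot carrying $|A^{1/2}e_j|_{\LL^2}$ via \eqref{B} and then applying Young's inequality: a further fraction of the dissipation is absorbed, leaving data terms $D_j$ of the type $\|u(s)-u(t_j)\|_V^2 + \big(|u(s)|_{\LL^4}^2 + |u(t_j)|_{\LL^4}^2\big)|u(s)-u(t_j)|_{\LL^4}^2$, integrated over $(t_{j-1},t_j)$, while a positive multiple of $\frac{T}{N}|A^{1/2}e_j|_{\LL^2}^2$ remains on the left. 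Summing the pathwise inequality $|e_j|_{\LL^2}^2 - |e_{j-1}|_{\LL^2}^2 + c_0\nu\frac{T}{N}|A^{1/2}e_j|_{\LL^2}^2 \leq c_j|e_j|_{\LL^2}^2 + 2D_j$ over $j$ and applying a discrete Gronwall lemma (using $\tfrac{1}{1-x}\leq e^{2x}$ on the range $x\leq\tfrac12$) yields, for a.e. $\omega$, $\max_{1\leq n\leq N}|e_n|_{\LL^2}^2 + \nu\frac{T}{N}\sum_{j=1}^N|A^{1/2}e_j|_{\LL^2}^2 \leq Z\,e^{2Tg}$, where $Z = C\sum_j\int_{t_{j-1}}^{t_j}D_j\,ds$ no longer involves $e_j$. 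Taking expectations and using H\"older's inequality with conjugate exponents $p,p'$, I split $\EE\big[Z\,e^{2Tg}\big] \leq \big(\EE[Z^p]\big)^{1/p}\big(\EE[e^{2p'Tg}]\big)^{1/p'}$; the second factor is finite by Theorem \ref{th_mom_exp_u}(i), since $\mathrm{Tr}(Q)<\frac{2\nu^2}{\bar{C}^2 T}$ leaves room to choose $p'>1$ with the $\sup_t|A^{1/2}u(t)|_{\LL^2}^2$-coefficient of $2p'Tg$ below $\tilde{\alpha}_0$, while $\big(\EE[Z^p]\big)^{1/p}\leq C(\frac{T}{N})^\eta$ follows from \eqref{holder-V_q} and, after a Cauchy--Schwarz pairing of $\sup_s|u(s)|_{\LL^4}^2$ (with finite moments by \eqref{bound_u}) against \eqref{holder-L4_q} with exponent $2p$, from the time-regularity Lemma \ref{holder-q}.

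I expect the main obstacle to be the interplay between the randomness of the Gronwall coefficient $c_j$ and the wish to avoid localization: the discrete Gronwall needs $\max_j c_j\leq\tfrac12$, which holds only on the event $G_N=\{\sup_{t\in[0,T]}|A^{1/2}u(t)|_{\LL^2}^2\leq \frac{\nu N}{4\bar{C}^2 T}\}$. On the complement $G_N^c$ I would bound the left-hand side of \eqref{moments1} by $\big(\EE[(\cdot)^2]\big)^{1/2}\,\PP(G_N^c)^{1/2}$, where the second moment is bounded uniformly in $N$ by \eqref{bound_u} and Lemma \ref{moments_uN}, and $\PP(G_N^c)$ decays exponentially in $N$ by Chebyshev's inequality applied to the exponential moment of Theorem \ref{th_mom_exp_u}(i); this contribution is negligible against $(\frac{T}{N})^\eta$. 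Collecting the two pieces would give \eqref{moments1} for every $\eta\in(0,1)$ and $N$ large enough. The delicate point throughout is the bookkeeping of the Young constants, so that the fractions of dissipation absorbed by all error terms and by the quadratic term stay strictly below $\nu$ while the Gronwall exponent stays below the threshold $\tilde{\alpha}_0$.
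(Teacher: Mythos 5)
Your overall strategy coincides with the paper's: test \eqref{e-e2} with $\phi=e_j$, use the antisymmetry \eqref{B} to reduce the scheme-dependent part of the nonlinearity to the single quadratic term $\langle B(e_j,u(t_j)),e_j\rangle$, run a pathwise discrete Gronwall argument (possible because the noise cancels in the error equation), and close by H\"older's inequality against the exponential moments of Theorem \ref{th_mom_exp_u}(i) and the time-regularity Lemmas \ref{Holder-L2} and \ref{holder-q}. However, there is a genuine quantitative gap in your constants, and for this theorem the constants \emph{are} the statement: with your Young inequality you absorb only $\tfrac{\nu}{4}$ of the dissipation, producing the coefficient $\tfrac{\bar{C}^2}{\nu}|A^{\frac{1}{2}}u(t_j)|_{\LL^2}^2$, and your Gronwall step (via $\tfrac{1}{1-x}\le e^{2x}$ on the event $G_N$) produces the factor $e^{2Tg}$. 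Keeping track of the factor $2$ coming from $(e_j-e_{j-1},e_j)=\tfrac12\big[|e_j|_{\LL^2}^2-|e_{j-1}|_{\LL^2}^2+|e_j-e_{j-1}|_{\LL^2}^2\big]$, the exponential you must control in the H\"older step is $\exp\big(c\,p'\,T\tfrac{\bar{C}^2}{\nu}\sup_t|A^{\frac{1}{2}}u(t)|_{\LL^2}^2\big)$ with $c\ge 2$ (in fact $c=4$ with your bookkeeping) and $p'>1$. Theorem \ref{th_mom_exp_u}(i) requires this coefficient to be at most $\tilde{\alpha}_0=\nu/{\rm Tr}(Q)$, whereas the hypothesis ${\rm Tr}(Q)<\tfrac{2\nu^2}{\bar{C}^2 T}$ only guarantees $\tilde{\alpha}_0>\tfrac{\bar{C}^2 T}{2\nu}$. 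Your requirement is therefore $4$ to $8$ times more restrictive than the hypothesis; whenever $\tilde{\alpha}_0$ is only slightly above $\tfrac{\bar{C}^2T}{2\nu}$, the factor $\EE\big[e^{2p'Tg}\big]$ may be infinite and your final H\"older estimate collapses. As written, your argument proves the conclusion only under ${\rm Tr}(Q)\lesssim \tfrac{\nu^2}{2\bar{C}^2T}$, not under the stated condition.

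Both losses are avoidable, and the second fix is exactly the idea the paper is built around. First, absorb $\delta_1\nu$ of the dissipation in the quadratic term with $\delta_1$ arbitrarily close to $1$ (the remaining error terms only need arbitrarily small fractions $\delta_0,\delta_2,\delta_3$, and the resulting factor $1-\sum_r\delta_r$ in front of the dissipation sum is harmless since it only affects the constant $C$ in \eqref{moments1}); this brings the Gronwall coefficient down to $\tfrac{\bar{C}^2}{2\delta_1\nu}\sup_t|A^{\frac{1}{2}}u(t)|_{\LL^2}^2$ after the doubling, as in \eqref{g-Z}. Second, the smallness requirement $\max_j c_j\le\tfrac12$ — and with it both your localization on $G_N$ and the lossy factor $e^{2x}$ — can be dispensed with entirely: in the summed inequality \eqref{maj_max}, peel off the $j=k$ term of $\sum_{j\le k}c_j|e_j|_{\LL^2}^2$ and bound it crudely by $c_k\cdot 2\big(|u(t_k)|_{\LL^2}^2+|\bu^k|_{\LL^2}^2\big)$; since $c_k$ carries a factor $\tfrac{T}{N}$, this term goes into $Z_1$ with $\big[\EE(Z_1^q)\big]^{1/q}\le C\,T/N$ by \eqref{bound_u} and \eqref{C(T,q)}. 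The Gronwall inequality then reads $\max_{1\le j\le k}|e_j|_{\LL^2}^2\le Z+\tfrac{Tg}{N}\sum_{j=1}^{k-1}|e_j|_{\LL^2}^2$, with the sum stopping at $k-1$, and the standard discrete Gronwall lemma gives $\big(1+\tfrac{Tg}{N}\big)^{N-1}\le e^{Tg}$ as in \eqref{Gronwall1} — with no smallness condition on $\tfrac{Tg}{N}$, no factor $2$, and no localization (the paper's stated methodological point). With these two changes your scheme closes under the stated hypothesis: one then chooses $\delta_1<1$ and $p>1$ so that $pT\tfrac{\bar{C}^2}{2\delta_1\nu}<\tilde{\alpha}_0$, which is possible precisely because ${\rm Tr}(Q)<\tfrac{2\nu^2}{\bar{C}^2T}$.
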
 
\begin{proof} The proof contains several  steps: some upper estimates of the bilinear term,  a pointwise discrete Gronwall lemma
and moment estimates. We rewrite \eqref{e-e2} with $\phi= e_j$.

\noindent {\bf Step 1:  Upper estimates for the bilinear term}\\
 Let us consider the duality between the difference of the bilinear terms and $e_{j}$, that is the  upper estimate of  $\int_{t_{j-1}}^{t_j} \langle
 B(u(s),u(s)) - B(\bu^j,\bu^j) \, , \, e_{j}\big\rangle ds$.  For every $s\in (t_{j-1}, t_j]$, using the bilinearity of $B$ 
and the antisymmetry property  \eqref{B}, 
we deduce 
\begin{equation} \label{dif_B-B}
  \big\langle B\big( u(s), u(s) \big) -  B\big( \bu^j, \bu^j\big)  \, , \, e_j \big\rangle = \sum_{i=1}^3 T_i(s),
  \end{equation}
where, since  $\big\langle  B\big( v,  \bu^j \big)  \, , \, e_j \big\rangle = \big\langle  B\big( v,  u(t_j)  \big)  \, , \, e_j \big\rangle$ for every $v\in V$,
\begin{align*}
T_1(s):= & \big\langle  B\big( e_j ,  \bu^j \big)  \, , \, e_j \big\rangle = \big\langle B\big(e_j , u(t_j)\big)  \, , \, e_j \big\rangle,\\ 
T_2(s):= &\big\langle  B\big(  u(s) -u(t_j) , u(t_j) \big) \, , \, e_j \big\rangle, \\
T_3(s):= & \big\langle B\big(u(s) , u(s)-u(t_j)\big) \, , \, e_j \big\rangle = - \big\langle B\big(u(s) , e_j\big) \, , \, u(s)-u(t_j) \big\rangle.
\end{align*}
Note that, unlike the first formulation of $T_1(s)$, the second  one only depends on the error and on the solution to \eqref{2D-NS}, 
and not on the approximation scheme. The H\"older  inequality and \eqref{interpol} yield  
 for every $\delta_1 >0$ and $\bar{C}$ defined in the interpolation
inequality \eqref{interpol} 
\begin{align} 		\label{maj_T1}			 
 \int_{t_{j-1}}^{t_j} | T_1(s) |\, ds &\leq \bar{C}  \frac{T}{N} \,  |  e_j|_{\LL^2}  \, |A^{\frac{1}{2}} e_j|_{\LL^2} \,   | A^{\frac{1}{2}} u(t_j)|_{\LL^2}   \nonumber\\
& \leq \delta_1 \nu \frac{T}{N} \, |A^{\frac{1}{2}} e_j|_{\LL^2}^2  + \frac{\bar{C}^2}{4\delta_1 \nu} \, \frac{T}{N} \,  |e_j|_{\LL^2}^2 \, |A^{\frac{1}{2}} u(t_j)|_{\LL^2}^2, 
\end{align}
where the last upper estimate follows from the Young inequality (with conjugate exponents 2 and 2).
A similar argument using the H\"older and Young inequalities with exponents  4, 4 and 2 implies that for any $\delta_2 >0$ and $\gamma_2>0$, 
\begin{align*}	
|T_2(s)| \leq \delta_2 \nu |A^{\frac{1}{2}} e_j|_{\LL^2}^2 + \gamma_2 |e_j|^2_{\LL^2} + 
 C(\nu, \delta_2, \gamma_2)   \|u(t_j)-u(s)\|_{\LL^4}^2 |A^{\frac{1}{2}} u(t_j)|_{\LL^2}^2 . 
\end{align*}
Using the Cauchy-Schwarz inequality   we deduce 
\begin{align}			\label{maj_T2}
 \int_{t_{j-1}}^{t_j} \!\! | T_2(s) |\, ds \leq  \;  &\delta_2 \nu  \frac{T}{N}  |A^{\frac{1}{2}} e_j|_{\LL^2}^2  
 + \gamma_2 \frac{T}{N}    |e_j|_{\LL^2}^2  \nonumber \\
& \; +  
 C(\nu, \delta_2, \gamma_2)   |A^{\frac{1}{2}} u(t_j)|_{\LL^2}^2 \int_{t_{j-1}}^{t_j}\!\!  \|u(t_j)-u(s)\|_{\LL^4}^2 ds . 
\end{align} 
Similar computations using  the H\"older and Young  inequalities imply 
\begin{align}				\label{maj_T3}
\int_{t_{j-1}}^{t_j} | T_3(s) |\, ds \leq & \; \delta_3 \nu \frac{T}{N} |A^{\frac{1}{2}} e_j|_{\LL^2}^2  
+ \frac{1}{4\nu \delta_3}  \int_{t_{j-1}}^{t_j} \|u(s)\|_{\LL^4}^2 \, \|u(t_j)-u(s)\|_{\LL^4}^2 ds
\end{align}
for any $\delta_3>0$. 

Furthermore, note that 
\[ 
 \nu \int_{t_{j-1}}^{t_j} \!\! \big( A^{\frac{1}{2}}(u(s) - \bu ^j)\, , \, A^{\frac{1}{2}} e_j \big)  ds =  \nu  \frac{T}{N}  |A^{\frac{1}{2}} e_j|_{\LL^2}^2 + \nu  \int_{t_{j-1}}^{t_j} \!\!
\big( A^{\frac{1}{2}}(u(s)-u(t_j)) ,  A^{\frac{1}{2}} e_j\big) ds. 
\]
Using the Cauchy-Schwarz and Young inequalities, we deduce 
\[ \nu \int_{t_{j-1}}^{t_j} \!\!
\big| \big( A^{\frac{1}{2}}(u(s)-u(t_j)) ,  A^{\frac{1}{2}} e_j\big)\big|  ds  \leq 
\delta_0 \nu \frac{T}{N} |A^{\frac{1}{2}} e_j|_{\LL^2}^2 + \frac{\nu }{4\delta_0 } \int_{t_{j-1}}^{t_j} \big|A^{\frac{1}{2}}  \big(u(s) - u(t_j) \big)\big|_{\LL^2}^2 ds 
\] 
for any $\delta_0>0$.  Hence using the above upper estimates \eqref{maj_T1} -- \eqref{maj_T3} in \eqref{e-e2} with $\phi = e_j$, we deduce
\begin{align}						\label{(e-e,e)}
\big( e_j-e_{j-1}\, , e_j\big) + \nu \frac{T}{N} |A^{\frac{1}{2}} e_j|_{\LL^2}^2 \leq  & \; \nu \sum_{r=0}^3 \delta_r \frac{T}{N} |A^{\frac{1}{2}} e_j|_{\LL^2}^2 
+ \sum_{l=1}^3 \tilde{T_j}(l)\nonumber\\
&\quad +  \Big( \gamma_2+\frac{\bar{C}^2}{4\delta_1 \nu} |A^{\frac{1}{2}} u(t_j)|_{\LL^2}^2 \Big) \, \frac{T}{N} \, |e_j|_{\LL^2}^2, 
\end{align}
where 
\begin{align*}
\tilde{T_j}(1) =  & \; \frac{\nu }{4 \delta_0 } \int_{t_{j-1}}^{t_j} \big|A^{\frac{1}{2}} \big(u(s) - u(t_j) \big)\big|_{\LL^2}^2 ds, \\
\tilde{T_j}(2) =  & \;   C(\nu, \delta_2, \gamma_2)   \, |A^{\frac{1}{2}} u(t_j)|_{\LL^2}^2 \int_{t_{j-1}}^{t_j} \|u(s)-u(t_j)\|_{\LL^4}^2 ds, \\
\tilde{T_j}(3) =  & \;  \frac{1}{4\nu \delta_3} \int_{t_{j-1}}^{t_j} \|u(s)\|_{\LL^4}^2  \|u(t_j)-u(s)\|_{\LL^4}^2 ds. 
\end{align*}

\noindent {\bf Step 2:  Pointwise discrete Gronwall lemma}\\
We will use the convention $\sum_{j=l_1}^{l_2} x_j =0$ if $l_1>l_2$. 
Adding the inequalities \eqref{(e-e,e)}, 
 using  $e_0=0$ and the identity 
$(a,a-b) = \frac{1}{2} \big[ |a|_{\LL^2}^2 - |b|_{\LL^2}^2 + |a-b|_{\LL^2}^2\big]$, we deduce for $k=1, \cdots, N$
\begin{align}					\label{maj_max} 
& \max_{1\leq j\leq k} \Big[  \frac{1}{2} \Big( |e_j|_{\LL^2}^2 +  \sum_{l=1}^j |e_l-e_{l-1}|_{\LL^2}^2 \Big)
  +   \nu \Big(1-\sum_{r=0}^3 \delta_r \Big)\, \frac{T}{N} \sum_{l=1}^j  |A^{\frac{1}{2}} e_l |_{\LL^2}^2  \Big] \nonumber \\
&\quad  \leq   \frac{T}{N} \, \sum_{j=1}^k\Big( \gamma_2+\frac{\bar{C}^2}{4\delta_1 \nu} |A^{\frac{1}{2}} u(t_j)|_{\LL^2}^2 \Big) \,  |e_j|_{\LL^2}^2    
+  \sum_{i=1}^3 \sum_{j=1}^k \tilde{T_j}(i)\nonumber\\
&\quad \leq 2 \gamma_2\frac{T}{N}\Big( |u(t_k)|_{\LL^2}^2 +|\bu^k|_{\LL^2}^2 \Big) 
+ \frac{\bar{C}^2}{2\delta_1 \nu}\frac{T}{N} |A^{\frac{1}{2}} u(t_k)_{\LL^2}^2 \Big( |u(t_k)|_{\LL^2}^2 +|\bu^k|_{\LL^2}^2 \Big)  \nonumber\\
&\quad \quad + \frac{T}{N} \, \sum_{j=1}^{k-1}\Big( \gamma_2+\frac{\bar{C}^2}{4\delta_1 \nu} |A^{\frac{1}{2}} u(t_j)|_{\LL^2}^2 \Big) \,  |e_j|_{\LL^2}^2    
+  \sum_{i=1}^3 \sum_{j=1}^k \tilde{T_j}(i)\nonumber\\
&\quad \leq  2 \gamma_2\frac{T}{N}\Big(\sup_{s\in [0,T]} |u(s)|_{\LL^2}^2 +\sup_{1\leq k\leq N} |\bu^k|_{\LL^2}^2 \Big) 
+ \frac{\bar{C}^2}{2\delta_1 \nu}\frac{T}{N} \Big( \frac{3}{2} \sup_{s\in [0,T]} \|u(s)\|_{V}^4 +\frac{1}{2}\sup_{1\leq k\leq N} |\bu^k|_{\LL^2}^4 \Big)  \nonumber\\
&\quad \quad + \frac{T}{N} \, \sum_{j=1}^{k-1}\Big( \gamma_2+\frac{\bar{C}^2}{4\delta_1 \nu} \sup_{s\in [0,T]} |A^{\frac{1}{2}} u(s)|_{\LL^2}^2 \Big) \,  |e_j|_{\LL^2}^2    
+  \sum_{i=1}^3 \sum_{j=1}^N \tilde{T_j}(i)
\end{align}
\smallskip

Set 
\begin{align} 	\label{g-Z}
g:= &\;  \Big(2 \gamma_2+\frac{\bar{C}^2}{2\delta_1 \nu} \sup_{s\in [0,T]} |A^{\frac{1}{2}} u(s)|_{\LL^2}^2 \Big), \nonumber \\
Z_1:= &\; 4 \gamma_2\frac{T}{N}\Big(\sup_{s\in [0,T]} |u(s)|_{\LL^2}^2 +\sup_{1\leq k\leq N} |\bu^k|_{\LL^2}^2 \Big) 
+ \frac{\bar{C}^2}{2\delta_1 \nu}\frac{T}{N} \Big( 3\sup_{s\in [0,T]} \|u(s)\|_{V}^4 +\sup_{1\leq k\leq N} |\bu^k|_{\LL^2}^4 \Big) ,  \nonumber \\
Z_2:=& \;  2 \sum_{i=1}^3 \sum_{j=1}^N \tilde{T_j}(i).
\end{align}
Suppose that $\sum_{r=0}^3 \delta_r<1$; then neglecting some non-negative terms in the left hand side of \eqref{maj_max} we deduce 
for $Z:=Z_1+Z_2$ 
$$\max_{1\leq j\leq k}|e_j|_{\LL^2}^2 \leq Z +  \frac{T}{N}\, g\,  \sum_{j=1}^{k-1}  |e_j|_{\LL^2}^2, \quad k=2, ..., N.$$ 
Using the discrete Gronwall lemma,   see \cite{Holte}, we deduce that  a.s. 
\begin{equation}\label{Gronwall1}
\max_{1\leq j\leq N}|e_j|_{\LL^2}^2 
\leq Z \Big( 1+\frac{Tg}{N}\Big)^{N-1} \leq Z  e^{Tg}.
\end{equation}
\newpage
\noindent {\bf Step 3: Strong speed of convergence}   

Taking the expected value in \eqref{Gronwall1} and using the H\"older inequality with conjugate exponents $p,q\in (1,+\infty)$, we obtain
 \begin{align}		\label{maj_E_sup_ej} 
 \EE\Big( \max_{1\leq j\leq k}  |e_j|_{\LL^2}^2 \Big) &\leq  \EE\big( Z e^{Tg}\Big) \leq  
  \Big[\EE \big(  e^{pTg}\big) \Big]^{1/p}  \Big[\EE \big( Z^q\big)\Big]^{1/q} \nonumber\\
  &=e^{2\gamma_2T}\, \Big[\EE \Big( e^{pT  \frac{\bar{C}^2}{2\delta_1 \nu} \sup_{s\in [0,T]} |A^{\frac{1}{2}} u(s)|_{\LL^2}^2 }\Big) \Big]^{1/p}  
  \Big[\EE \big( |Z_1+Z_2|^q\big)\Big]^{1/q} .
      \end{align}
 Since by assumption  ${\rm Tr}\, (Q) < \frac{2\nu^2}{\bar{C}^2 \,  T}$, we have $\frac{T\, \bar{C}^2}{2\, \nu} < \frac{\nu}{{\rm Tr}±, (Q)}:=\tilde{\alpha}_0$. Therefore, 
 we may choose $p\in (1,+\infty)$ (close to 1)
and $\delta_1\in (0,1)$ (close to 1) such that $pT\frac{ \bar{C}^2}{2\delta_1 \nu} <\tilde{\alpha}_0$. 
Using \eqref{exp-moments-udet} we deduce 
\begin{equation}		\label{upper_p_g}
\EE \Big[ \exp\Big( pT  \frac{\bar{C}^2}{2\delta_1 \nu} \sup_{s\in [0,T] } |A^{\frac{1}{2}} u(s)|_{\LL^2}^2  \Big)\Big] =C <\infty. 
 \end{equation}
Choosing $p$ close enough to 1 we may have $q=2^{\tilde{q}}$ for some $\tilde{q} \in [2,+\infty)$. 
Since $\|u_0\|_V^2$ has exponential moments, we have $\EE(\|u_0\|_V^r)<\infty$ for $r$ arbitrary large, that is $r\geq 2 q$ . 
Using the estimates \eqref{bound_u} and  \eqref{C(T,q)}, we infer 
\begin{equation}		\label{maj_Z1_q}
  \big[ \EE\big( Z_1^q\big) \big]^{\frac{1}{q}} \leq C\, \frac{T}{N}.
  \end{equation} 
The time regularity in $V$ proved in \eqref{holder-V_q}  implies 
\begin{equation}	\label{maj_tildeT1_q}
\Big[ \EE\Big(\Big|\sum_{j=1}^{N}\tilde{T_j}(1)\Big|^q\Big)\Big]^{1/q}\leq C \Big(\frac{T}{N}\Big)^{\eta}, \quad  \eta\in (0, 1).
\end{equation}
On the other hand, using the Cauchy-Schwarz inequality we obtain 
\begin{align}		\label{maj_tildeT2_q}
\Big[ \EE\Big(\Big| \sum_{j=1}^{N} \tilde{T_j}(2)&\Big|^q\Big)\Big]^{1/q} =  C(\nu, \delta_2, \gamma_2)  
\Big[ \EE\Big(\Big|\sum_{j=1}^{N} \, |A^{\frac{1}{2}} u(t_j)|_{\LL^2}^2 \int_{t_{j-1}}^{t_j} \|u(s)-u(t_j)\|_{\LL^4}^2 ds \Big|^q\Big) \Big]^{1/q} \nonumber \\
&\leq C  \Big[ \EE\Big(\Big| \sup_{s\in [0,T] } \| u(s)\|_V^{2} \sum_{j=1}^{N}  \int_{t_{j-1}}^{t_j} \|u(s)-u(t_j)\|_{\LL^4}^2 ds \Big|^q\Big)\Big]^{1/q}\nonumber \\
&\leq C \Big[\EE\Big( \sup_{s\in [0,T] } \| u(s)\|_V^{4q}\Big)\Big]^{\frac{1}{2q}} 
\Big[\EE\Big(\Big| \sum_{j=1}^{N}  \int_{t_{j-1}}^{t_j} \|u(s)-u(t_j)\|_{\LL^4}^2 ds \Big|^{2q}\Big)\Big]^{\frac{1}{2q}}\nonumber\\
&\leq C  \Big(\frac{T}{N}\Big)^\eta,		\quad \eta\in( 0, 1), 
\end{align}
where the last upper estimate is a consequence of \eqref{bound_u} and \eqref{holder-L4_q}.  

Using the Gagliardo-Nirenberg inequality \eqref{interpol}, a similar argument implies 
\begin{align} \label{maj_tildeT3_q}
\Big[ \EE\Big(\Big| \sum_{j=1}^{N} \tilde{T_j}(3)\Big|^q\Big)\Big]^{1/q} &=   \frac{1}{4 \nu \delta_3} \Big[ \EE\Big(\sum_{j=1}^{N} 
\int_{t_{j-1}}^{t_j}\|u(s)\|_{\LL^4}^2 \|u(s)-u(t_j)\|_{\LL^4}^2 ds \Big)^q\Big]^{1/q} \nonumber \\
&\leq C  \Big[ \EE\Big(\Big| \sup_{s\in [0,T] } \|u(s)\|_V^{2} \sum_{j=1}^{N}  \int_{t_{j-1}}^{t_j} \|u(s)-u(t_j)\|_{\LL^4}^2 ds \Big|^q\Big) \Big]^{1/q}\nonumber \\
&\leq C  \Big(\frac{T}{N}\Big)^\eta, \quad \eta \in ( 0,1)  . 
\end{align}
The upper estimates \eqref{maj_tildeT1_q}--\eqref{maj_tildeT3_q} imply 
\begin{equation} 		\label{maj_Z2_q}
\big[ \EE\big( |Z_2|^q\big) \Big]^{1/q} \leq C\, \Big( \frac{T}{N}\Big)^\eta, \quad \eta \in (0, 1) . 
\end{equation}
 We then choose $\delta_r >0$, $r=0,2,3$ such that $\sum_{r=0}^3 \delta_r <1$. 
Collecting  the estimates \eqref{maj_E_sup_ej}, \eqref{upper_p_g}, \eqref{maj_Z1_q} and \eqref{maj_Z2_q},   we deduce  
$$\EE\Big( \max_{1\leq j\leq N}  |e_j|_{\LL^2}^2 \Big) \leq C \Big(\frac{T}{N}\Big)^{\eta}, \quad \eta \in (0,1).$$
Finally, plugging this inequality  in \eqref{maj_max}, we obtain \eqref{moments1}; this completes   the proof. 
 \end{proof}

\section{Space-time discretization}		\label{s4}
\subsection{Description of the finite element method}
When studying a space time discretization using finite elements,
 one needs to have a stable pairing of the velocity and the pressure which
 satisfy the discrete LBB-condition (see e.g. \cite{CarPro}, page 2469 and pages 2487-2489). Stability issues are crucial, and the pressure has
 to be discretized together with the velocity.
 
 Let ${\mathcal T}_h$ be a quasi-uniform triangulation of the domain $D\subset \RR^2$, using triangles of maximal diameter $h>0$, and set
 $\bar{D}=\cup_{K\in {\mathcal T}_h} \bar{K}$. Let $\PP_i(K):=[P_i(K)]^2$ denote the space of polynomial vector fields  on $K$
  of degree less than or equal to $i$. Given non negative integers $i,j$, we introduce finite element function spaces 
  \begin{align*}
  {\HH}_h &\,  := \{ {\bf U} \in C^0(\bar{D}) \cap \WW^{1,2}_{per}(D) : {\bf U}\in  \PP_i(K), \quad \forall K\in {\mathcal T}_h\}, \\
  L_h &\, := \{ \Pi \in L^2_{per}(D) : \Pi\in P_j(K), \quad \forall K\in {\mathcal T}_h\},
  \end{align*}
which satisfy the discrete LBB-condition 
\begin{equation}		\label{LBB}
\sup_{\bPhi \in \HH_h} \frac{(\mbox{\rm div }  \bPhi , \Pi)}{|A^{\frac{1}{2}} \bPhi |_{\LL^2} } \geq C\; |\Pi|_{\LL^2}, \quad \forall \Pi \in L_h,
\end{equation}
with a constant $C>0$ independent of the mesh size $h>0$.  Here $C^0(\bar{D})$ denotes the set of continuous 2-dimensional vector fields on $\bar{D}$. 

Define the subset $\VV_h \subset \HH_h$  of discrete divergence-free vector fields 
\begin{equation} 		\label{Vh}
\VV_h:= \{ \bPhi \in \HH_h \; : \; ({\rm div }\,  \bPhi, \Lambda)=0, \quad \forall \Lambda \in L_h\}.
\end{equation}
 Note that in general $\VV_h \not\subset V$. 
A way around this problem is to choose a space approximation such  that $\VV_h\subset V$, such as the Scott-Vogelius mixed elements 
(see \cite{ScoVog} and \cite{Zhang}). 
This particular case yields a better approximation. Indeed, on one hand the pressure will not appear in the upper estimate, and on the
other hand a different localization will provide a polynomial error in the case of an additive noise. 

Let ${\bf Q}_h^0 : \LL^2 \mapsto \VV_h$ (resp. $P^0_h:L^2_{per}(D)\mapsto L_h$) denote the orthogonal projection defined by
\begin{equation} 		\label{Q0h}
({\bf z}- {\bf Q}^0_h {\bf z} , {\bf \Phi})=0, \quad \forall {\bf \Phi} \in \VV_h, \qquad {\rm (resp. } \quad (z-P^0_hz, \Lambda)=0,
 \quad \forall \Lambda \in L_h{\rm )}.
\end{equation} 
The following estimates are standard (see e.g. \cite{HeyRan})
\begin{align}
|{\bf z} - {\bf Q}_h^0 {\bf z}|_{\LL^2} + h |A^{\frac{1}{2}} ({\bf z} - {\bf Q}^0_h {\bf z})|_{\LL^2} & \leq \; C\,  h^2 \,  |A{\bf z}|_{\LL^2}, \quad 
\forall {\bf z}\in V \cap
\WW^{2,2}(D),	\label{I-Q_1}\\
|{\bf z} - {\bf Q}^0_h {\bf z}|_{\LL^2} & \leq \; C\, h \, |A^{\frac{1}{2}}  {\bf z}|_{\LL^2}, \quad \forall {\bf z}\in V,   \label{I-Q_2} \\
|z-P^0_h z|_{L^2} & \leq C\, h\, |\nabla z|_{L^2}, \qquad \forall z\in W^{1,2}_{per}(D).		\label{I-P}
\end{align}

Using the Gagliardo-Nirenberg inequality \eqref{interpol}, we deduce from \eqref{I-Q_1} and \eqref{I-Q_2}  the following upper estimates for
${\bf z}\in V \cap \WW^{2,2}(D)$
\begin{equation} 		\label{I-Q_L4}
\|{\bf z}-\bQ {\bf z}\|_{\LL^4} \leq C h^{\frac{3}{2}} |A{\bf z}|_{\LL^2}, \quad {\rm and} \;  \|{\bf z}-\bQ {\bf z}\|_{\LL^4} \leq 
C h\, |A{\bf z}|_{\LL^2}^{\frac{1}{2}} \, |A^{\frac{1}{2}} {\bf z}|_{\LL^2}^{\frac{1}{2}}.
\end{equation}

 The following result about the pressure term will be used in the study of space-time discretization. It extends \cite[Lemma 3.2]{CarPro}
 to arbitrary moments. 
\begin{lemma} Let  \eqref{K0} be satisfied and suppose that $u_0$ is    ${\mathcal F}_0$-measurable such that   \eqref{gamma0} holds.  
Let  $\{\bu^l, \pi^l\}_{l=0, \cdots, N}$ be the solution to \eqref{full-imp1} and 
\eqref{fu-im-div}.
 Then    given  $q\in [2,\infty)$, there exists  a constant $C$ depending on $T,q,K_0$ and $\EE( \|u_0\|_V^{6q})<\infty$,  such that 
\begin{equation} 		\label{pressure-V}
\EE\Big(\Big|  \frac{T}{N} \sum_{l=1}^N |\nabla \pi^l |_{\LL^2}^2\Big|^q  \Big) \leq C, \qquad N=1, 2, ...
\end{equation}		
\end{lemma}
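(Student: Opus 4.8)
The strategy is to extract an explicit expression for the pressure gradient from the discrete equation and then bound each resulting term using the moment estimates for the Euler scheme already established in Lemma~\ref{moments_uN}. First I would rewrite \eqref{full-imp1} so that the pressure is isolated: for test functions $\phi\in\WW^{1,2}$ the equation reads
\begin{align*}
\frac{T}{N}\,(\pi^l,\di\,\phi) = \big(\bu^l-\bu^{l-1},\phi\big)+\frac{T}{N}\Big[\nu\big(A^{\frac{1}{2}}\bu^l,A^{\frac{1}{2}}\phi\big)+\big\langle(\bu^l\cdot\dn\,\bu^l),\phi\big\rangle\Big]-\big(\Delta_lW,\phi\big).
\end{align*}
The key tool is the continuous LBB (inf-sup) condition on the spaces $V\times L^2_{per}$, which controls $|\nabla\pi^l|_{\LL^2}$ (equivalently $|\pi^l|$ in a suitable norm) by the supremum over $\phi$ of the right-hand side divided by $|A^{\frac{1}{2}}\phi|_{\LL^2}$. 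I would therefore estimate each of the four terms on the right by a constant times $|A^{\frac{1}{2}}\phi|_{\LL^2}$ times an appropriate norm of $\bu^l$, $\bu^{l-1}$, $\Delta_lW$, and the nonlinearity.

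The main point, and where the argument genuinely uses the earlier results, is the bilinear term $\langle(\bu^l\cdot\dn\,\bu^l),\phi\rangle=\langle B(\bu^l,\bu^l),\phi\rangle$. Using the antisymmetry \eqref{B} and the Gagliardo--Nirenberg interpolation \eqref{interpol}, one bounds this by $C\,|\bu^l|_{\LL^2}^{1/2}|A^{\frac{1}{2}}\bu^l|_{\LL^2}\,|A^{\frac{1}{2}}\bu^l|_{\LL^2}^{1/2}\,|A^{\frac{1}{2}}\phi|_{\LL^2}$, i.e. the nonlinear contribution to $|\nabla\pi^l|_{\LL^2}$ is essentially $C\,\|\bu^l\|_V^{2}$ up to the interpolation. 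After dividing out $|A^{\frac{1}{2}}\phi|_{\LL^2}$, summing the squared bound $\frac{T}{N}\sum_l|\nabla\pi^l|_{\LL^2}^2$, and applying the discrete Cauchy--Schwarz inequality, the dominant term is controlled by $\big(\frac{T}{N}\sum_l\|\bu^l\|_V^{\,\ast}\big)$ with exponents that, after raising to the power $q$ and taking expectations, reduce to moments of $\max_{l}\|\bu^l\|_V$ and of $\nu\frac{T}{N}\sum_l\|\bu^l\|_V^{q-2}|A\bu^l|_{\LL^2}^2$ appearing in \eqref{C(T,q)}. Because $\|u_0\|_V^2$ has exponential moments, \eqref{C(T,q)} holds for every exponent, in particular for the sixth-order moment $\EE(\|u_0\|_V^{6q})$ named in the statement; this is precisely why one needs the high integrability.

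I expect the main obstacle to be bookkeeping the exponents so that the bilinear term closes. The noise increment contributes $\frac{T}{N}\sum_l|\Delta_lW|^2$, whose $q$-th moment is finite and of the right order under \eqref{K0} (each $\Delta_lW$ is Gaussian with covariance $\frac{T}{N}Q$, so $\EE|A^{1/2}\Delta_lW|^2\sim\frac{T}{N}K_0$), and the linear Stokes and difference terms are immediate from \eqref{C(T,q)} and \eqref{C(T,4)}. The delicate step is that the cubic-looking nonlinear bound, after squaring and summing, must be matched against the mixed norm $\sum_l\|\bu^l\|_V^{2}|A\bu^l|_{\LL^2}$-type quantities; I would split via Young's inequality so that one factor is absorbed into $\max_l\|\bu^l\|_V$ (bounded in all moments) and the remaining factor forms the exact sum appearing in \eqref{C(T,q)} with $q-2$ replaced by the correct exponent. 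Collecting the four contributions, raising to the $q$-th power, applying Hölder across the product of a bounded-moment supremum and the controlled sum, and invoking \eqref{C(T,q)}, \eqref{C(T,4)} together with the Gaussian moment bound for the noise yields the uniform-in-$N$ constant $C$ depending on $T,q,K_0$ and $\EE(\|u_0\|_V^{6q})$, which completes the proof.
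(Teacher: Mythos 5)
Your proposal has a genuine gap, and it sits exactly at the point from which the paper's proof draws its uniformity in $N$. The paper does not use any inf-sup argument: it tests \eqref{full-imp1} (formally) with $\phi=\nabla\pi^l$. Because $\bu^l$, $\bu^{l-1}$ and $\Delta_l W$ are all divergence free (by \eqref{fu-im-div} and the fact that $W$ is $H$-valued), both $(\bu^l-\bu^{l-1},\nabla\pi^l)$ and $(\Delta_l W,\nabla\pi^l)$ vanish identically, while the pressure term itself produces $\frac{T}{N}|\nabla\pi^l|_{\LL^2}^2$. Only the Stokes term and the nonlinearity survive, and both carry their own factor $\frac{T}{N}$; after Cauchy--Schwarz, Young and Gagliardo--Nirenberg one is left with quantities such as $\nu\frac{T}{N}\sum_l|A\bu^l|_{\LL^2}^2$ and $\max_l\|\bu^l\|_V^6$, which \eqref{C(T,q)} and \eqref{C(T,4)} control uniformly in $N$ --- this is where the moment $\EE(\|u_0\|_V^{6q})$ enters.

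In your scheme, by contrast, you isolate $\frac{T}{N}(\pi^l,\di\,\phi)$ and bound the increment term $(\bu^l-\bu^{l-1},\phi)$ and the noise term $(\Delta_l W,\phi)$ as two separate, harmless-looking contributions. To recover the pressure you must then divide by the time step $k=T/N$, and this is fatal: the contribution of the noise to $\frac{T}{N}\sum_l|\nabla\pi^l|_{\LL^2}^2$ becomes $\frac{N}{T}\sum_l\|\Delta_l W\|_{V'}^2$, whose expectation is of order $N\,{\rm Tr}(A^{-1/2}QA^{-1/2})$, and likewise the increments $\bu^l-\bu^{l-1}$ are (at best) of size $\sqrt{k}$, so $\frac{N}{T}\sum_l\|\bu^l-\bu^{l-1}\|_{V'}^2$ is also of order $N$. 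Neither term is ``of the right order'' as you claim; both blow up linearly in $N$, and \eqref{C(T,4)} cannot repair this. The only way to rescue an inf-sup-type argument is to observe that the \emph{combination} $\bu^l-\bu^{l-1}-\Delta_l W$ is orthogonal to every gradient field, which is precisely the cancellation the paper exploits by choosing $\phi=\nabla\pi^l$; your plan never invokes it. A second, smaller defect: the LBB condition bounds $|\pi^l|_{L^2}$, not $|\nabla\pi^l|_{\LL^2}$. In the periodic mean-zero setting Poincar\'e gives $|\pi^l|_{L^2}\leq C\,|\nabla\pi^l|_{\LL^2}$ but not the converse, so even the norm your argument would estimate is strictly weaker than the one appearing in \eqref{pressure-V}.
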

 \begin{proof} The beginning of  proof is similar to that in \cite{CarPro}; we include it for the sake of completeness. 

Let formally $\phi=\nabla \pi^l$  in \eqref{full-imp1};  since $\Delta_l W \in V$, we have  $\big( \Delta_l W, \nabla \pi^l\big)=0$.
Using \eqref{fu-im-div}, and then Cauchy-Schwarz and Young inequalities, we deduce that a.s. for $l=1, ..., N$ 
\begin{align} \label{est-press-G}
\frac{T}{N} |\nabla \pi^l|_{\LL^2}^2  \leq & \; \frac{T}{N}\, \nu\,  |\Delta \bu^l|_{\LL^2} \, |\nabla \pi^l|_{\LL^2} +  \frac{T}{N} | (\bu^l \, . \, \nabla ) \bu^l|_{\LL^2}
\, |\nabla \pi^l|_{\LL^2}  + | \Delta_l W|_{\LL^2} |\nabla \pi^l|_{\LL^2} \nonumber \\
\leq & \; \frac{T}{4N} |\nabla \pi^l|_{\LL^2}^2 + \frac{T}{N} \nu^2 |\Delta \bu^l|_{\LL^2}^2 + \frac{T}{4N} |\nabla \pi^l|_{\LL^2}^2 
+ \frac{T}{N}  C  | \bu^l |_{\LL^2} |A^{\frac{1}{2}} \bu^l|_{\LL^2}^2 |A \bu^l|_{\LL^2}^2, 
\end{align}
where the last upper estimate is deduced from the Gagliardo-Nirenberg inequality. 
Adding these upper estimates for $l=1$ to $N$,  taking expectation and  using once more Young's inequality,
 we obtain for every $q\in [1,\infty)$ 
\begin{align*}		
\EE\Big( \Big| \sum_{l=1}^N \frac{T}{2N} |\nabla \pi^l|_{\LL^2}^2 \Big|^q\Big)  \leq &\;  C(q,\nu )
 \EE\Big( \Big| \nu \frac{T}{N} \sum_{l=1}^N |A  \bu^l|_{\LL^2}^2\Big|^q\Big)
+ C(q) \EE\Big( \Big| \nu \frac{T}{N} \sum_{l=1}^N |A  \bu^l|_{\LL^2}^2\Big|^{2q}\Big)  \\ 
& + C(q) \EE\Big( \max_{1\leq l\leq N} \|\bu^l\|_V^{6q } \Big) .
\end{align*} 
Using 
\eqref{C(T,q)} and  \eqref{C(T,4)}, we deduce \eqref{pressure-V}. 
\end{proof}

\subsection{Description of the space-time schemes} 
We suppose that for some $q\in [1,+\infty)$ 
\begin{equation} 		\label{U0}
 \EE( |u_0 - \bU^0|_{\LL^2}^{2q} ) \leq C(q) \; h^{2q}, \qquad \EE(|A^{\frac{1}{2}} \bU^0|_{\LL^2}^{2q}) \leq C(q)
\end{equation}
for some positive constant $C(q)$. As it is usual in this framework, to ease notations we let $k:=\frac{T}{N}$ denote the constant time mesh. 
\\
For general finite elements satisfying the discrete LBB condition  \eqref{LBB}, one has to change the tri-linear term
$b(\bU_1, \bU_2,\bU_3 ) = ([\bU_1 \cdot \nabla] \bU_2\, , \bU_3)$ to control the nonlinear effect
 in the presence of discretely divergence-free velocity iterates,
and thus to allow stability of the scheme. Thus, we set 
\begin{equation}			\label{tildeb}
\tilde{b}(\bU_1, \bU_2, \bU_3) := ([\bU_1 \cdot \nabla ] \bU_2\, , \, \bU_3) + \frac{1}{2} \big( [{\rm div} \bU_1]\, \bU_2 \ , \bU_3\big), \quad
\forall \bU_1, \bU_2, \bU_3\in \WW^{1,2}.
\end{equation}
Note that this trilinear term is anti-symmetric with respect to the last two variables, i.e., 
\begin{equation}		\label{anti}
\tilde{b} (\bU_1, \bU_2, \bU_3) = - \tilde{b}(\bU_1, \bU_3, \bU_2), \qquad \forall \bU_1, \bU_2, \bU_3 \in \WW^{1,2}.
\end{equation}
Therefore, 
\[ 
\tilde{b}(\bU, \bPhi, \bPhi)=0, \qquad \forall \bU, \bPhi \in \WW^{1,2}.
\] 
\medskip

\noindent {\bf  Algorithm 1.}
 {\it Let $\bU^0$ be an  ${\mathcal F}_0$-measurable, $\HH_h$-valued random variable  and suppose that $W$ is a $V$-valued
 Brownian motion defined by \eqref{def_W}.  For every $l=1, \cdots, N$,
we consider a pair of $\HH_h\times L_h$ random variables $(\bU^l, \Pi^l)$ such that for every pair $(\bPhi,\Lambda)\in \HH_h\times L_h$,
 we have a.s. }
\begin{align}		\label{algo_FE}
(\bU^l - \bU^{l-1}, \bPhi) + k \, \nu \, (A^{\frac{1}{2}} \bU^l, A^{\frac{1}{2}} \bPhi) + k \big( [\bU^{l-1} \cdot \nabla] \bU^l, \bPhi\big)
&+ \frac{k}{2} \big( [{\rm div} \, \bU^{l-1}]\, \bU^l, \bPhi\big) \nonumber  \\
- k ( \Pi^l, {\rm div }\,  \bPhi) &= ( \Delta_l W, \bPhi),\\
({\rm div }\,  \bU^l, \Lambda)&=0. \label{divU}
\end{align}

The following result, which states the existence and uniqueness of the pairs ${(\bU^l, \Pi^l)}_{l=1, \cdots, N}$ and provides moments
of the solution, 
 has been proven in \cite[Lemma 3.1]{BrCaPr}  when $\bU^0$ is deterministic (see also \cite[Lemma 4.1]{CarPro} for a random initial condition).
  The exponents are dyadic
numbers because of an induction argument which enables to deduce results when doubling the exponent. 
\begin{lemma}		\label{moments_U}
Let  \eqref{K0} holds and 
let $\bU^0$ be an  ${\mathcal F}_0$-measurable, $\HH_h$-valued  random variable.
Suppose that  $\EE\big(\|\bU^0\|_{\LL^2}^{p}\big) \leq C(p)$ for some dyadic exponent $p=2^q\in [2,\infty)$ 
and some positive constant $C(p)$ independent of $h>0$. 
 Then for every $l=1, \cdots, N$, there exists a unique pair $(\bU^l, \Pi^l)$ of ${\mathcal F}_{l k}$-measurable, 
 $\VV_h\times L_h$-valued  random variables which satisfy \eqref{algo_FE}-\eqref{divU}. Furthermore,
\begin{align}
&\EE\Big( \max_{0\leq l\leq N} |\bU^l|_{\LL^2}^{p} + \nu\, k \sum_{l=1}^N |\bU^l|_{\LL^2}^{p-2} |A^{\frac{1}{2}} \bU^l|_{\LL^2}^2 \Big) \leq 
{\bf C}_1(T,p), 
   \label{U2}\\
&\EE \Big[ \Big( k \sum_{l=1}^N |A^{\frac{1}{2}} \bU^l|^2_{\LL^2}\Big)^{\frac{p}{2}}\Big] \leq {\bf C}_2(T,p), \label{sum_grad}
\end{align}
where the constants    ${\bf C}_i(T,q)$ 
 $i=1,2$ 
 do not depend on $N$ and $h>0$.
\end{lemma}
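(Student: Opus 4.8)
The plan is to establish existence, uniqueness, and the two moment bounds \eqref{U2}--\eqref{sum_grad} following the standard induction-and-energy-estimate strategy adapted to the modified trilinear form $\tilde b$. For the existence and uniqueness of the pair $(\bU^l,\Pi^l)$ at each step, I would proceed by induction on $l$. Assuming $\bU^{l-1}$ is known and ${\mathcal F}_{(l-1)k}$-measurable with finite $\LL^2$-moments, equation \eqref{algo_FE}--\eqref{divU} defines, for the unknown $\bU^l$ restricted to the discretely divergence-free subspace $\VV_h$, a finite-dimensional problem. On $\VV_h$ the pressure term drops out (since $({\rm div}\,\bPhi,\Pi^l)=0$ for $\bPhi\in\VV_h$), so I first solve for $\bU^l\in\VV_h$ via a monotonicity/coercivity argument: the map $\bU^l\mapsto \bU^l + k\nu A\bU^l + k\tilde B(\bU^{l-1},\bU^l)$ is, thanks to the antisymmetry \eqref{anti} giving $\tilde b(\bU^{l-1},\bU^l,\bU^l)=0$, strongly monotone and coercive on the finite-dimensional space $\VV_h$, and a Brouwer-type fixed point / Browder--Minty argument yields a unique solution. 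The pressure $\Pi^l\in L_h$ is then recovered from the full equation \eqref{algo_FE} tested against $\bPhi\in\HH_h\setminus\VV_h$, using the discrete LBB condition \eqref{LBB} to guarantee its existence and uniqueness.

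For the moment bounds, the key step is the energy estimate obtained by testing \eqref{algo_FE} with $\bPhi=\bU^l$. Since $\bU^l\in\VV_h$ the pressure term vanishes, and the modified trilinear term vanishes because $\tilde b(\bU^{l-1},\bU^l,\bU^l)=0$. Using the algebraic identity $(a-b,a)=\tfrac12\big(|a|_{\LL^2}^2-|b|_{\LL^2}^2+|a-b|_{\LL^2}^2\big)$ on the time-difference term and Young's inequality on the noise pairing $(\Delta_l W,\bU^l)$, I obtain
\begin{equation*}
\tfrac12\big(|\bU^l|_{\LL^2}^2 - |\bU^{l-1}|_{\LL^2}^2 + |\bU^l-\bU^{l-1}|_{\LL^2}^2\big) + k\nu |A^{\frac12}\bU^l|_{\LL^2}^2 \leq (\Delta_l W,\bU^l).
\end{equation*}
Summing over $l$ and handling the martingale-type noise term by the Burkholder--Davis--Gundy inequality (after raising to the power $p/2$ and taking expectations) yields \eqref{sum_grad} and the case $p=2$ of \eqref{U2}. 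The higher dyadic moments $p=2^q$ follow by the induction argument on the exponent alluded to in the statement: one multiplies the per-step inequality by a suitable power $|\bU^l|_{\LL^2}^{p-2}$ and iterates, doubling the exponent at each stage, with the discrete Gronwall lemma controlling the accumulated drift.

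The main obstacle I anticipate is the careful treatment of the stochastic increments $\Delta_l W$ when raising to higher powers. The pairing $(\Delta_l W,\bU^l)$ is not adapted in the clean way a pure martingale increment would be, since $\bU^l$ depends on $\Delta_l W$ through the implicit scheme; one must therefore split $(\Delta_l W,\bU^l)=(\Delta_l W,\bU^{l-1})+(\Delta_l W,\bU^l-\bU^{l-1})$, bound the first (genuinely martingale) piece via BDG and the second by absorbing $|\bU^l-\bU^{l-1}|_{\LL^2}^2$ into the left-hand side and controlling $|\Delta_l W|_{\LL^2}^2$ by its expectation $k\,{\rm Tr}(Q)$. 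Iterating this to reach arbitrary dyadic $p$ while keeping all constants independent of both $N$ and $h$ is the technically delicate part, and it is precisely here that condition \eqref{K0} (finiteness of the trace) and the uniform-in-$h$ stability granted by working on $\VV_h$ are essential. Since the result is quoted verbatim from \cite[Lemma 3.1]{BrCaPr} and \cite[Lemma 4.1]{CarPro}, I would present the energy estimate explicitly and refer to those sources for the full induction on the exponent.
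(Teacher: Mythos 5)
Your proposal is correct and follows essentially the same route as the paper: the paper gives no proof of this lemma but quotes it from \cite[Lemma 3.1]{BrCaPr} and \cite[Lemma 4.1]{CarPro}, and your reconstruction (solvability of the per-step problem on $\VV_h$ with the pressure recovered via the discrete LBB condition, the energy estimate exploiting the antisymmetry of $\tilde{b}$, the splitting of $(\Delta_l W,\bU^l)$ into a martingale part handled by Burkholder--Davis--Gundy and a part absorbed into $|\bU^l-\bU^{l-1}|_{\LL^2}^2$, and the dyadic induction on the exponent) is precisely the argument of those references. The only remark is that the per-step problem in Algorithm 1 is linear in $\bU^l$ (the advecting velocity is $\bU^{l-1}$), so Lax--Milgram on the finite-dimensional space $\VV_h$ already gives existence and uniqueness, and no Brouwer-type fixed point argument is needed.
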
 

We can reformulate the algorithm \eqref{algo_FE}-\eqref{divU} as follows, using divergence-free test  functions (see \cite[(4.4)]{CarPro}).\\
{\bf  Algorithm 2.}
{\it   Let $W$ and $\bU^0$ be as in {\bf Algorithm 1}. We have a.s. for $l=1, \cdots, N$ }
\begin{align}			\label{algo_special_FE}
(\bU^l - \bU^{l-1}, \bPhi) +  \nu \, k\,  (A^{\frac{1}{2}} \bU^l, A^{\frac{1}{2}} \bPhi) + &k \big( [\bU^{l-1} \cdot \nabla] \bU^l, \bPhi\big)
+ \frac{k}{2} \big( [{\rm div} \bU^{l-1}]\, \bU^l, \bPhi\big) \nonumber  \\
 &= \big(  \Delta_l W, \bPhi\big), \qquad \forall \bPhi \in \VV_h .
\end{align} 
As in \cite{CarPro}, we will compare the space-time scheme $\bU^l$ and the fully implicit time scheme $\bu^l$. 
For $l=0, \cdots, N$, let $\bE^l:=\bu^l- \bU^l$. 
 Note that using \eqref{divU} we have $\bU^l \in \VV^h$ for $l=1, ..., N$, so that 
\begin{equation}	\label{E-QE}
 \bE^l-\bQ \bE^l = \bu^l-\bQ \bu^l, \quad l=1, ...,N.
 \end{equation}

\subsection{Strong speed of convergence of the space-time scheme for general finite elements}
In this section we study the $L^2(\Omega)$-speed of convergence of the difference of the full space-time scheme, that is 
$\max_{0\leq l\leq N} |u(t_l) - \bU^l|_{\LL^2}^2$, in terms of
$k$ and $h$  when $\{ (\bU^l , \Pi^l) \}_l $  solves Algorithm 1. As in \cite{CarPro} and \cite{BeMi-FE}, we at first prove upper estimates of the $L^2(\Omega)$-norm
 of the difference between the time and full schemes, that is of $\max_{0 \leq l\leq N} |\bu^l-\bU^l|_{\LL^2}$. 

\begin{theorem} \label{E-E1}  
Let \eqref{K0} hold, $u_0\in V$ and $\bU^0$ be ${\mathcal F}_0$-measurable, taking values in 
$ \HH_h$, and such that  $\EE\big( \|\bU^0\|_{{\mathbb W}^{1,2}}^{q_0}\big)<\infty$ for some some $q_0$ large enough.
 Suppose furthermore that \eqref{U0} is satisfied for this value of $q_0$. 
 Let $\{(\bU^l, \Pi^l)\}_l$ be solution of  Algorithm 1.  

  Suppose that $ {\rm Tr}(Q) <\frac{4 \, \nu^3}{13 \,  [ T\nu  \,  \bar{C}^2+ 4 \, \sigma^2 ] }$.
Then,  for   $N$ large enough and  $h\in (0,1)$  we have  
\begin{equation}		\label{strong_gene}
\EE\Big( \max_{0\leq n\leq N}  |\bE^n|_{\LL^2}^2 + k \sum_{l=1}^N |A^{\frac{1}{2}} \bE^l|_{\LL^2}^2 \Big) \leq 
C  \big[ k+ h^2 \big]. 
\end{equation} 
\end{theorem}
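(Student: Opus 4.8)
The plan is to mirror the strategy used for the pure time scheme in Theorem \ref{Loc_cv_Euler}, but now applied to the difference $\bE^l = \bu^l - \bU^l$ between the time scheme and the full space-time scheme. The crucial structural advantage is again that the noise is additive: since both $\bu^l$ and $\bU^l$ are driven by the same increment $\Delta_l W$, the stochastic term cancels in the equation for $\bE^l$, leaving a pathwise (deterministic in $\omega$) recursion to which a discrete Gronwall lemma can be applied $\omega$ by $\omega$. First I would subtract Algorithm 2 \eqref{algo_special_FE} from the reformulated time scheme \eqref{full-imp1Bis}, testing the difference against a suitable discrete divergence-free function. The subtlety here, absent in the time-scheme case, is that $\bu^l \in V$ but $\bU^l \in \VV_h \not\subset V$ in general, so one cannot simply use $\phi = \bE^l$. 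The standard remedy is to test against $\bQ \bE^l = \bQ \bu^l - \bU^l \in \VV_h$ and exploit the projection identity \eqref{E-QE}, writing $\bE^l = \bQ\bE^l + (\bu^l - \bQ\bu^l)$ and controlling the projection error $\bu^l - \bQ \bu^l$ through the finite-element estimates \eqref{I-Q_1}, \eqref{I-Q_2}, and \eqref{I-Q_L4}.

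The next step is the bilinear-term analysis, which is where most of the technical work lives. I would decompose the difference of the (modified, anti-symmetric) trilinear terms $\tilde b(\bu^{l-1},\bu^l,\cdot) - \tilde b(\bU^{l-1},\bU^l,\cdot)$ into several pieces: terms involving $\bE^{l-1}$ and $\bE^l$ themselves, terms involving the projection error $\bu^l - \bQ\bu^l$, and terms coming from the discrepancy between $b$ and $\tilde b$ (the half-divergence correction, which is nonzero precisely because $\bu^l$ is genuinely divergence-free while the discrete iterates are only discretely divergence-free). For each piece I would apply Hölder, the Gagliardo–Nirenberg interpolation \eqref{interpol}, and the Sobolev embedding \eqref{Sobolev} $\|u\|_{\LL^\infty} \le \sigma |Au|_{\LL^2}$, then absorb the gradient factors $|A^{1/2}\bE^l|_{\LL^2}^2$ into the viscous term on the left using Young's inequality with small parameters $\delta_r$, arranging $\sum_r \delta_r < 1$. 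The leftover factors should collect into either (a) a multiplicative coefficient of $|\bE^l|_{\LL^2}^2$ built from $|A^{1/2}\bu^l|_{\LL^2}^2$ and $|A\bu^l|_{\LL^2}^2$ (this becomes the Gronwall weight), or (b) an additive forcing term $Z$ of order $h^2$ (from the projection errors, controlled by the finite-element estimates and the $L^2$-in-time bound on $|A\bu^l|_{\LL^2}^2$).

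After summation over $l$ and neglecting nonnegative terms, I expect to arrive at a pathwise inequality of the shape
\begin{equation*}
\max_{1\le n \le k}|\bE^n|_{\LL^2}^2 \le Z + k\, g \sum_{j=1}^{k-1}|\bE^j|_{\LL^2}^2,
\end{equation*}
with $Z = \mathcal{O}(h^2) + \mathcal{O}(k)$ in a suitable moment sense and $g$ built from $\sup_{l}|A^{1/2}\bu^l|_{\LL^2}^2$ together with the discrete integral $k\nu\sum_l |A\bu^l|_{\LL^2}^2$. The discrete Gronwall lemma then gives the a.s. bound $\max_n |\bE^n|_{\LL^2}^2 \le Z e^{Tg}$. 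Taking expectations and applying Hölder with conjugate exponents $p,q$ close to $1$ and large, the factor $\EE[e^{pTg}]$ is finite precisely because of the refined exponential-moment estimate \eqref{exp-moments-Aul_al} for the time scheme, which controls $\exp(\beta \max_n[|A^{1/2}\bu^n|_{\LL^2}^2 + \beta\nu k\sum_l |A\bu^l|_{\LL^2}^2])$; this is exactly the reason Theorem \ref{exp-mom}(i) needed the second, stronger statement. The hypothesis $\mathrm{Tr}(Q) < 4\nu^3/(13[T\nu\bar C^2 + 4\sigma^2])$ is what guarantees that the constant multiplying $g$ falls below the admissible exponential-moment threshold $\tilde\alpha_0/2 = \nu/(2\,\mathrm{Tr}(Q))$, letting me choose $p,\delta_r$ so that $pTg$ stays integrable. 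The factor $[\EE(Z^q)]^{1/q} \le C(k + h^2)$ then follows from Lemma \ref{moments_U}, the pressure estimate \eqref{pressure-V}, and the finite-element bounds.

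The main obstacle I anticipate is the bookkeeping of the bilinear decomposition: producing the correct weight $g$ whose exponential moment matches the threshold in \eqref{exp-moments-Aul_al} rather than merely \eqref{exp-moments-ul_al}. The appearance of $|A\bu^l|_{\LL^2}^2$ (not just $|A^{1/2}\bu^l|_{\LL^2}^2$) in $g$ is forced by the $\LL^\infty$ estimates on $\bu^l$ needed to handle the projection-error terms via \eqref{Sobolev}, and it is precisely this that necessitates the tighter, and more constraining, trace bound on $Q$ appearing in the hypothesis. Getting all the $\delta_r$ and $\gamma_i$ constants to line up so that the viscous term is genuinely absorbed while keeping the coefficient of the exponent below $\tilde\alpha_0/2$ is the delicate balance at the heart of the argument.
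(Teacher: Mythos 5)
Your proposal follows essentially the same route as the paper: cancellation of the additive noise in the equation for $\bE^l$, testing against $\bQ\bE^l$ and controlling $\bu^l-\bQ\bu^l$ by \eqref{I-Q_1}--\eqref{I-Q_L4}, a decomposition of the nonlinear difference whose div-correction pieces force (via the Sobolev bound \eqref{Sobolev}) the appearance of $|A\bu^l|_{\LL^2}^2$ in the Gronwall weight, a pathwise discrete Gronwall estimate, and then H\"older in expectation with the exponential factor made finite by \eqref{exp-moments-ul_al} and the refined estimate \eqref{exp-moments-Aul_al} under the stated trace condition --- you correctly identify that this is precisely why Theorem \ref{exp-mom}(i) needs its second, stronger assertion.

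There is, however, one step that fails as written, and it is not cosmetic. You propose to subtract Algorithm 2 \eqref{algo_special_FE} from the projected time scheme \eqref{full-imp1Bis} and test against a discretely divergence-free function. But \eqref{full-imp1Bis} holds only for test functions $\phi\in V$, while $\bQ\bE^l\in\VV_h\not\subset V$ for general LBB elements --- the very obstruction you flag yourself. The paper instead subtracts Algorithm 1 \eqref{algo_FE} from the pressure formulation \eqref{full-imp1}, both valid for all $\bPhi\in\HH_h\subset\WW^{1,2}$; testing against $\bQ\bE^l$ then annihilates the discrete pressure $\Pi^l$ (since $\bQ\bE^l\in\VV_h$, $\Pi^l\in L_h$) but \emph{not} the time-scheme pressure $\pi^l$, because $\bQ\bE^l$ is only discretely divergence-free. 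The surviving term $k(\pi^l,{\rm div}\,\bQ\bE^l)$ is a genuine additional error source: it is handled by writing it as $k(\pi^l-P^0_h\pi^l,{\rm div}\,\bQ\bE^l)$, using \eqref{I-P}, and invoking the moment bound \eqref{pressure-V} for $k\sum_l|\nabla\pi^l|_{\LL^2}^2$, which produces one of the $h^2$ contributions to the forcing term $\tilde Z$. Your closing citation of \eqref{pressure-V} is therefore inconsistent with your own setup, in which no $\pi^l$ term could ever appear. A related bookkeeping point: the Gronwall weight must be kept step-dependent, $g_l$ containing $|A\bu^{l+1}|_{\LL^2}^2$ as in \eqref{gl}, so that the summed exponent $\sum_l g_l$ involves the discrete integral $k\sum_l|A\bu^l|_{\LL^2}^2$ as in \eqref{sum-gl}; replacing each step's weight by a single constant $g$ that already contains the whole discrete integral, as your displayed recursion suggests, would yield an exponent of order $N$ times that quantity and destroy the estimate.
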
    
 \begin{remark}		\label{ex_Gaussian}
The assumption \eqref{U0} on the initial condition $U_0$ of Algorithm 1 holds for every $q\in [1,+\infty)$ if $U_0$ and $u_0-U_0$ are Gaussian random variables. 
\end{remark} 
\noindent {\it Proof of Theorem \ref{E-E1}}
 Parts of the proof are similar to Section 4.3 in \cite{BeMi-FE}; they are included for the sake of completeness. 
The proof is divided in several parts.
\smallskip

\noindent {\bf Part 1: A preliminary decomposition} \\
For every $l=1, \cdots, N$ and  $\bPhi\in \HH_h$, 
\begin{align*}
(\bE^l\!-\!\bE^{l-1},   \bPhi) &+ \nu k (A^{\frac{1}{2}} \bE^l  ,  A^{\frac{1}{2}} \bPhi) + k \tilde{b}(\bu^l, \bu^l, \bPhi) 
- k \tilde{b}( \bU^{l-1}, \bU^l, \bPhi) - k \big( \pi^l\!-\!\Pi^l , {\rm div }\, \bPhi\big) = 0.
\end{align*}

Since $\bU^l\in \VV_h$ for $l=1, \cdots, N$,  we have $\bQ \bU^l=\bU^l$; in the above identity,  choose 
\[ 
\bPhi : = \bQ \bE^l = \bE^l - \big( \bu^l-\bQ \bu^l\big).
\] 
Then, since $\bQ \bE^l\in \VV_h$ and $\Pi^l\in L_h$, using \eqref{Vh} we deduce that  $\big( \Pi^l, {\rm div}\, \bQ \bE^l\big)=0$ for $l=1, \cdots, N$. 
Since $(a, a-b) = \frac{1}{2} \big( |a|_{\LL^2}^2 - |b|_{\LL^2}^2 + |a-b|_{\LL^2}^2\big)$, we deduce
\begin{align}		\label{E^m-E^(m-1)}
\frac{1}{2} &\Big[ |\bQ \bE^l|_{\LL^2}^2 - |\bQ \bE^{l-1}|_{\LL^2}^2 + |\bQ (\bE^l - \bE^{l-1}|_{\LL^2}^2 \Big] 
+ \nu k |A^{\frac{1}{2}}  \bE^l|_{\LL^2}^2 
\nonumber \\
&\quad + k \tilde{b}\big(\bu^l-\bu^{l-1}, \bu^l , \bQ \bE^l\big) +k  \tilde{b}\big(\bu^{l-1}, \bu^l ,\bQ \bE^l\big) -
k  \tilde{b}\big(\bU^{l-1}, \bU^l,  
\bQ \bE^l\big) \nonumber \\
& = \nu k \big( A^{\frac{1}{2}} \bE^l, \nabla(\bu^l - \bQ \bu^l) \big) + k (\pi^l, {\rm div}\, \bQ \bE^l) 
\end{align}

 \noindent {\bf Part 2: Intermediate results}\\
 We prove upper estimates for the quantities involving the non-linear term $\tilde{b}$ and the terms in
the right handside of  \eqref{E^m-E^(m-1)}. Unlike \cite{BeMi-FE}, we prove a.s. estimates and no expected value is computed in this step. 
\smallskip

\noindent {\bf Part 2.1: Estimate of the error term $\big|\big( A^{\frac{1}{2}} \bE^l\, ,\,  A^{\frac{1}{2}} [\bu^l-\bQ \bu^l]\big)\big|$}\\
Using  
the Cauchy-Schwarz and  Young inequalities,  and then \eqref{I-Q_1}, we obtain for $\epsilon >0$
 \begin{align*}		
 \big|  \big( A^{\frac{1}{2}} \bE^l\, ,\,  A^{\frac{1}{2}} (\bu^l-\bQ \bu^l)\big)\big|\leq & \; \big| A^{\frac{1}{2}} \bE^l \big|_{\LL^2} \; 
 \big|A^{\frac{1}{2}} (\bu^l - \bQ \bu^l)\big|_{\LL^2}
  \leq  \; \epsilon \, \big| A^{\frac{1}{2}} \bE^l \big|_{\LL^2}^2 + C( \epsilon) \, h^2 \, |A\bu^l|_{\LL^2}^2. 
 \end{align*}
This implies for $\epsilon >0$ and $n=1, \cdots, N$ 
\begin{align}		\label{E1}
& 
 \nu  k \sum_{l=1}^n 
\big|  \big( A^{\frac{1}{2}} \bE^l ,\ A^{\frac{1}{2}} (\bu^l-\bQ \bu^l)\big)\big|  
\leq  \epsilon \nu  k \sum_{l=1}^n 
  |A^{\frac{1}{2}} \bE^l|_{\LL^2}^2  + C(\nu, \epsilon)   h^2 \Big(k
\sum_{l=1}^N \nu  |A \bu^l|_{\LL^2}^2 \Big).
\end{align}

\noindent {\bf Part 2.2: Estimate of the term $\tilde{b}(\bu^l-\bu^{l-1}, \, \bu^l, \, \bQ \bE^l)$}\\
Using the antisymmetry property 
 \eqref{anti}, the H\"older inequality
 and the inequality  $\|u\|_{\LL^4}\leq C\; | A^{\frac{1}{2}} u|_{\LL^2}$ coming from the Sobolev embedding theorem, we deduce
\begin{align*}
\big| \tilde{b}(\bu^l-&\bu^{l-1}, \, \bu^l, \, \bQ \bE^l) \big| 
 \leq \big| {b}(\bu^l-\bu^{l-1},  \bQ \bE^l, \, \bu^l ) \big| + \frac{1}{2} \big| \big( [{\rm div}\, (\bu^l - \bu^{l-1})] \; \bQ \bE^l\, , \, \bu^l\big) \big|  \\
 & \leq   \, \| \bu^l - \bu^{l-1}\|_{\LL^4}  \; |A^{\frac{1}{2}} \bQ \bE^l|_{\LL^2}\; \|\bu^l\|_{\LL^4}  + \frac{1}{2}  | A^{\frac{1}{2}} (\bu^l-\bu^{l-1}|_{\LL^2} 
\|\bQ \bE^l\|_{\LL^4}\; \| \bu^l\|_{\LL^4} \\
& \leq \; \epsilon\,  \nu\; |A^{\frac{1}{2}} \bE^l|^2_{\LL^2} + C(\nu, \epsilon) \| \bu^l - \bu^{l-1}\|^2_V\; \|\bu^l\|^2_V.
\end{align*}
Therefore, 
\begin{align}		\label{tildeb(u-u)}
k \sum_{l=1}^n \big| \tilde{b}(\bu^l-\bu^{l-1}, \, \bu^l, \, \bQ \bE^l) \big|  \leq & \;  
\epsilon\,  \nu\, k  \sum_{l=1}^n |A^{\frac{1}{2}} \bE^l|_{\LL^2}^2 \nonumber\\
&+ C(\nu, \epsilon) \, k \Big( \max_{1\leq l\leq N} \|\bu^l\|^2_V \Big) \sum_{l=1}^N\| \bu^l - \bu^{l-1}\|^2_V.
\end{align}
\medskip

\noindent {\bf Part 2.3: Estimate of the term $-\tilde{b}(\bu^{l-1}, \, \bu^l, \, \bQ \bE^l) + \tilde{b}(\bU^{l-1},\, \bU^l, \, \bQ \bE^l)$}

The antisymmetry property of 
$\tilde{b}$ in 
\eqref{anti}  implies for $l=1, \cdots, N$
\begin{align} 	\label{tildeb-tildeb}
-\tilde{b}(\bu^{l-1}, \, \bu^l, \, \bQ \bE^l) + \tilde{b}(\bU^{l-1}, \, \bU^l, \, \bQ \bE^l) = \sum_{i=1}^3 \tilde{T}_i(l),
\end{align} 
where 
\begin{align*}
\tilde{T}_1(l)=& \;  \tilde{b}(\bu^{l-1},\, \bE^l,\, \bE^l-\bQ \bE^l), \quad \tilde{T}_2(l)= \tilde{b}(\bE^{l-1},\, \bE^l,\, \bQ\bE^l - \bE^l), \\
\tilde{T}_3(l)=& \;   - \; \tilde{b}(\bE^{l-1},\, \bu^{l},\, \bQ\bE^l). 
\end{align*}

 {\it Terms $\tilde{T}_1(l)$.}
The identity  \eqref{E-QE}  
implies that $\tilde{T}_1(l)=-\tilde{b}(\bu^{l-1},\, \bu^l-\bQ\bu^l,\, \bE^l)$ for $l=1, ...,N$. Using the H\"older and
Gagliardo-Nirenberg inequalities,  \eqref{interpol} and \eqref{I-Q_1}, we deduce 
\begin{align}		\label{T1-b}
\big| b(\bu^{l-1},\, \bu^l - \bQ\bu^l,  \bE^l)\big| & \leq\;  \|\bu^{l-1}\|_{\LL^4} |A^{\frac{1}{2}}(\bu^l-\bQ\bu^l)|_{\LL^2} \| \bE^l\|_{\LL^4}  \nonumber \\
&\leq \; \sqrt{\bar{C}}  \, | A^{\frac{1}{2}} \bE^l|_{\LL^2}^{\frac{1}{2}} |\bE^l|_{\LL^2}^{\frac{1}{2}} \|\bu^{l-1}\|_V\;  h \; | A\bu^l|_{\LL^2} \nonumber\\
 &\leq \; \epsilon \nu |A^{\frac{1}{2}} \bE^l|_{\LL^2}^2 + 
 |\bE^l|_{\LL^2}^2  + 
 C( \nu, \epsilon) \, h^2\, \|\bu^{l-1}\|_V^2 |A\bu^l|_{\LL^2}^2, 
\end{align}
where the last upper estimate is deduced using Young's inequality. Furthermore, using once more the Gagliardo-Nirenberg inequality,
\eqref{I-Q_L4}  and then Young's inequality, we obtain
\begin{align*}
\big| \big( [{\rm div}\, \bu^{l-1}](\bu^l-\bQ\bu^l),\, \bE^l\big)\big| \leq &\; |A^{\frac{1}{2}} \bu^{l-1}|_{\LL^2} \, \|\bu^l-\bQ\bu^l\|_{\LL^4} \|\bE^l\|_{\LL^4} \\
\leq & \;  \sqrt{\bar{C}}  \,  |A^{\frac{1}{2}} \bE^l|_{\LL^2}^{\frac{1}{2}} |\bE^l|_{\LL^2}^{\frac{1}{2}} 
 |A^{\frac{1}{2}} \bu^{l-1}|_{\LL^2}
\; h^{\frac{3}{2}} |A\bu^l|_{\LL^2} \\
\leq & \; \epsilon \nu |A^{\frac{1}{2}} \bE^l|_{\LL^2}^2 + 
 |\bE^l|_{\LL^2}^2  + 
 C( \nu, \epsilon) \, h^3\, \|\bu^{l-1}\|_{V}^2 |A\bu^l|_{\LL^2}^2. 
\end{align*}

Therefore,  if $h\in (0,1)$,   for $n=1, \cdots, N$
\begin{align}		\label{tildeT1-FEM}
 k \sum_{l=1}^n  |\tilde{T}_1(l)| \leq  &
\; 2\epsilon \nu  k \sum_{l=1}^n  |A^{\frac{1}{2}} \bE^l|_{\LL^2}^2  + 2 
  k \sum_{l=1}^n |\bE^l|_{\LL^2}^2 \nonumber \\
&  \quad  + C(\nu, \epsilon)\,   h^2 \,  \max_{0\leq l\leq N} \|\bu^{l}\|_V^2 
 \Big( \nu\,  k \sum_{l=1}^N   |A\bu^l|_{\LL^2}^2  \Big). 
\end{align}
\smallskip

{\it Terms $\tilde{T}_2(l)$.} Using once more \eqref{E-QE},  we  replace the difference $\bE^l-\bQ \bE^l$ by
$\bu^l-\bQ \bu^l$ for $l=1, ...,N$.
The Gagliardo-Nirenberg inequality, \eqref{I-Q_L4} and Young's inequality imply
\begin{align}	\label{b-tildeT2}
& \big| b(\bE^{l-1}, \, \bE^l,\, \bu^l-\bQ\bu^l)\big| \leq \; \sqrt{\bar{C}} |A^{\frac{1}{2}} \bE^{l-1}|_{\LL^2}^{\frac{1}{2}} |\bE^{l-1}|_{\LL^2}^{\frac{1}{2}} 
|A^{\frac{1}{2}} \bE^l|_{\LL^2} \|\bu^l-\bQ\bu^l\|_{\LL^4} \nonumber \\
&\; \, \leq \epsilon \nu |A^{\frac{1}{2}} \bE^{l-1}|_{\LL^2}^2 + \epsilon \nu |A^{\frac{1}{2}} \bE^l|_{\LL^2}^2 + C(\nu,\epsilon) \, h^4 \, |\bE^{l-1}|_{\LL^2}^2 
|A\bu^l|_{\LL^2}^2 |A^{\frac{1}{2}} \bu^l|_{\LL^2}^2\nonumber \\
&\; \, \leq  \epsilon \nu |A^{\frac{1}{2}} \bE^{l-1}|_{\LL^2}^2 + \epsilon \nu |A^{\frac{1}{2}} \bE^l|_{\LL^2}^2 + C(\nu,\epsilon) \, h^4 \big( |\bu^{l-1}|_{\LL^2}^2
+ |\bU^{l-1}|_{\LL^2}^2\big) |A\bu^l|_{\LL^2}^2 |A^{\frac{1}{2}} \bu^l|_{\LL^2}^2.
\end{align}
Furthermore, the H\"older and Gagliardo-Nirenberg inequalities together with \eqref{I-Q_L4} and Young's inequality yield 
\begin{align*}	
\big| \big( [{\rm div}&\, \bE^{l-1}] \, \bE^l,\, \bu^l-\bQ\bu^l\big)\big| \leq  \; \sqrt{\bar{C}}  |A^{\frac{1}{2}} \bE^{l-1}|_{\LL^2} |A^{\frac{1}{2}} \bE^l|_{\LL^2}^{\frac{1}{2}}
|\bE^l|_{\LL^2}^{\frac{1}{2}} \| \bu^l-\bQ\bu^l\|_{\LL^4} \\ 
\leq & \; \epsilon \nu |A^{\frac{1}{2}} \bE^{l-1}|_{\LL^2}^2 + \epsilon \nu |A^{\frac{1}{2}} \bE^l|_{\LL^2}^2 + C(\nu,\epsilon) \, h^4 \, |\bE^{l}|_{\LL^2}^2 
|A\bu^l|_{\LL^2}^2 |A^{\frac{1}{2}} \bu^l|_{\LL^2}^2 \\ 
\leq &\; \; \epsilon \nu |A^{\frac{1}{2}} \bE^{l-1}|_{\LL^2}^2 + \epsilon \nu |A^{\frac{1}{2}} \bE^l|_{\LL^2}^2 + C(\nu,\epsilon) \, h^4 \big( |\bu^{l}|_{\LL^2}^2
+ |\bU^{l}|_{\LL^2}^2\big) |A\bu^l|_{\LL^2}^2 |A^{\frac{1}{2}} \bu^l|_{\LL^2}^2.
\end{align*} 
This inequality and  \eqref{b-tildeT2}  yield for $n=1, ..., N$
\begin{align}			\label{tildeT2-FEM}
& k \sum_{l=1}^n  |\tilde{T}_2(l)| \leq \; 2 \epsilon \nu k\, |A^{\frac{1}{2}} \bE^0|_{\LL^2}^2
+ 4 \epsilon \nu k  \sum_{l=1}^n |A^{\frac{1}{2}} \bE^l|_{\LL^2}^2 \nonumber  \\
&\qquad + C(\nu, \epsilon) \, h^4\,  \Big(\max_{0\leq l\leq N}  \|\bu^l\|_V^4 + \max_{0\leq l\leq N} |\bU^l|_{\LL^2}^4\big] \Big) \; 
\Big( \nu\, k \sum_{l=1}^N |A\bu^l|^2_{\LL^2}\Big). 
\end{align}
\smallskip

{\it Term $\tilde{T}_3(l)$.}  
The Gagliardo-Nirenberg inequality \eqref{interpol} implies
\[ \big| b(\bE^{l-1},\, \bu^{l},\, \bQ\bE^l)\big| \leq \bar{C} |A^{\frac{1}{2}} \bE^{l-1}|_{\LL^2}^{\frac{1}{2}} |\bE^{l-1}|_{\LL^2}^{\frac{1}{2}} 
|A^{\frac{1}{2}} \bu^{l}|_{\LL^2} |A^{\frac{1}{2}} \bE^l|_{\LL^2}^{\frac{1}{2}} |\bE^l|_{\LL^2}^{\frac{1}{2}}.
\]
Using \eqref{interpol}, the H\"older and Young inequalities we obtain for $\epsilon>0$,  $l=1, ...,N$ 
\begin{align*}	
\big| b(&\bE^{l-1},\, \bu^{l},\, \bQ\bE^l)\big| \leq \,   \bar{C}  |A^{\frac{1}{2}} \bE^{l-1}|_{\LL^2}^{\frac{1}{2}} |A^{\frac{1}{2}} \bE^l|_{\LL^2}^{\frac{1}{2}} 
\Big(  |A^{\frac{1}{2}} \bu^{l}|_{\LL^2}^{\frac{1}{2}}  |\bE^l|_{\LL^2}^{\frac{1}{2}}\Big) 
\Big(   |A^{\frac{1}{2}} \bu^{l}|_{\LL^2}^{\frac{1}{2}}  |\bE^{l-1}|_{\LL^2}^{\frac{1}{2}}\Big) \\
\leq & \; \epsilon \nu |A^{\frac{1}{2}} \bE^{l-1}|_{\LL^2}^2 + \epsilon \nu |A^{\frac{1}{2}} \bE^{l}|_{\LL^2}^2 
+ \frac{  \bar{C}^2  }{2^4 \epsilon \nu}  |A^{\frac{1}{2}} \bu^{l}|_{\LL^2}^{2} 
|\bE^l|_{\LL^2}^2 + \frac{ \bar{C}^2 }{2^4 \epsilon \nu}  |A^{\frac{1}{2}} \bu^{l}|_{\LL^2}^{2}   |\bE^{l-1}|_{\LL^2}^2 .
\end{align*} 
This yields for $n=1, \cdots, N$ 
\begin{align}			\label{T3-1-N}
 k \sum_{l=1}^n  \big| b(\bE^{l-1},\, \bu^{l},\, \bQ\bE^l &)\big|  \leq   2 \epsilon  \nu k \sum_{l=1}^n 
|A^{\frac{1}{2}} \bE^l|_{\LL^2}^2  +  \frac{  \bar{C}^2 }{2^3 \epsilon \nu}  \Big(\max_{1\leq l\leq N}  
 |A^{\frac{1}{2}} \bu^{l}|_{\LL^2}^{2} \Big)  \,  k \sum_{l=1}^{n-1}
|\bE^l|_{\LL^2}^2 
 \nonumber \\
 &+ \epsilon \nu k |A^{\frac{1}{2}} \bE^0|_{\LL^2}^2 +
 C(\nu,\epsilon) \, k\,  \Big[ \max_{1\leq l\leq N} \|\bu^l\|_V^4  
 + \max_{1\leq l\leq N }|\bU^l|_{\LL^2}^4\Big]. 
\end{align}
\smallskip
On the other hand, $\big( [{\rm div}\, \bE^{l-1}]\, \bu^{l},\, \bQ\bE^l\big)=\tilde{T}_4(l) + \tilde{T}_5(l)$, where 
\[ 
\tilde{T}_4(l): = \big( [{\rm div}\, \bE^{l-1}]\, \bu^{l},\, \bQ(\bE^l-\bE^{l-1})\big) \quad \mbox{\rm and } \quad 
\tilde{T}_5(l):= \big( [{\rm div}\, \bE^{l-1}]\, \bu^{l},\, \bQ\bE^{l-1}\big).
\] 
The H\"older and Gagliardo-Nirenberg inequalities \eqref{interpol} imply  
\begin{align*}
|\tilde{T}_4(l)| &\leq \;  \sqrt{\bar{C}} \; |A^{\frac{1}{2}} \bE^{l-1}|_{\LL^2}
\| \bu^{l}\|_{\LL^4}   |A^{\frac{1}{2}} \bQ(\bE^{l}-\bE^{l-1})|_{\LL^2}^{\frac{1}{2}}  |\bQ(\bE^{l}-\bE^{l-1})|_{\LL^2}^{\frac{1}{2}}\\
&\leq \;  \frac{ \bar{C}}{\sqrt{2}  }\;  |A^{\frac{1}{2}} \bE^{l-1}|_{\LL^2}
\| \bu^{l}\|_{V}   |A^{\frac{1}{2}} (\bE^{l}-\bE^{l-1})|_{\LL^2}^{\frac{1}{2}}  |\bQ(\bE^{l}-\bE^{l-1})|_{\LL^2}^{\frac{1}{2}}.
\end{align*}
The Young inequality implies that for any $\epsilon, \epsilon_1>0$, we have for $n=1, \cdots, N$
\begin{align}		\label{tildeT4-FEM}
 k \sum_{l=1}^n  \big| \tilde{T}_4(l)\big| \leq &
\; \epsilon \nu k \sum_{l=1}^n |A^{\frac{1}{2}} \bE^l|_{\LL^2}^2 
+\epsilon_1 \sum_{l=1}^n  |\bQ(\bE^{l}-\bE^{l-1})|_{\LL^2}^2    \nonumber \\
& \quad +  C  \frac{\bar{C}^4}{\nu^2\, \epsilon^2\, \epsilon_1}    k^2 \sum_{l=1}^n \| \bu^{l}\|_{V}^4    
\Big( |A^{\frac{1}{2}} (\bU^{l}-\bU^{l-1})|_{\LL^2}^2+ |A^{\frac{1}{2}} (\bu^{l}-\bu^{l-1})|_{\LL^2}^2\Big) \nonumber \\
 \leq & \; \epsilon \nu k \sum_{l=1}^n |A^{\frac{1}{2}} \bE^l|_{\LL^2}^2  +\epsilon_1 \sum_{l=1}^n  |\bQ(\bE^{l}-\bE^{l-1})|_{\LL^2}^2  \nonumber \\
&  \quad + C(\nu, \bar{C}, \epsilon, \epsilon_1, T)\, k\, \Big[1 +  \max_{1\leq l\leq N} \| \bu^l\|_V^8 +  \Big( k\sum_{l=1}^N |A^{\frac{1}{2}} \bU^l|_{\LL^2}^2\Big)^2\Big].
\end{align}
\smallskip

Furthermore, the Cauchy-Schwarz inequality and the Sobolev embedding theorem   yield
\begin{align*}
|\tilde{T}_5(l)| &\leq |A^{\frac{1}{2}} \bE^{l-1}|_{\LL^2}|\bu^{l}|_{\LL^\infty}|\bQ\bE^{l-1}|_{\LL^2} 
\leq \sigma \, |A^{\frac{1}{2}} \bE^{l-1}|_{\LL^2}\, |A\bu^{l}|_{\LL^2}\, |\bE^{l-1}|_{\LL^2}, 
\end{align*} 
where $\sigma$ is the constant defined in \eqref{Sobolev}. 
Therefore,  given $\epsilon >0$, the Young inequality implies 
\begin{align}\label{tildeT5-FEM}
k \sum_{l=1}^n \big| \tilde{T}_5(l)\big| 
\leq \epsilon\nu k \sum_{l=1}^n  |A^{\frac{1}{2}} \bE^{l-1}|_{\LL^2}^2
+  \frac{\sigma^2}{4\epsilon \nu}  k \sum_{l=0}^{n-1}   |A\bu^{l+1}|_{\LL^2}^2\, |\bE^{l}|_{\LL^2}^2. 
\end{align}

\noindent {\bf Part  2.4: Estimate of $(\pi^l, {\rm div}\, \bE^l)$} \\
Since $\bQ \bE^l\in \VV_h$ and $P^0_h \pi^l\in L_h$, using \eqref{Vh} we deduce 
$(\pi^l , {\rm div}\, \bQ \bE^l) = (\pi^l- P^0_l \pi^l, {\rm div}\, \bQ \bE^l)$.
Therefore, the Cauchy-Schwarz and Young inequalities  coupled with  \eqref{I-P} 
 imply
\begin{align}		\label{error-pressure-W}
 k  \sum_{l=1}^n  \big| \big( \pi^l, \, {\rm div}\, \bQ \bE^l\big)\big|&\leq  \; 
   \epsilon \nu k \sum_{l=1}^n \,  |A^{\frac{1}{2}} \bE^l|_{\LL^2}^2 
 + \frac{1}{4\epsilon \nu} k \sum_{l=1}^n |\pi^l - P^0_h \pi^l|_{L^2}^2 \nonumber \\
& \leq  \epsilon \nu k \sum_{l=1}^n \,  |A^{\frac{1}{2}} \bE^l|_{\LL^2}^2 + 
 C(\nu, \epsilon )  h^2 k \sum_{l=1}^N  |A^{\frac{1}{2}} \pi^l|_{L^2}^2. 
\end{align}

\noindent {\bf Part 3: The convergence result} \\
Summing over $l$ in \eqref{E^m-E^(m-1)} and collecting  the  estimates \eqref{E1}--\eqref{error-pressure-W} proved in Part 2, given 
 $\epsilon, \epsilon_1 >0$, 
  we deduce for all $n=1, ..., N$ and $h\in (0,1)$ 
\begin{align}		\label{Q0h-E^nt}
\frac{1}{2}  |\bQ \bE^n|_{\LL^2}^2& +\Big(\frac{1}{2}-\epsilon_1\Big)\sum_{l=1}^n |\bQ (\bE^l - \bE^{l-1})|_{\LL^2}^2 
+ \nu k (1 -13\epsilon) \sum_{l=1}^n |A^{\frac{1}{2}}  \bE^l|_{\LL^2}^2 \nonumber \\
\leq  & \;  k\; \sum_{l=0}^{n-1} \Big[ 2  + \frac{\bar{C}^2}{8 \epsilon \nu} \max_{1\leq j \leq N} 
 |A^{\frac{1}{2}} \bu^{j}|_{\LL^2}^{2} 
+ \frac{ \sigma^2}{4\epsilon \nu} |A \bu^{l+1}|_{\LL^2}^2
 \Big] |\bE^l |_{\LL^2}^2  + \frac{1}{2}  |\bQ \bE^0|_{\LL^2}^2  \nonumber \\
 &\; + C(\nu, \epsilon)  \, k\,   \big[ \|u^0\|_V^2 + |A^{\frac{1}{2}} \bU^0  |_{\LL^2}^2\big]  
 +4\, k\,    \max_{1\leq l\leq N}    \big[ |\bu^l |_{\LL^2}^2 + |\bU^l |_{\LL^2}^2  \big]   \nonumber \\
&+ C(\nu, \epsilon) \, k  \, \Big(\max_{1\leq l\leq N} \|\bu^l\|^2_V \Big)  \Big( \sum_{l=1}^N\| \bu^l - \bu^{l-1}\|^2_V \Big)
 + C(\nu, \epsilon )  h^2 \Big( k \sum_{l=1}^N  |\nabla \pi^l|_{L^2}^2\Big)  \nonumber\\
&+ C(\nu,\epsilon) h^2 \Big( 1+ \max_{1\leq l\leq N} \|\bu^l\|_V^4 + \max_{1\leq l\leq N} |\bU^l |_{\LL^2}^4\Big)
\Big[ 1+  \Big( \nu\, k\, \sum_{l=1}^N |A\bu^l |_{\LL^2}^2\Big) \Big] \nonumber \\
&+ C(\nu, \epsilon, \epsilon_1) \, k \,  \Big[ 1+ \max_{1\leq l\leq N} \|\bu^l\|_V^8 + \Big( k\sum_{l=1}^N |A^{\frac{1}{2}} \bU^l|_{\LL^2}^2\Big) \Big] .
\end{align} 
On the other side, using \eqref{I-Q_2} we deduce that for any $\lambda >0$
\begin{align}		\label{bE-Q0bE} 
|\bE^n|_{\LL^2}^2&\leq (1+\lambda) |\bQ \bE^n|_{\LL^2}^2 +  \Big( 1+\frac{1}{\lambda}\Big) | \bE^n-\bQ \bE^n|_{\LL^2}^2 \nonumber \\
&\leq (1+\lambda) |\bQ \bE^n|_{\LL^2}^2 +   \Big( 1+\frac{1}{\lambda}\Big) | \bu^n-\bQ \bu^n|_{\LL^2}^2 \nonumber  \\
& \leq (1+\lambda) |\bQ \bE^n|_{\LL^2}^2 + C(\lambda) \, h^2 \,  |A^{\frac{1}{2}} \bu^l|_{\LL^2}^2  .
\end{align}

For $l=0, \cdots, N-1$ set 
\begin{equation}\label{gl}
g_l =: k \, 2\, (1+\lambda) \Big[ 2  + \frac{\bar{C}^2}{8 \epsilon \nu} \max_{1\leq j \leq N} 
 |A^{\frac{1}{2}} \bu^{j}|_{\LL^2}^{2}  +\frac{ \sigma^2} {4\epsilon \nu} |A \bu^{l+1}|_{\LL^2}^2 \Big]
\end{equation}
and 
\begin{align}\label{Zt}
\tilde{Z} := &\;  (1+\lambda)   | \bE^0|_{\LL^2}^2 
+ C(\nu, \epsilon, \lambda )  \, k\,   \big[ \|u^0\|_V^2 + |A^{\frac{1}{2}} \bU^0  |_{\LL^2}^2\big]  
 +C(\nu, \epsilon, \lambda)\,   k    \max_{1\leq l\leq N}    \big[ |\bu^l |_{\LL^2}^4 + |\bU^l |_{\LL^2}^4  \big]   \nonumber \\
&\; + C(\nu, \epsilon, \lambda ) \, k  \, \Big(\max_{1\leq l\leq N} \|\bu^l\|^2_V \Big)  \Big( \sum_{l=1}^N\| \bu^l - \bu^{l-1}\|^2_V \Big)
 + C(\nu, \epsilon ,\lambda ) \,  h^2 \Big( k \sum_{l=1}^N  |\nabla \pi^l|_{L^2}^2\Big)  \nonumber\\
&\; + C(\nu,\epsilon,\lambda ) \, h^2 \, \Big( 1+ \max_{1\leq l\leq N} \|\bu^l\|_V^4 + \max_{1\leq l\leq N} |\bU^l |_{\LL^2}^4\Big)
\Big[ 1+  \Big( \nu\, k\, \sum_{l=1}^N |A\bu^l |_{\LL^2}^2\Big) \Big] \nonumber \\
&\; + C(\nu, \epsilon, \epsilon_1,\lambda) \, k \,  \Big[ 1+ \max_{1\leq l\leq N} \|\bu^l\|_V^8 + \Big( k\sum_{l=1}^N |A^{\frac{1}{2}} \bU^l|_{\LL^2}^2\Big) \Big] 
\nonumber \\ &\; 
+ C(\lambda) \, h^2 \, \max_{1\leq l\leq N} |A^{\frac{1}{2}} \bu^l|_{\LL^2}^2 .
\end{align}
For $n=0, ..., N$ set
\begin{equation} 		\label{def_Xn}
X_n:= |\bE^n|_{\LL^2}^2 + k \, \nu\, (1-13\epsilon) \sum_{l=1}^n |A^{\frac{1}{2}} \bE^l|_{\LL^2}^2
\end{equation}
using the convention $\sum_{l=0}^L y_l=0$ if $L<0$ for any sequence $\{y_l\}_l$ of non-negative numbers. 
Combining  the estimates  \eqref{Q0h-E^nt}--\eqref{Zt} and 
 we infer 
for $\epsilon_1 = \frac{1}{2}$ and $\epsilon \in \big(0, \frac{1}{13}\big)$  
\begin{align*}
0\leq X_n\leq \Zt+ \sum_{l=0}^{n-1} g_{l}  X_l,  \qquad n=0, ..., N. 
\end{align*}
The Special Gronwall inequality proved in \cite[s5]{Holte}  implies that for $n=0, ..., N$ 
\begin{align*}
X_n \leq \Zt \, {\rm e}^{ \sum_{l=0}^{n-1} g_{l}} \leq  \Zt \, {\rm e}^{ \sum_{l=0}^{N-1} g_{l}}.
\end{align*}
Therefore, 
\[ 
\max_{0\leq n\leq N} 
X_n \leq \Zt \, {\rm e}^{ \sum_{l=0}^{N-1} g_{l}}  , 
\] 
and the H\"older inequality implies that for $p, q\in (1,\infty)$ with  $\frac{1}{p}+\frac{1}{q}=1$
\begin{align}		\label{upper_Xn}
\EE\Big( \max_{0\leq n\leq N} 
X_n  \Big)\leq \EE\Big\{ \Big[\big| \Zt\big|^q \Big]\Big\}^{1/q} \, 
\Big\{ \EE \Big[ \exp\Big( p \sum_{l=0}^{N-1} g_{l}\Big) \Big]\Big\}^{1/p} .
\end{align}
Note that 

\begin{equation}  \label{sum-gl} 
 \sum_{l=0}^{N-1} g_l = \;  4T(1+\lambda) + \frac{\bar{C}^2  (1+\lambda) T}{4\epsilon  \nu }  \max_{1\leq l\leq N}  |A^{\frac{1}{2}} \bu^{l}|_{\LL^2}^{2}  
 + \frac{ \sigma^2  (1+\lambda)}{2\epsilon  \nu^2}\, \frac{T}{N} \nu \sum_{l=1}^N |A\bu^l|_{\LL^2}^2. 
\end{equation} 

 The assumption on ${\rm Tr}(Q)$ implies 
$ \frac{13\,  {\rm Tr}(Q)}{ \nu^2} \Big( \frac{T\, \bar{C}^2 }{4}  + \frac{ \sigma^2}{\nu}\Big) <1$.  
Therefore, we may choose $\lambda\in (0,1)$ (close to 0) and $\epsilon \in \big( 0, \frac{1}{13})$ (close to $\frac{1}{13}$) such that
\[  \frac{ (1+\lambda)^2\, {\rm Tr}(Q)}{ \epsilon \nu^2} \Big( \frac{T \, \bar{C}^2 }{4}  + \frac{ \sigma^2}{\nu}\Big) <1.\]
For $\tilde{\alpha}_0$ defined by \eqref{alpha0},  set  $p_1:= \frac{4\, \nu^2\, \epsilon}{(1+\lambda)^2\, \bar{C}^2\,  T \, {\rm Tr}(Q)}= 
\frac{4\, \epsilon\, \nu }{\bar{C}^2 T (1+\lambda)^2}\,  \tilde{\alpha_0}$ 
and $p_2:= \frac{ \nu^3\, \epsilon}{ (1+\lambda)^2\, \sigma^2\,   \, {\rm Tr}(Q)}=  \frac{2\nu^2 \epsilon}{\sigma^2 (1+\lambda)^2} \, \frac{\tilde{\alpha}_0}{2}$.
 Then $p_1\,   \frac{\bar{C}^2 (1+\lambda)\, T}{4\, \epsilon\, \nu} < \tilde{\alpha}_0$ and 
 $p_2\, \frac{\sigma^2 (1+\lambda)}{\epsilon \, \nu^2} < \frac{\tilde{\alpha}_0}{2}$. Furthermore, the choice of $\lambda$ and $\epsilon$ ensures that
 $ \frac{1}{p}:= \frac{1}{p_1} + \frac{1}{p_2}<1$.

  The H\"older inequality and the upper estimates \eqref{sum-gl}, \eqref{exp-moments-ul_al} and \eqref{exp-moments-Aul_al} imply 
  \begin{align*}
  \Big\{ \EE\Big[ \exp\Big( p \sum_{l=0}^{N-1} g_l\Big) \Big] \Big\}^{\frac{1}{p}} \leq&\;   e^{4T(1+\lambda)} \,
   \Big\{ \EE\Big[ \exp\Big(p_1  \frac{(1+\lambda) \bar{C}^2 T}{4\epsilon \nu} \max_{1\leq l\leq N} 
  |A^{\frac{1}{2} \bu^{l}|_{\LL^2}^{2}} \Big) \Big] \Big\}^{\frac{1}{p_1}}\\
  & \Big\{ \EE\Big[ \exp\Big(p_2  \frac{(1+\lambda) \bar{C}^2 }{2\epsilon \nu^2} \frac{T}{N} \nu \sum_{1\leq l\leq N} |A \bu^l|_{\LL^2}^2\Big) \Big] \Big\}^{\frac{1}{p_2}}
  <\infty. 
  \end{align*} 

We next provide upper estimates of $\EE(|\tilde{Z}|^q)$, where  $p, \epsilon$ and $\lambda$ have been chosen above, and $q$ is the conjugate exponent of $p$.  
The definition of $\tilde{Z}$ given in \eqref{Zt} and the Young inequality imply  for $h\in (0,1)$ 
\begin{align*}
\EE(|\tilde{Z}|^q)\leq & \; C\,  \EE\big(|\bE^0|_{\LL^2}^{2q}\big) + C \, k^q\Big\{  \EE\big( |A^{\frac{1}{2}} \bU^0|_{\LL^2}^{2q}\big) +  1
 + \EE\Big( \max_{0\leq l\leq N} \|\bu^l\|_V^{8q} \Big) 
+ \EE\Big( \max_{1\leq l\leq N} |\bU^l|_{\LL^2}^{4q} \Big) \\
& 
\qquad + \EE\Big( \Big| \sum_{l=1}^N \| \bu^l-\bu^{l-1}\|_V^2 \Big|^{ 2q}\Big)  + \EE\Big( \Big| k \sum_{l=1}^N |A^{\frac{1}{2}} \bU|_{\LL^2}^2\Big|^q\Big) \Big\} \\
&+ C h^{2q}  \Big\{ \EE\Big( \Big| k \sum_{l=1}^N |\nabla \pi^l|_{\LL^2}^2\Big|^q\Big) + 1+\EE\Big( \max_{1\leq l\leq N} \|\bu^l\|_V^{8q} \Big)
+ \EE\Big( \max_{1\leq l\leq N} |\bU^l|_{\LL^2}^{8q}\Big) \\
&\qquad + \EE\Big( \Big| \nu k \sum_{l=1}^N |A\bu^l|_{\LL^2}^2\Big|^{2q} \Big) \Big\}. 
\end{align*}
Therefore, the inequalities \eqref{U0}, \eqref{C(T,q)}, \eqref{C(T,4)},  \eqref{U2} and \eqref{sum_grad}  imply that for $q=  2^{q_0-2}$ for $q_0$ large enough,
 we have 
\[ 
\EE\Big( \max_{0\leq n\leq N} |\bE^n|_{\LL^2}^2\Big) \leq C  \Big[ k + h^2 
+ h^2 \Big\{ \EE\Big( \Big| k \sum_{l=1}^N |\nabla \pi^l|_{\LL^2}^2\Big|^q\Big)\Big\}^{\frac{1}{q}} \Big]. 
\] 
Since $\|u_0\|_V^2$ has exponential moments, using \eqref{pressure-V} we obtain 
\begin{equation}			\label{first-speed-E}
\EE\Big( \max_{0\leq n\leq N} X_n 
\Big) \leq C  \Big( k + h^2   \Big). 
\end{equation} 
Since $\max_{0\leq n\leq N} |\bE^n|_{\LL^2}^2 \leq \max_{0\leq n\leq N} X_n$ and $k \sum_{l=1}^N |A^{\frac{1}{2}} \bE^l|_{\LL^2}^2 \leq \frac{1}{\nu (1-13\epsilon)}
\max_{0\leq n\leq N} X_n$,
we deduce \eqref{strong_gene}; this completes the proof. 
\hfill $\square$

The following $L^2(\Omega)$-speed of convergence  is the main result; it is a straightforward consequence of Theorems \ref{Loc_cv_Euler} and \ref{E-E1}. 
The conditions on ${\rm Tr}(Q)$ in Theorem \ref{E-E1} are stronger than that in Theorem \ref{Loc_cv_Euler}.
\begin{theorem}	\label{cor_FEM_gene}
Let  $u_0$, $\bU^0$ and  ${\rm Tr}(Q) $ be as in Theorem \ref{E-E1}. Then 
given any  $\eta \in (0,1) $,   we have for $N$ large enough, $k=T/N$ and $h\in (0,1)$
\begin{equation}		\label{strong_gene_2}
\EE \Big( \max_{0\leq l\leq N} |u(t_l)-\bU^l|_{\LL^2}^2 + \frac{T}{N} \sum_{l=1}^N \big|A^{\frac{1}{2}} (u(t_l)) - A^{\frac{1}{2}} \bU^l\big|_{\LL^2}^2 \Big)
\leq  C\big[ k^\eta + h^2 \big]. 
\end{equation}
\end{theorem}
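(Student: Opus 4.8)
The plan is to exploit the obvious splitting of the full error into the time-discretization error and the difference between the time scheme and the space-time scheme, both of which have already been controlled. Writing $u(t_l)-\bU^l = e_l + \bE^l$, where $e_l=u(t_l)-\bu^l$ is the error of the implicit time Euler scheme bounded in Theorem~\ref{Loc_cv_Euler} and $\bE^l=\bu^l-\bU^l$ is the quantity controlled in Theorem~\ref{E-E1}, the convexity inequality $|a+b|_{\LL^2}^2\leq 2|a|_{\LL^2}^2+2|b|_{\LL^2}^2$ gives
\[
\max_{0\leq l\leq N}|u(t_l)-\bU^l|_{\LL^2}^2 \leq 2\max_{0\leq l\leq N}|e_l|_{\LL^2}^2 + 2\max_{0\leq l\leq N}|\bE^l|_{\LL^2}^2 ,
\]
and the same inequality applied termwise inside the sum bounds $\frac{T}{N}\sum_{l=1}^N|A^{\frac{1}{2}}(u(t_l)-\bU^l)|_{\LL^2}^2$ by twice the analogous sums formed from $e_l$ and from $\bE^l$, using the linearity of $A^{\frac{1}{2}}$.

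Next I would verify that the hypotheses of \emph{both} preceding theorems are simultaneously in force. The corollary assumes the constraint of Theorem~\ref{E-E1}, namely ${\rm Tr}(Q)<\frac{4\nu^3}{13[T\nu\bar{C}^2+4\sigma^2]}$, and this bound is strictly smaller than the threshold $\frac{2\nu^2}{\bar{C}^2 T}$ required by Theorem~\ref{Loc_cv_Euler}; indeed the comparison reduces to the trivial inequality $4\nu\leq 26\nu+\frac{104\sigma^2}{\bar{C}^2 T}$. Together with the assumptions that $u_0\in V$ is deterministic and that $\bU^0$ and $u_0-\bU^0$ satisfy the integrability condition \eqref{U0} for $q_0$ large enough, this ensures that Theorem~\ref{Loc_cv_Euler} and Theorem~\ref{E-E1} both apply for $N$ large enough and $h\in(0,1)$.

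Finally I would add the two estimates after taking expectations. Theorem~\ref{Loc_cv_Euler} bounds the expectation of the $e_l$-part (both the maximum and the weighted $A^{\frac{1}{2}}$-sum) by $C(T/N)^\eta=Ck^\eta$ for any prescribed $\eta\in(0,1)$, while Theorem~\ref{E-E1} bounds the expectation of the $\bE^l$-part by $C[k+h^2]$. Summing yields
\[
\EE\Big(\max_{0\leq l\leq N}|u(t_l)-\bU^l|_{\LL^2}^2 + \frac{T}{N}\sum_{l=1}^N|A^{\frac{1}{2}}(u(t_l)-\bU^l)|_{\LL^2}^2\Big) \leq C\big(k^\eta + k + h^2\big),
\]
and since $k=T/N<1$ for $N$ large and $\eta<1$ we have $k\leq k^\eta$, so the linear term is absorbed into $k^\eta$ and the bound reduces to $C(k^\eta+h^2)$, which is precisely \eqref{strong_gene_2}.

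I do not expect any genuine obstacle here: all the analytic difficulty is carried by Theorems~\ref{Loc_cv_Euler} and \ref{E-E1}, and the present result is purely a combination of them. The only points requiring a line of justification are the compatibility of the two trace conditions on $Q$ (handled above) and the elementary absorption of $k$ into $k^\eta$ valid because $k<1$ and $\eta<1$; no further localization or Gronwall argument is needed at this stage.
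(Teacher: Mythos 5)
Your proposal is correct and is essentially the paper's own argument: the paper states this theorem as a straightforward consequence of Theorems \ref{Loc_cv_Euler} and \ref{E-E1}, noting only that the trace condition in Theorem \ref{E-E1} is stronger than that in Theorem \ref{Loc_cv_Euler}, which is exactly the compatibility check you carried out. Your write-up simply makes explicit the routine details (the splitting $u(t_l)-\bU^l=e_l+\bE^l$, the convexity inequality, and the absorption of $k$ into $k^\eta$) that the paper leaves implicit.
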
 

\subsection{Strong convergence of divergence-free finite elements}
In this section we suppose that the finite elements are divergence free, that is ${\bf V}_h \subset V$ and consider the solution $\{\bU^l\}_l$ of Algorithm 2.

We at first prove the $L^2(\Omega)$ speed of convergence of $\max_{0\leq l\leq N} |\bE^l|_{\LL^2}$ where $\bE^l= \bu^l - \bU^l$. 
\begin{theorem} \label{E-E2}
Let \eqref{K0} hold;  suppose that ${\rm Tr}\, (Q) < \frac{4\, \nu^2}{5  \bar{C}^2\,   T}$ and 
let  $\bU^0\in L^{q_0}(\Omega; \HH_h)$  for some $q_0$ large enough  be  an 
${\mathcal F}_0$-measurable random variable  such that \eqref{U0} is satisfied for $q_0$. Let 
  $\{\bU^l\}_l$ be the solution of  Algorithm 2. 
  Then,  for   $N$ large enough and  $h\in (0,1)$ we have 
  \begin{equation}		\label{strong_div-free}
\EE\Big( \max_{0\leq n\leq N}  |\bE^n|_{\LL^2}^2 +  k \sum_{l=1}^N |A^{\frac{1}{2}}  \bE^l|_{\LL^2}^2 \Big) \leq 
  C\,  \big( k+ h^2 \big). 
\end{equation} 
 \end{theorem}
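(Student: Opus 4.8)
The plan is to follow the architecture of the proof of Theorem~\ref{E-E1}, comparing the space-time iterates $\bU^l$ of Algorithm 2 with the time iterates $\bu^l$ of \eqref{full-imp1Bis}, while exploiting two genuine simplifications of the divergence-free setting $\VV_h\subset V$. Since $\bU^l\in\VV_h\subset V$ for $l\geq 1$ the iterates are truly divergence free; as $\bu^{l-1}\in V$ is divergence free as well, one has ${\rm div}\,\bE^{l-1}=0$, so that $\tilde b$ coincides with $b$ on all the relevant arguments and every contribution carrying a divergence factor (the analogues of $\tilde T_4(l)$ and $\tilde T_5(l)$ of Theorem~\ref{E-E1}, the latter being the source of the $\sigma^2$ term there) simply vanishes. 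Moreover Algorithm 2 is written with divergence-free test functions, so no pressure appears and no analogue of the estimate \eqref{error-pressure-W} nor of the bound \eqref{pressure-V} is required. This is precisely why the hypothesis on ${\rm Tr}(Q)$ here involves only $\bar C^2$ and $T$, and not $\sigma^2$.

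First I would subtract \eqref{full-imp1Bis}, tested against $\bPhi\in\VV_h\subset V$, from \eqref{algo_special_FE}, and take $\bPhi=\bQ\bE^l=\bE^l-(\bu^l-\bQ\bu^l)$ by \eqref{E-QE}. The identity $(a,a-b)=\frac12\big(|a|_{\LL^2}^2-|b|_{\LL^2}^2+|a-b|_{\LL^2}^2\big)$ then yields the telescoping structure in $|\bQ\bE^l|_{\LL^2}^2$ together with the full dissipation $\nu k|A^{\frac12}\bE^l|_{\LL^2}^2$. The nonlinear discrepancy is split as
\[
b(\bu^l,\bu^l,\bQ\bE^l)-b(\bU^{l-1},\bU^l,\bQ\bE^l)=b(\bu^l-\bu^{l-1},\bu^l,\bQ\bE^l)+\sum_{i=1}^{3}\tilde T_i(l),
\]
where $\tilde T_1,\tilde T_2,\tilde T_3$ are exactly as in \eqref{tildeb-tildeb} with $b$ in place of $\tilde b$. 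Each term is controlled pointwise in $\omega$ by the same computations as in Part 2 of Theorem~\ref{E-E1} --- the Gagliardo--Nirenberg inequality \eqref{interpol}, the H\"older and Young inequalities, and the projection bounds \eqref{I-Q_1} and \eqref{I-Q_L4} --- together with the consistency term $\nu k\,(A^{\frac12}\bE^l,A^{\frac12}(\bu^l-\bQ\bu^l))$ handled as in \eqref{E1}. Because the divergence and pressure contributions are gone, the coefficient of $\epsilon\nu|A^{\frac12}\bE^l|_{\LL^2}^2$ accumulated over all these estimates is far smaller than the $13\epsilon$ of \eqref{Q0h-E^nt}; arranging the Young inequalities carefully it can be brought down to $5\epsilon$, so that the threshold $\epsilon<\frac15$ will suffice, in accordance with the factor $5$ in the hypothesis ${\rm Tr}(Q)<\frac{4\nu^2}{5\bar C^2T}$.

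Next I would sum over $l$, use \eqref{I-Q_2} to pass from $|\bQ\bE^n|_{\LL^2}^2$ to $|\bE^n|_{\LL^2}^2$ up to a term of size $C(\lambda)h^2|A^{\frac12}\bu^n|_{\LL^2}^2$, and recast the resulting inequality in the form $0\leq X_n\leq\Zt+\sum_{l=0}^{n-1}g_lX_l$ with $X_n:=|\bE^n|_{\LL^2}^2+\nu k(1-5\epsilon)\sum_{l=1}^n|A^{\frac12}\bE^l|_{\LL^2}^2$. The decisive gain is that $g_l$ now depends only on $\max_{1\leq j\leq N}|A^{\frac12}\bu^j|_{\LL^2}^2$ and carries no $|A\bu^{l+1}|_{\LL^2}^2$ term, so after the Special Gronwall inequality of \cite{Holte} and a H\"older split as in \eqref{upper_Xn} one only needs to bound $\EE\big[\exp(p\sum_l g_l)\big]$, which reduces to the single exponential moment \eqref{exp-moments-ul_al}; the estimate \eqref{exp-moments-Aul_al} is not called upon. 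The remainder $\Zt$ is then estimated in $L^q(\Omega)$ through the moment bounds \eqref{C(T,q)}, \eqref{C(T,4)}, \eqref{U2}, \eqref{sum_grad} and the hypothesis \eqref{U0}, each relevant term carrying an explicit factor $k$ or $h^2$, which delivers the rate $k+h^2$.

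The main obstacle I anticipate is the final balancing of exponents: one must check that $\epsilon$ can be chosen in $(0,\frac15)$ and $\lambda$ close to $0$ so that the exponent $p\cdot\frac{(1+\lambda)\bar C^2T}{4\epsilon\nu}$ multiplying $\max_j|A^{\frac12}\bu^j|_{\LL^2}^2$ in $p\sum_l g_l$ stays strictly below $\tilde\alpha_0=\nu/{\rm Tr}(Q)$ (see \eqref{alpha0}) while the conjugate exponent satisfies $\frac1p<1$. Since the $|A\bu^{l+1}|_{\LL^2}^2$ contribution is absent, a single factor $p>1$ enters the H\"older split rather than the pair $(p_1,p_2)$ of Theorem~\ref{E-E1}, so the admissible range is exactly ${\rm Tr}(Q)<\frac{4\nu^2}{5\bar C^2T}$ and the balancing is strictly simpler than in the general case.
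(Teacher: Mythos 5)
Your proposal is correct and follows essentially the same route as the paper's own proof: you drop the divergence and pressure contributions (hence the terms $\tilde T_4$, $\tilde T_5$ and the $\sigma^2$ constant disappear), reuse the Part~2 estimates of Theorem~\ref{E-E1} with $b$ in place of $\tilde b$, apply the pointwise discrete Gronwall lemma with the simplified factor $\tilde g$ depending only on $\max_j |A^{\frac12}\bu^j|_{\LL^2}^2$, and close the argument with a single H\"older exponent and the exponential moment \eqref{exp-moments-ul_al}, exactly as the paper does. The only discrepancies are cosmetic (an inconsequential sign convention in the splitting of the nonlinear term, and citing \eqref{sum_grad}, which is not actually needed since the analogue of $\tilde T_4$ is absent here).
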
  
\begin{proof}
We briefly sketch the argument which is similar to that used to prove Theorem \ref{E-E1}. 

Since in this case   $\tilde{b}=b $ and ${\rm div}\, \bQ \bE^l =0$, the identity  \eqref{E^m-E^(m-1)}  can be rewritten 
\begin{align}		\label{diff-E-Bis}
\frac{1}{2} &\Big[ |\bQ \bE^l|_{\LL^2}^2 - |\bQ \bE^{l-1}|_{\LL^2}^2 + |\bQ (\bE^l - \bE^{l-1}|_{\LL^2}^2 \Big] 
+ \nu k |A^{\frac{1}{2}}  \bE^l|_{\LL^2}^2 
\nonumber \\
&\quad + k {b}\big(\bu^l-\bu^{l-1}, \bu^l , \bQ \bE^l\big) +k  {b}\big(\bu^{l-1}, \bu^l ,\bQ \bE^l\big) -
k  {b}\big(\bU^{l-1}, \bU^l,  
\bQ \bE^l\big) \nonumber \\
& = \nu k \big( A^{\frac{1}{2}}  \bE^l, A^{\frac{1}{2}} (\bu^l - \bQ \bu^l) \big) . 
\end{align} 
In the upper estimate \eqref{tildeb(u-u)} the upper bound in the right hand side is similar if in the left hand side we replace $\tilde{b}$ by $b$.

Let $T_i(l)$, $i=1,2,3$ and $l=1, ..., N$ denote the quantities defined in \eqref{tildeb-tildeb} where $\tilde{b}$ is replaced by $b$. 
Adding the upper estimates \eqref{T1-b}  and \eqref{b-tildeT2},  
and using the upper estimates \eqref{diff-E-Bis},  \eqref{E1}, \eqref{tildeb(u-u)} with $b$ instead of $\tilde{b}$, \eqref{T3-1-N}, we deduce that  
for $n=1, ..., N$ and $h\in (0,1)$ 
\begin{align}		\label{Q0h-E^nt-divfree}
\frac{1}{2} & |\bQ \bE^n|_{\LL^2}^2 
+ \nu k (1 -5\epsilon) \sum_{l=1}^n |\nabla  \bE^l|_{\LL^2}^2 
\leq  k\; \sum_{l=0}^{n-1} \Big[ 1  + \frac{\bar{C}^2}{8 \epsilon \nu} \max_{1\leq j \leq N} 
 |A^{\frac{1}{2}} \bu^{l}|_{\LL^2}^{2}  \Big] |\bE^l |_{\LL^2}^2 
 + \frac{1}{2}  |\bQ \bE^0|_{\LL^2}^2  \nonumber \\
 &\; + C(\nu,\epsilon)   \, k\,   \Big[  |\nabla \bU^0  |_{\LL^2}^2  
 +   \max_{1\leq l\leq N}    \big[ |\bu^l |_{\LL^2}^4 + |\bU^l |_{\LL^2}^4  \big]  
+  \Big(\max_{1\leq l\leq N} \|\bu^l\|^2_V \Big)  \Big( \sum_{l=1}^N\| \bu^l - \bu^{l-1}\|^2_V \Big)\Big] 
  \nonumber\\
&+ C(\nu,\epsilon)\,  h^2 \Big( 1+ \max_{1\leq l\leq N} \|\bu^l\|_V^4 + \max_{1\leq l\leq N} |\bU^l |_{\LL^2}^4\Big)
\Big[ 1+  \Big( \nu\, k\, \sum_{l=1}^N |A\bu^l |_{\LL^2}^2\Big) \Big] .
\end{align} 
For $n=0, ..., N$ set 
\[ Y_n:= |\bE^n|_{\LL^2}^2 + \nu (1-5\epsilon) k \sum_{l=1}^n |A^{\frac{1}{2}}  \bE^l|_{\LL^2}^2,\]
Using \eqref{bE-Q0bE}, we deduce that for $\epsilon \in \big(0,\frac{1}{5}\big) $ and $\lambda >0$ we have   
\[ 0\leq Y_n \leq \tilde{Y} + k \tilde{g} \sum_{l=0}^{n-1} Y_l,\quad n=0, ..., N
\] 
where 
\begin{equation}		\label{defgt}
\tilde{g}= 2\, (1+\lambda) \Big[ 1+ \frac{\bar{C}^2}{8\epsilon \nu} \max_{1\leq j\leq N} 
 |A^{\frac{1}{2}} \bu^{l}|_{\LL^2}^{2} \Big],
\end{equation}
and
\begin{align}	\label{defYt}
\tilde{Y}=&\; (1+\lambda) |\bE^0|_{\LL^2}^2 +   C(\nu,\epsilon, \lambda)   \, k\,    |\nabla \bU^0  |_{\LL^2}^2  \nonumber \\
& + C(\nu, \epsilon, \lambda) \, k\, \Big[   \max_{1\leq l\leq N}    \big[ |\bu^l |_{\LL^2}^4 + |\bU^l |_{\LL^2}^4  \big]  
+  \Big(\max_{1\leq l\leq N} \|\bu^l\|^2_V \Big)  \Big( \sum_{l=1}^N\| \bu^l - \bu^{l-1}\|^2_V \Big)\Big] 
  \nonumber\\
&+ C(\nu,\epsilon,\lambda ) h^2 \Big( 1+ \max_{1\leq l\leq N} \|\bu^l\|_V^4 + \max_{1\leq l\leq N} |\bU^l |_{\LL^2}^4\Big)
\Big[ 1+  \Big( \nu\, k\, \sum_{l=1}^N |A\bu^l |_{\LL^2}^2\Big) \Big] .
\end{align} 
Using the discrete Gronwall lemma, we deduce that a.s.  
\[  \max_{0\leq n\leq N} Y_n \leq \tilde{Y} \, e^{\sum_{l=1}^N  k \tilde{g}} = \tilde{Y}\, e^{T  \tilde{g}}.\]
Let $p,q\in (1,+\infty)$ be conjugate exponents; H\"older's inequality implies that
\[ \EE\Big( \max_{0\leq n\leq N} Y_n\Big)  \leq \big\{\EE\big( |\tilde{Y}|^q\big) \big\}^{\frac{1}{q}} \big\{ \EE\big( e^{pT\tilde{g}}\big) \big\}^{\frac{1}{p}}. \]
 Since ${\rm Tr}\, (Q) < \frac{4\, \nu^2}{5\, \bar{C}^2\, T}$, we deduce that 
$\frac{5 \bar{C}^2 T}{4\nu} < \tilde{\alpha}_0 $. Therefore,  we may choose  $\epsilon \in \big(0,
\frac{1}{5}\big) $ (close to $\frac{1}{5}$), $\lambda >0$ (close to 0)
and $p\in (1,+\infty)$ (close to 1) such that $2 (1+\lambda) \frac{\bar{C}^2 T}{8\epsilon \nu}\, p
 < \tilde{\alpha}_0$. Hence \eqref{exp-moments-ul_al}  implies that for this choice of $p$ and $\epsilon$ we have $\EE\big( e^{p\, T\tilde{g}}\big) <\infty$.


Let $q$ be the conjugate exponent of $p$; as in the proof of Theorem \ref{E-E1}, the Young inequality together with 
\eqref{U0}, \eqref{C(T,q)}, \eqref{C(T,4)}  and  \eqref{U2}  implies  $\EE\big( |\tilde{Y}|^q\big)\leq C(k+h^2)$. This concludes the proof.
\end{proof}
Theorems \ref{Loc_cv_Euler} and \ref{E-E2} prove the following $L^2(\Omega)$ rate of convergence, which is the main result of this section for divergence-free finite elements.
\begin{theorem}		\label{strong_space-time-divfree}
Let $u_0$ and  $\bU^0$  be as in Theorem \ref{E-E2}, and suppose that  ${\rm Tr}(Q) < \frac{4\, \nu^2}{5\, \bar{C}^2 \, T} $.  Then 
given any  $\eta \in (0,1) $   we have for $N$ large enough, $k=T/N$ and $h\in (0,1)$
\begin{equation}		\label{strong_gene_2-divfree}
\EE \Big( \max_{0\leq l\leq N} |u(t_l)-\bU^l|_{\LL^2}^2 + \frac{T}{N} \sum_{l=1}^N \big|A^{\frac{1}{2}}  \big(u(t_l)) -  \bU^l\big) \big|_{\LL^2}^2 \Big)
\leq  C\big[ k^\eta + h^2 \big]. 
\end{equation}
\end{theorem}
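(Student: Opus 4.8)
The plan is to observe that \eqref{strong_gene_2-divfree} is obtained by combining the two convergence estimates already established, through the elementary decomposition of the full error into the time-discretization error and the time-versus-space-time difference. Writing $e_l := u(t_l) - \bu^l$ and $\bE^l := \bu^l - \bU^l$, one has for every $l = 0, \dots, N$
\[
u(t_l) - \bU^l = e_l + \bE^l,
\]
so that the convexity inequality $|a+b|_{\LL^2}^2 \leq 2|a|_{\LL^2}^2 + 2|b|_{\LL^2}^2$, applied to both the $\LL^2$-norm and the $A^{1/2}$-norm, yields
\begin{align*}
\EE \Big( \max_{0\leq l\leq N} |u(t_l)-\bU^l|_{\LL^2}^2 &+ \frac{T}{N} \sum_{l=1}^N \big|A^{\frac{1}{2}}(u(t_l) - \bU^l)\big|_{\LL^2}^2 \Big) \\
&\leq 2\, \EE\Big( \max_{0\leq l\leq N} |e_l|_{\LL^2}^2 + \frac{T}{N} \sum_{l=1}^N |A^{\frac{1}{2}} e_l|_{\LL^2}^2 \Big) \\
&\quad + 2\, \EE\Big( \max_{0\leq l\leq N} |\bE^l|_{\LL^2}^2 + \frac{T}{N} \sum_{l=1}^N |A^{\frac{1}{2}} \bE^l|_{\LL^2}^2 \Big).
\end{align*}

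Before invoking the two theorems I would check that all their hypotheses are in force. The assumption ${\rm Tr}(Q) < \frac{4\nu^2}{5\bar{C}^2 T}$ is strictly stronger than the bound ${\rm Tr}(Q) < \frac{2\nu^2}{\bar{C}^2 T}$ required in Theorem \ref{Loc_cv_Euler}, since $\frac{4}{5} < 2$; hence both Theorem \ref{Loc_cv_Euler}, applied to the deterministic $u_0 \in V$, and Theorem \ref{E-E2}, with the stated assumptions on $\bU^0$ and \eqref{U0}, apply for $N$ large enough and $h \in (0,1)$. The first expectation on the right is then bounded by $C(T/N)^\eta = C k^\eta$ for any $\eta \in (0,1)$ by \eqref{moments1}, while the second is bounded by $C(k + h^2)$ by \eqref{strong_div-free}.

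Combining these gives the estimate $C(k^\eta + k + h^2)$. Since $\eta \in (0,1)$ and $k = T/N \in (0,1)$ for $N$ large enough, we have $k \leq k^\eta$, so the middle term is absorbed and the right-hand side reduces to $C(k^\eta + h^2)$, which is exactly \eqref{strong_gene_2-divfree}. A minor bookkeeping point is the indexing range of the maximum: Theorem \ref{Loc_cv_Euler} states its bound for $\max_{1\leq j\leq N}$, but since $e_0 = u_0 - \bu^0 = 0$ the extension to $\max_{0\leq l\leq N}$ is harmless, and the $l=0$ contribution to the full error is $|u_0 - \bU^0|_{\LL^2}^2$, already controlled by $Ch^2$ through \eqref{U0}. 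There is in fact no genuine obstacle here: all the analytical work---the pointwise Gronwall argument coupled with exponential moments for the time scheme, and the technical bilinear and pressure estimates for the space-time difference---was carried out in the proofs of Theorems \ref{Loc_cv_Euler} and \ref{E-E2}, and this final statement is a direct corollary obtained by summing the two error contributions and absorbing the lower-order $k$ term into $k^\eta$.
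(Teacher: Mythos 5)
Your proof is correct and follows essentially the same route as the paper, which obtains Theorem \ref{strong_space-time-divfree} precisely by combining Theorem \ref{Loc_cv_Euler} with Theorem \ref{E-E2} through the elementary decomposition $u(t_l)-\bU^l = e_l + \bE^l$ and absorbing the lower-order $k$ term into $k^\eta$. Your additional hypothesis checks (that ${\rm Tr}(Q) < \frac{4\nu^2}{5\bar{C}^2 T}$ implies the condition of Theorem \ref{Loc_cv_Euler}, and that the $l=0$ term is harmless since $e_0=0$) are sound and complete the argument.
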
 

\bigskip

\section{Strong convergence for a random initial condition}  \label{s-u0-random}
Throughout this section, we suppose that $u_0$ is independent of the noise $W$, and that $\EE\big( e^{\gamma_0 \|u_0\|_V^2}\big) <\infty$
for some $\gamma_0>0$. We state the  strong convergence results for the time scheme $\bu^l$ and the space-time scheme $\bU^l$.
The following theorem proves the strong speed of convergence for the time scheme; its proof is a variant of that of Theorem \ref{Loc_cv_Euler}.
\begin{theorem} \label{Loc_cv_Euler_random} 
Let \eqref{K0} hold; suppose that $u_0$ is ${\mathcal F}_0$-measurable such that \eqref{gamma0} holds.
Suppose that for some $\mu \in (0,1)$ we have  ${\rm Tr}(Q) < \mu\, \frac{2  \nu^2}{\bar{C}^2\,  T}$ and 
$\gamma_0 \geq \frac{T\, \bar{C}^2}{2\,\nu\, (1-\mu)}$.
Then, given any  $\eta \in (0, 1)$, there exists a positive constant $C$ such that
\begin{equation}				\label{moments1_alea}
\EE\Big( \max_{1\leq j \leq N}    |e_j|_{\LL^2}^2   + {\nu} \frac{T}{N}
\sum_{j=1}^N   |A^{\frac{1}{2}} e_j|_{\LL^2}^2   \Big) \leq C \Big(\frac{T}{N}\Big)^\eta 
\end{equation}
for $N$ large enough. 
\end{theorem}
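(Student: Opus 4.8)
The plan is to reproduce the proof of Theorem~\ref{Loc_cv_Euler} almost verbatim, altering only the exponential-moment step so as to accommodate the weaker integrability of a random datum. First I would note that Steps~1 and~2 of that proof are entirely pathwise: the decomposition \eqref{dif_B-B} of the bilinear term, the estimates \eqref{maj_T1}--\eqref{maj_T3}, the accumulated inequality \eqref{maj_max}, and the discrete Gronwall lemma yielding \eqref{Gronwall1} use nothing about the law of $u_0$. Hence the almost-sure bound $\max_{1\le j\le N}|e_j|_{\LL^2}^2 \le Z\, e^{Tg}$ carries over unchanged, with $g$ and $Z=Z_1+Z_2$ as defined in \eqref{g-Z}.

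The only place where the deterministic character of $u_0$ was used is in Step~3, where \eqref{exp-moments-udet} was invoked with exponent $pT\frac{\bar C^2}{2\delta_1\nu}<\tilde\alpha_0$. For a random datum I would instead invoke Theorem~\ref{th_mom_exp_u}(ii), whose admissible exponents form the window $(0,\tilde\beta_0]$ with $\tilde\beta_0:=\tilde\alpha_0\frac{\gamma_0}{\gamma_0+\tilde\alpha_0}$. The crux is therefore the elementary verification that the standing hypotheses force $\frac{T\bar C^2}{2\nu}<\tilde\beta_0$, leaving room to pick $p>1$ and $\delta_1<1$ (close to their extremes) with $pT\frac{\bar C^2}{2\delta_1\nu}<\tilde\beta_0$. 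From ${\rm Tr}(Q)<\mu\,\frac{2\nu^2}{\bar C^2 T}$ one has $\tilde\alpha_0=\frac{\nu}{{\rm Tr}(Q)}>\frac{\bar C^2 T}{2\mu\nu}$, while the hypothesis on $\gamma_0$ gives $\gamma_0\ge\frac{\bar C^2 T}{2(1-\mu)\nu}$. Since $\frac{\tilde\alpha_0\gamma_0}{\tilde\alpha_0+\gamma_0}$ is increasing in each of $\tilde\alpha_0$ and $\gamma_0$, substituting these two lower bounds and simplifying gives exactly $\tilde\beta_0>\frac{\bar C^2 T}{2\nu}$: with $a:=\frac{\bar C^2 T}{2\nu}$ the substitution $\tilde\alpha_0=\frac{a}{\mu}$, $\gamma_0=\frac{a}{1-\mu}$ makes the common factor $\frac{1}{\mu(1-\mu)}$ cancel, returning precisely $a$, and the strict inequality on $\tilde\alpha_0$ makes the final bound strict. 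This cancellation is exactly why the balance condition between the noise strength and the integrability of $u_0$ takes this particular form.

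With the exponential moment secured via Theorem~\ref{th_mom_exp_u}(ii), the remaining factor $[\EE(Z^q)]^{1/q}\le C(T/N)^\eta$ carries over without change: the bounds \eqref{maj_Z1_q} for $Z_1$ and \eqref{maj_Z2_q} for $Z_2$ rest on \eqref{bound_u}, \eqref{C(T,q)}, and the time-regularity estimates \eqref{holder-V_q}--\eqref{holder-L4_q}, all of which are already stated under assumption \eqref{gamma0}; and since $\|u_0\|_V^2$ has a finite exponential moment it has finite polynomial moments of every order, so the constants in \eqref{maj_Z1_q} remain finite for the required large dyadic exponent $q$. Combining \eqref{maj_E_sup_ej} with the modified exponential moment and $[\EE(Z^q)]^{1/q}\le C(T/N)^\eta$ yields $\EE(\max_j|e_j|_{\LL^2}^2)\le C(T/N)^\eta$, and inserting this into \eqref{maj_max} produces \eqref{moments1_alea}. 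The main obstacle is thus arithmetic rather than analytic: one must check that the two-sided constraint on $({\rm Tr}(Q),\gamma_0)$ is tuned so that the exponent demanded in \eqref{maj_E_sup_ej} lands inside the admissible interval $(0,\tilde\beta_0]$ of Theorem~\ref{th_mom_exp_u}(ii).
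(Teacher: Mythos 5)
Your proposal is correct and follows essentially the same route as the paper's own proof: reuse the pathwise Gronwall argument of Theorem \ref{Loc_cv_Euler} verbatim and replace the exponential-moment input \eqref{exp-moments-udet} for deterministic data by Theorem \ref{th_mom_exp_u}(ii), checking that the hypotheses on ${\rm Tr}(Q)$ and $\gamma_0$ force $\frac{T\bar C^2}{2\nu}<\tilde\alpha_0\frac{\gamma_0}{\gamma_0+\tilde\alpha_0}$. Your monotonicity-and-substitution verification of this inequality is arithmetically identical to the paper's computation $\frac{T\bar C^2}{2\nu}\bigl(\frac{1}{\tilde\alpha_0}+\frac{1}{\gamma_0}\bigr)<\mu+(1-\mu)=1$.
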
 
\begin{proof} Let $\tilde{\alpha}_0$ be defined by \eqref{alpha0}.
The assumptions on $\gamma_0$ and ${\rm Tr}(Q)$ imply 
\[ \frac{T\bar{C}^2}{2\nu}\; \frac{\gamma_0+\tilde{\alpha}_0}{\tilde{\alpha}_0 \gamma_0} = \frac{T\bar{C}^2}{2\nu}\,\frac{  {\rm Tr}(Q)}{\nu}
+ \frac{T\bar{C}^2}{2\nu} \; \frac{1}{\gamma_0} <  \mu + (1-\mu) =  1.\]
Therefore,  $\frac{T\, \bar{C}^2}{2\nu} < \tilde{\alpha}_0\, \frac{\gamma_0}{\gamma_0 + \tilde{\alpha}_0}$, and
 we may choose $p\in (1,+\infty)$ (close to 1)
and $\delta_1\in (0,1)$ (close to 1) such that $pT\frac{ \bar{C}^2}{2\delta_1 \nu} <\tilde{\alpha}_0\, \frac{\gamma_0}{\gamma_0 + \tilde{\alpha}_0}$. 
The integrability property \eqref{exp-moments-udet} 
implies 
\[ 
\EE \Big[ \exp\Big( pT  \frac{\bar{C}^2}{2\delta_1 \nu} \sup_{s\in [0,T] } |A^{\frac{1}{2}} u(s)|_{\LL^2}^2  \Big)\Big] =C <\infty. 
 \] 
 Using the upper estimate \eqref{maj_E_sup_ej} and the inequality \eqref{maj_Z1_q} -- \eqref{maj_Z2_q}, 
we then conclude the proof as that of Theorem \ref{Loc_cv_Euler}.
\end{proof}
\smallskip

The next result gives the strong speed of convergence of the space-time discretization for a random  initial condition. It requires some lower bound on the
exponential moment of $\|u_0\|_V^2$ and some related upper bound on the strength of the noise. 
\begin{theorem}	\label{cor_FEM_gene_alea}
Let \eqref{K0} hold and let  $u_0$  be ${\mathcal F}_0$-measurable
such that  \eqref{gamma0} is satisfied  for some $\gamma_0>0$. Let $\bU^0$ be ${\mathcal F}_0$-measurable, taking values in 
$ \HH_h$, and such that  $\EE\big( \|\bU^0\|_{{\mathbb W}^{1,2}}^{q_0}\big)<\infty$ for some some $q_0$ large enough.
 Suppose furthermore that \eqref{U0} is satisfied for this value of $q_0$. 
 Let $\{(\bU^l, \Pi^l)\}_l$ be solution of  Algorithm 1.  
 Suppose that  for some $\mu \in (0,1)$, 
 $${\rm Tr}(Q) < \mu \frac{4\, \nu^3}{13  \, \big( T\, \nu\, \bar{C}^2    + 4 \, \sigma^2 \big)} $$  
 and
$$\gamma_0 \geq \frac{13 \, ( T\, \nu\, \bar{C}^2   + 2 \sigma^2) }{4\, \nu^2\, (1-\mu)}.$$
  
\noindent Then,  for  any $\eta\in (0,1)$, there exists a positive constant $C$ such that   
\begin{equation}		\label{strong_gene_2_alea}
\EE \Big( \max_{0\leq l\leq N} |u(t_l)-\bU^l|_{\LL^2}^2 + \frac{T}{N} \sum_{l=1}^N \big|A^{\frac{1}{2}} (u(t_l)) - A^{\frac{1}{2}} \bU^l\big|_{\LL^2}^2 \Big)
\leq  C\big[ k^\eta + h^2 \big]. 
\end{equation}
 for $N$ large enough and  $h\in (0,1)$.
\end{theorem}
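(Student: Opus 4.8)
The plan is to obtain \eqref{strong_gene_2_alea} from the decomposition $u(t_l)-\bU^l=e_l+\bE^l$, where $e_l=u(t_l)-\bu^l$ is the error of the time Euler scheme and $\bE^l=\bu^l-\bU^l$ is the difference between the time and space-time schemes. Since $|u(t_l)-\bU^l|_{\LL^2}^2\le 2|e_l|_{\LL^2}^2+2|\bE^l|_{\LL^2}^2$, and likewise for the $A^{\frac{1}{2}}$-norms, it suffices to estimate the two errors separately and to add the bounds, using $k\le k^\eta$ for $\eta\in(0,1)$ and $N$ large.

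First I would bound $e_l$ by invoking Theorem \ref{Loc_cv_Euler_random}. Its hypotheses follow from the present ones with the same $\mu$: since $\frac{4\nu^3}{13(T\nu\bar{C}^2+4\sigma^2)}\le\frac{2\nu^2}{\bar{C}^2T}$ and $\frac{13(T\nu\bar{C}^2+2\sigma^2)}{4\nu^2}\ge\frac{T\bar{C}^2}{2\nu}$, the assumed bound on ${\rm Tr}(Q)$ and lower bound on $\gamma_0$ are stronger than those required by Theorem \ref{Loc_cv_Euler_random}. That theorem then gives $\EE(\max_j|e_j|_{\LL^2}^2+\nu k\sum_{j=1}^N|A^{\frac{1}{2}}e_j|_{\LL^2}^2)\le Ck^\eta$.

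Next I would control $\bE^l$ by repeating the proof of Theorem \ref{E-E1} verbatim up to the H\"older splitting of $\{\EE[\exp(p\sum_l g_l)]\}^{1/p}$. The almost-sure estimates of Part 2, the discrete Gronwall step, and the moment bound on $\EE(|\tilde{Z}|^q)$ via \eqref{U0}, \eqref{C(T,q)}, \eqref{C(T,4)}, \eqref{U2} and \eqref{sum_grad} are all insensitive to the randomness of $u_0$. The only change is that the exponential factor must now be bounded by the \emph{random} exponential moments of Theorem \ref{exp-mom}(ii): \eqref{exp-moments-ul_al} holds for $\alpha<\tilde{\beta}_0$ and \eqref{exp-moments-Aul_al} for $\beta<\tilde{\beta}_1$, in place of $\tilde{\alpha}_0$ and $\tilde{\alpha}_0/2$. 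Writing $A'=\frac{\bar{C}^2T}{4\epsilon\nu}$ and $B'=\frac{\sigma^2}{2\epsilon\nu^2}$ for the coefficients of $\max_l|A^{\frac{1}{2}}\bu^l|_{\LL^2}^2$ and of $\nu k\sum_l|A\bu^l|_{\LL^2}^2$ in \eqref{sum-gl} (at $\lambda=0$), I would choose $p_1$ just below $\tilde{\beta}_0/A'$ and $p_2$ just below $\tilde{\beta}_1/B'$, so that both exponential moments are finite and $\frac1p=\frac1{p_1}+\frac1{p_2}$ tends to $\frac{A'}{\tilde{\beta}_0}+\frac{B'}{\tilde{\beta}_1}$.

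The crux is to verify that this limiting value is $<1$, which is exactly where the two hypotheses enter. Using $\frac1{\tilde{\beta}_0}=\frac1{\tilde{\alpha}_0}+\frac1{\gamma_0}$, $\frac1{\tilde{\beta}_1}=\frac2{\tilde{\alpha}_0}+\frac1{\gamma_0}$ and $\frac1{\tilde{\alpha}_0}=\frac{{\rm Tr}(Q)}{\nu}$, the sum $\frac{A'}{\tilde{\beta}_0}+\frac{B'}{\tilde{\beta}_1}$ splits (as $\epsilon\to\frac1{13}$) into a ${\rm Tr}(Q)$-part equal to $\frac{13\,{\rm Tr}(Q)}{4\nu^3}(\nu\bar{C}^2T+4\sigma^2)$ and a $\gamma_0$-part equal to $\frac{13}{4\nu^2\gamma_0}(\nu\bar{C}^2T+2\sigma^2)$. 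The assumption ${\rm Tr}(Q)<\mu\frac{4\nu^3}{13(T\nu\bar{C}^2+4\sigma^2)}$ makes the first part $<\mu$, while $\gamma_0\ge\frac{13(T\nu\bar{C}^2+2\sigma^2)}{4\nu^2(1-\mu)}$ makes the second part $\le 1-\mu$; their sum is $<1$, so $p>1$ and the H\"older argument closes as in Theorem \ref{E-E1}, yielding $\EE(\max_n|\bE^n|_{\LL^2}^2+k\sum_l|A^{\frac{1}{2}}\bE^l|_{\LL^2}^2)\le C(k+h^2)$. Adding the two estimates gives \eqref{strong_gene_2_alea}. The main obstacle is precisely this calibration: one must track the weights $\frac1{\tilde{\alpha}_0}$ versus $\frac2{\tilde{\alpha}_0}$ (which produce $4\sigma^2$ versus $2\sigma^2$ in the two hypotheses) so that the trace condition governs the ${\rm Tr}(Q)$-part and the $\gamma_0$ condition governs the rest; everything else is a mechanical transcription of the deterministic proofs.
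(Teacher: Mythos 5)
Your proposal is correct and follows essentially the same route as the paper: reduce \eqref{strong_gene_2_alea} to Theorem \ref{Loc_cv_Euler_random} plus a random-initial-condition version of Theorem \ref{E-E1}, re-run that proof up to the H\"older splitting of $\EE[\exp(p\sum_l g_l)]$, and replace the thresholds $\tilde{\alpha}_0$, $\tilde{\alpha}_0/2$ by $\tilde{\beta}_0$, $\tilde{\beta}_1$ from Theorem \ref{exp-mom}(ii); your calibration $\frac{A'}{\tilde{\beta}_0}+\frac{B'}{\tilde{\beta}_1}=\frac{13\,{\rm Tr}(Q)}{4\nu^3}(\nu\bar{C}^2T+4\sigma^2)+\frac{13}{4\nu^2\gamma_0}(\nu\bar{C}^2T+2\sigma^2)<\mu+(1-\mu)=1$ is exactly the computation the paper performs with $\tilde{\beta}_0^{-1}=\tilde{\alpha}_0^{-1}+\gamma_0^{-1}$ and $\tilde{\beta}_1^{-1}=2\tilde{\alpha}_0^{-1}+\gamma_0^{-1}$. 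The only cosmetic difference is that you explicitly verify the hypotheses of Theorem \ref{Loc_cv_Euler_random} are implied by the present ones (the paper cites Theorem \ref{Loc_cv_Euler} for this step, which in context should indeed be the random-initial-condition version you use).
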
 
\begin{proof} As in Section \ref{s4}, we at first prove that under the hypotheses of this theorem, we have for $\bE^n=\bu^n-\bU^n$,
\begin{equation}		\label{strong_gene_alea}
\EE\Big( \max_{0\leq n\leq N}  |\bE^n|_{\LL^2}^2 + k \sum_{l=1}^N |A^{\frac{1}{2}} \bE^l|_{\LL^2}^2 \Big) \leq 
C  \big[ k+ h^2 \big]. 
\end{equation} 
The upper estimate \eqref{strong_gene_2_alea} is a consequence of \eqref{strong_gene_alea} and Theorem \ref{Loc_cv_Euler}.

The proof is similar to that of Theorem \ref{E-E1}, and we start with \eqref{upper_Xn}, where $\sum_{l=0}^{N-1} g_l$ is defined in \eqref{sum-gl}
for $\epsilon \in (0, \frac{1}{13})$.

The hypotheses on $\gamma_0$ and ${\rm Tr}(Q)$ imply for $\tilde{\alpha}_0=\frac{\nu}{{\rm Tr}(Q)}$,
 $\tilde{\beta}_0=\frac{\tilde{\alpha}_0
\gamma_0}{\gamma_0+\tilde{\alpha}_0}$ and $\tilde{\beta}_1=\frac{\tilde{\alpha}_0 \gamma_0}{2\gamma_0+\tilde{\alpha}_0}$, 
\begin{align*}
\frac{13}{4\, \nu^2 } \Big(  \frac{\bar{C}^2\,T\, \nu}{\tilde{\beta}_0} &+ \frac{2\sigma^2}{\tilde{\beta}_1}\Big) = 
\frac{13}{4\, \nu^2 }  \Big( \frac{\bar{C}^2\, T\, \nu (\tilde{\alpha}_0+\gamma_0)}{\gamma_0\, \tilde{\alpha}_0} 
+ \frac{2\, \sigma^2 \big( 2\gamma_0+\tilde{\alpha}_0)}{\gamma_0\tilde{\alpha}_0}\Big) \\
=& 
\frac{13}{4\, \nu^2 } \times  \frac{\bar{C}^2\, T\, \nu +2\, \sigma^2 }{\gamma_0} + \frac{13}{4\, \nu^2 } \times  \frac{(\bar{C}^2 T\, \nu+4\sigma^2)\,  {\rm Tr}(Q)}{\nu}
<  (1-\mu) + \mu =1.
\end{align*}
Let $\lambda \in (0,1)$ (close to 0) and $\epsilon \in (0, \frac{1}{13})$ (close to $\frac{1}{13}$), such that 
for  $p_1=\frac{4 \epsilon \nu \tilde{\beta_0}}{\bar{C}^2 T (1+\lambda)^2}$  and $p_2=\frac{2\epsilon \nu^2 \tilde{\beta}_1}{\sigma^2 (1+\lambda)^2}$,
we have
\[ \frac{1}{p}:= \frac{1}{p_1} + \frac{1}{p_2} = \frac{(1+\lambda)^2}{\epsilon\, 4\, \nu^2}\,  \Big( \frac{\bar{C}^2\, T\, \nu}{\tilde{\beta}_0} + \frac{2\, \sigma^2}{\tilde{\beta}_1}
\Big)  <1.\]  
 Furthermore, 
  $p_1 \frac{(1+\lambda) \bar{C}^2 T}{4\epsilon \nu}=\frac{\tilde{\beta}_0}{1+\lambda}<\tilde{\beta}_0$ and 
  $p_2 \frac{(1+\lambda) \sigma^2}{2 \epsilon \nu^2 }= \frac{\tilde{\beta}_1}  {1+\lambda} <\tilde{\beta}_1$. Therefore,  Theorem \ref{exp-mom} (ii),
  \eqref{exp-moments-ul_al} and \eqref{exp-moments-Aul_al}
  for a random initial condition $u_0$ and the H\"older inequality yield $\EE\big( p \sum_{l=0}^{N-1} g_l\big) <\infty$. 
  
 Let $q=\frac{p}{p-1}$ denote the conjugate exponent of $p$; the end of the proof of Theorem \ref{E-E1} concludes the proof of \eqref{strong_gene_alea},
 and hence that of the Theorem. 
 \end{proof} 
\smallskip

We finally give an analog of Corollary \ref{strong_space-time-divfree} for divergence-free finite elements when the initial condition $u_0$ is random.

\begin{theorem}
Suppose that the finite elements are divergence free, that is ${\bf V}_h \subset V$. Let  \eqref{K0} hold,  and $u_0$ be ${\mathcal F}_0$-measurable and
satisfying  \eqref{gamma0}
for some positive constant $\gamma_0$.   Consider the solution $\{\bU^l\}_l$ of Algorithm 2, 
 and suppose that for some constant $\mu \in (0,1)$ we have  ${\rm Tr}(Q) < \mu\, \frac{4\, \nu^2}{5\, \bar{C}^2\, T}$ and 
$\gamma_0\geq \frac{5 \, \bar{C}^2\,  T}{4\, \nu\,  (1-\mu)} $.  
  Then for any $\eta\in (0,1)$ we have for $N$ large enough, $k=T/N$ and $h\in (0,1)$
\begin{equation}
\EE \Big( \max_{0\leq l\leq N} |u(t_l)-\bU^l|_{\LL^2}^2 + \frac{T}{N} \sum_{l=1}^N \big|A^{\frac{1}{2}}  \big(u(t_l)) -  \bU^l\big) \big|_{\LL^2}^2 \Big)
\leq  C\big[ k^\eta + h^2 \big]. 
\end{equation}
\end{theorem}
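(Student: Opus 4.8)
The plan is to obtain the fully-discrete rate by combining, through the triangle inequality, the convergence of the time scheme (already available for a random initial condition) with a bound on the difference between the time scheme and the space-time scheme. Writing
$$u(t_l)-\bU^l = \big(u(t_l)-\bu^l\big) + \big(\bu^l-\bU^l\big) = e_l + \bE^l,$$
it suffices to prove two facts. First, the analog of \eqref{strong_div-free} for a random $u_0$, namely $\EE\big(\max_n|\bE^n|_{\LL^2}^2 + k\sum_l|A^{\frac{1}{2}}\bE^l|_{\LL^2}^2\big)\le C(k+h^2)$. Second, that the time-scheme error satisfies \eqref{moments1_alea}. The stated conclusion then follows from $|u(t_l)-\bU^l|_{\LL^2}^2 \le 2|e_l|_{\LL^2}^2+2|\bE^l|_{\LL^2}^2$ together with $k\le k^\eta$ for $k\in(0,1)$ and $\eta\in(0,1)$, which absorbs the $k$ coming from the $\bE^l$ bound into $k^\eta$.

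For the first fact I would reproduce the proof of Theorem \ref{E-E2} essentially verbatim: since the finite elements are divergence free the decomposition \eqref{diff-E-Bis} holds, the pathwise bilinear estimates leading to \eqref{Q0h-E^nt-divfree} do not use determinism of $u_0$, and the discrete Gronwall lemma again yields a.s. $\max_n Y_n\le \tilde{Y}\,e^{T\tilde{g}}$ with $\tilde{g}$ and $\tilde{Y}$ as in \eqref{defgt}--\eqref{defYt}. The sole place where the deterministic hypothesis entered was the final step $\EE(e^{pT\tilde{g}})<\infty$, obtained there from Theorem \ref{exp-mom}(i); here I would instead invoke Theorem \ref{exp-mom}(ii). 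The companion factor $\EE(|\tilde{Y}|^q)\le C(k+h^2)$, with $q$ the conjugate exponent of $p$, is produced exactly as in Theorem \ref{E-E2} via Young's inequality and the moment bounds \eqref{U0}, \eqref{C(T,q)}, \eqref{C(T,4)}, \eqref{U2}, all of which remain valid because $\|u_0\|_V^2$ has a finite exponential moment and hence all finite polynomial moments.

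The crux is the balance of constants. From \eqref{defgt} the exponent $pT\tilde{g}$ carries the factor $p\,\frac{(1+\lambda)\bar{C}^2 T}{4\epsilon\nu}$ in front of $\max_l|A^{\frac{1}{2}}\bu^l|_{\LL^2}^2$, and \eqref{exp-moments-ul_al} holds for a random initial condition exactly when this factor stays below $\tilde{\beta}_0=\tilde{\alpha}_0\,\frac{\gamma_0}{\gamma_0+\tilde{\alpha}_0}$. The hypotheses on ${\rm Tr}(Q)$ and $\gamma_0$ are what guarantee this: writing $\tilde{\alpha}_0=\nu/{\rm Tr}(Q)$ as in \eqref{alpha0}, one checks
$$\frac{5\bar{C}^2 T}{4\nu}\Big(\frac{1}{\gamma_0}+\frac{1}{\tilde{\alpha}_0}\Big)=\frac{5\bar{C}^2 T}{4\nu\gamma_0}+\frac{5\bar{C}^2 T\,{\rm Tr}(Q)}{4\nu^2}<(1-\mu)+\mu=1,$$
so that $\frac{5\bar{C}^2 T}{4\nu}<\tilde{\beta}_0$. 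Choosing $\epsilon\in(0,\frac{1}{5})$ close to $\frac{1}{5}$, $\lambda>0$ close to $0$ and $p\in(1,\infty)$ close to $1$ then yields $p\,\frac{(1+\lambda)\bar{C}^2 T}{4\epsilon\nu}<\tilde{\beta}_0$ and hence $\EE(e^{pT\tilde{g}})<\infty$ by Theorem \ref{exp-mom}(ii). Note that, unlike in the general LBB case, no $\sigma^2$ term and no exponential moment of $k\nu\sum_l|A\bu^l|_{\LL^2}^2$ are needed here, which is why a single condition on $\gamma_0$ suffices.

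For the second fact the same bookkeeping shows the hypotheses of Theorem \ref{Loc_cv_Euler_random} are satisfied: since $\frac{T\bar{C}^2}{2\nu}\big(\frac{1}{\gamma_0}+\frac{1}{\tilde{\alpha}_0}\big)<\frac{2}{5}(1-\mu)+\frac{2}{5}\mu=\frac{2}{5}<1$, one has $\frac{T\bar{C}^2}{2\nu}<\tilde{\beta}_0$, which is all that drives \eqref{moments1_alea}. Combining the two estimates finishes the proof. I expect the only genuine difficulty to be the verification of the constant balance displayed above; the remainder is a transcription of the divergence-free deterministic argument of Theorem \ref{E-E2} with Theorem \ref{exp-mom}(i) replaced by Theorem \ref{exp-mom}(ii).
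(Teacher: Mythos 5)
Your proposal is correct and follows essentially the same route as the paper: reduce to the bound $\EE\big(\max_n|\bE^n|_{\LL^2}^2+k\sum_l|A^{\frac{1}{2}}\bE^l|_{\LL^2}^2\big)\le C(k+h^2)$ via Theorem \ref{Loc_cv_Euler_random}, then rerun the proof of Theorem \ref{E-E2} with Theorem \ref{exp-mom}(ii) in place of part (i), the admissibility of the exponents resting on exactly the computation $\frac{5\bar{C}^2 T}{4\nu\gamma_0}+\frac{5\bar{C}^2 T\,{\rm Tr}(Q)}{4\nu^2}<(1-\mu)+\mu=1$ that the paper uses. Your verification of the hypotheses of Theorem \ref{Loc_cv_Euler_random} and the observation that only \eqref{exp-moments-ul_al} (and not the $\tilde{\beta}_1$-type moment \eqref{exp-moments-Aul_al}) is needed in the divergence-free case are both accurate.
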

\begin{proof}
Using Theorem \ref{Loc_cv_Euler_random}, it is sufficient to prove that for large $N$ and $h\in (0,1)$, 
\[\EE\Big( \max_{0\leq n\leq N}  |\bE^n|_{\LL^2}^2 +  k \sum_{l=1}^N |A^{\frac{1}{2}}  \bE^l|_{\LL^2}^2 \Big) \leq 
  C\,  \big( k+ h^2 \big). 
\]
The proof is similar to that of Theorem \ref{E-E2}.   The hypotheses on $\gamma_0$ and ${\rm Tr}(Q)$ imply  
\[ \frac{5\, T\, \bar{C}^2}{4\nu}\; \frac{\tilde{\alpha}_0+\gamma_0}{\tilde{\alpha}_0\gamma_0} = \frac{5\, T\, \bar{C}^2} {4\nu \gamma_0} +
\frac{5\, T\,\bar{C}^2\,    {\rm Tr}(Q)}{4\nu^2} < (1-\mu) + \mu =1.
\]
Therefore, we may choose $\lambda \in (0,1)$ (close to 0), $\epsilon \in (0, \frac{1}{5})$ (close to $\frac{1}{5}$) and $p\in (1,\infty)$ (close to 1) such that
$p\, (1+\lambda)\, \frac{\bar{C}^2\, T}{4\, \epsilon\, \nu} < \frac{\tilde{\alpha}_0 \gamma_0}{\gamma_0+\tilde{\alpha}_0}$. For this choice of $p$, using
\eqref{exp-moments-ul_al} in Theorem \ref{exp-mom} for a random initial condition, we deduce that $\EE\big( \exp(pTg)\big)<\infty$. 
We then complete the proof as in the argument used
at the end of the proof of Theorem \ref{E-E2} with the conjugate exponent $q=\frac{p}{p-1}$. 
\end{proof}

\section{Proofs of the time-regularity  of $u$} \label{s-proofs-time} In this section, we give the proofs of Lemmas \ref{Holder-L2} and \ref{holder-q}. They
rely on semi-groups arguments. 

Let $\{S(t)\}_{ t\geq 0}$ be the semi-group generated by $-\nu A$, that is $S(t)=e^{-\nu t A}$. The two first upper estimates of the following lemma are classical 
 (see e.g. \cite{CarPro}, Lemma 2.2). The last one describes the link between the Stokes operator and the bilinear term B (see Sobolevski or Giga-Miyakama
 \cite{GiMi}). 

 \begin{lemma} 		\label{semi-group}
 (i)  For every $b>0$,  there exist  positive constants $C(b) $ and $\tilde{C}(b)$ such that for every $t>0$ and $k=0,1$ 
 \begin{align}
  \| A^b\, S(t) \|_{{\mathcal L}( \LL^2, \LL^2)}
  & \leq  C(b)\, t^{-b},		\label{AS}\\
  \| A^{-b}\, \big( {\rm Id}\, - S(t)\big)  \|_{{\mathcal L}( \LL^2, \LL^2)} 
  & \leq  \tilde{C}(b)\, t^{b}.		\label{A(I-S)}
  \end{align} 
    (ii) For $u,v\in V$ we have  for any $\delta \in \big(0,1\big)$ 
  \begin{align}		\label{A.B}
  \big| A^{-\delta} B(u,v) \big|_{\LL^2} \leq C \big| A^{\frac{1}{2}} u\big|_{\LL^2}\, \big| A^{\frac{1}{2}} v\big|_{\LL^2}. 
  \end{align} 
 \end{lemma}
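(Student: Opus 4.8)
The plan is to prove (i) by the spectral calculus for the self-adjoint positive operator $A$, and (ii) by duality combined with H\"older's inequality and the Sobolev embeddings on the $2$-dimensional torus $D$.

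\emph{Part (i).} Since $\{\zeta_j\}_{j\ge 1}$ is an orthonormal basis of $H$ consisting of eigenvectors of $A$ with $0<\lambda_1\le\lambda_2\le\cdots$, the operators $S(t)=e^{-\nu tA}$, $A^b$ and $A^{-b}$ are simultaneously diagonalized, and for $w=\sum_{j\ge1}w_j\zeta_j\in H$ Parseval's identity gives
\begin{equation*}
\|A^bS(t)\|_{{\mathcal L}(\LL^2,\LL^2)}=\sup_{j\ge1}\lambda_j^b\,e^{-\nu t\lambda_j},\qquad
\|A^{-b}({\rm Id}-S(t))\|_{{\mathcal L}(\LL^2,\LL^2)}=\sup_{j\ge1}\lambda_j^{-b}\big(1-e^{-\nu t\lambda_j}\big).
\end{equation*}
For the first one I would use the elementary bound $x^b e^{-x}\le (b/e)^b$ (maximum at $x=b$); taking $x=\nu t\lambda_j$ yields $\lambda_j^b e^{-\nu t\lambda_j}\le (b/e)^b(\nu t)^{-b}$, which is \eqref{AS} with $C(b)=(b/e)^b\nu^{-b}$, for every $b>0$. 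For the second one, in the range $b\in(0,1]$ that is used, one has $1-e^{-x}\le x\le x^b$ for $0\le x\le1$ and $1-e^{-x}\le 1\le x^b$ for $x\ge1$, hence $1-e^{-x}\le x^b$ for all $x\ge0$; with $x=\nu t\lambda_j$ this gives $\lambda_j^{-b}(1-e^{-\nu t\lambda_j})\le(\nu t)^b$, i.e. \eqref{A(I-S)} with $\tilde C(b)=\nu^b$.

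\emph{Part (ii).} Here I would argue by duality. The operator $A^{-\delta}$ maps $H$ into ${\rm Dom}(A^\delta)=\WW^{2\delta,2}\cap H$ with $|A^{-\delta}w|_{\WW^{2\delta,2}}\le C|w|_{\LL^2}$, so $w\mapsto b(u,v,A^{-\delta}w)$ is a bounded functional on $H$ and $A^{-\delta}B(u,v)$ is its Riesz representative; thus
\begin{equation*}
|A^{-\delta}B(u,v)|_{\LL^2}=\sup_{|w|_{\LL^2}\le1}\big|b\big(u,v,A^{-\delta}w\big)\big|,\qquad b(u,v,\phi)=\int_D(u\cdot\nabla v)\cdot\phi\,dx .
\end{equation*}
Applying H\"older's inequality with exponents $(p_1,2,p_3)$ such that $\frac1{p_1}+\frac12+\frac1{p_3}=1$ gives
\begin{equation*}
\big|b(u,v,A^{-\delta}w)\big|\le |u|_{\LL^{p_1}}\,|\nabla v|_{\LL^2}\,|A^{-\delta}w|_{\LL^{p_3}}.
\end{equation*}
I would then use $|\nabla v|_{\LL^2}=|A^{1/2}v|_{\LL^2}$, the embedding $\WW^{1,2}\hookrightarrow\LL^{p_1}$ (valid for every finite $p_1$ in dimension $2$, so $|u|_{\LL^{p_1}}\le C|A^{1/2}u|_{\LL^2}$), and $\WW^{2\delta,2}\hookrightarrow\LL^{p_3}$ together with the bound above for $|A^{-\delta}w|_{\WW^{2\delta,2}}$. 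The exponents are chosen according to $\delta$: for $\delta\in(0,\frac12)$ take $p_3=\frac{2}{1-2\delta}$ and $p_1=\frac1\delta\ge2$; for $\delta\in(\frac12,1)$ take $p_3=\infty$ (since $\WW^{2\delta,2}\hookrightarrow\LL^\infty$ when $2\delta>1$) and $p_1=2$; and for $\delta=\frac12$ take any finite $p_3$ with $p_1>2$ and $\frac1{p_1}+\frac1{p_3}=\frac12$. In each case
\begin{equation*}
\big|b(u,v,A^{-\delta}w)\big|\le C\,|A^{1/2}u|_{\LL^2}\,|A^{1/2}v|_{\LL^2}\,|w|_{\LL^2},
\end{equation*}
and taking the supremum over $|w|_{\LL^2}\le1$ gives \eqref{A.B}.

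\emph{Main obstacle.} Part (i) is routine once the spectral reduction is performed; the only point to keep in mind is that the second estimate holds as stated for $b\in(0,1]$, which is the range in which it is invoked. The genuine work is in Part (ii): one must justify that the trilinear form $b$, a priori defined only for test functions in $V$, extends by continuity to the less regular fields $A^{-\delta}w$ when $\delta<\frac12$ (this is exactly what the H\"older estimate above provides), and one must select the Sobolev exponents uniformly over $\delta\in(0,1)$, handling with care the borderline case $\delta=\frac12$, where $\WW^{1,2}\not\hookrightarrow\LL^\infty$ in dimension $2$ forces a finite choice of $p_3$. This is the content of the Sobolevskii / Giga--Miyakawa estimate cited in the statement.
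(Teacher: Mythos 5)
Your proposal is correct, and it is worth noting that the paper itself gives no proof of this lemma at all: part (i) is dispatched with a citation to Carelli--Prohl (their Lemma 2.2) and part (ii) with a citation to Sobolevskii and Giga--Miyakawa. Your argument is a self-contained version of exactly what lies behind those references. For (i), the spectral reduction $\|A^bS(t)\|_{{\mathcal L}(\LL^2,\LL^2)}=\sup_j\lambda_j^b e^{-\nu t\lambda_j}$ together with the elementary bounds $x^be^{-x}\le (b/e)^b$ and $1-e^{-x}\le x^b$ is the standard proof, and you correctly flag the one point where the lemma's statement is actually too generous: the smoothing estimate \eqref{A(I-S)} cannot hold for $b>1$ (near $t=0$ the left-hand side behaves like $t$, not $t^b$), so the restriction to $b\in(0,1]$ is not a defect of your proof but a necessary correction of the statement; since the paper only ever invokes \eqref{A(I-S)} with exponents $\tfrac12$, $\tfrac78-\epsilon$, $\alpha\in(0,\tfrac12)$, $\beta<\tfrac12-\delta$ and $\tfrac12-\epsilon$, all in $(0,1)$, nothing downstream is affected. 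For (ii), your duality argument -- bounding $b(u,v,A^{-\delta}w)$ by H\"older with exponents $(1/\delta,\,2,\,2/(1-2\delta))$ for $\delta<\tfrac12$, using $\mathrm{Dom}(A^\delta)=\WW^{2\delta,2}\cap H$ and the two-dimensional Sobolev embeddings, then taking the Riesz representative -- is precisely the $L^2$ specialization of the Giga--Miyakawa estimate, and your case analysis at $\delta=\tfrac12$ and $\delta\in(\tfrac12,1)$ is handled properly. Two small points you leave implicit but which are harmless here: the constant $C$ in \eqref{A.B} necessarily depends on $\delta$ (the Sobolev constant for $\WW^{1,2}\hookrightarrow\LL^{1/\delta}$ blows up as $\delta\to0$, consistent with $B(u,v)\notin\LL^2$ in general for $u,v\in V$), and the identification $\langle B(u,v),A^{-\delta}w\rangle=b(u,v,A^{-\delta}w)$ uses that $A^{-\delta}w$ is itself divergence-free, which holds since the functional calculus of $A$ preserves $H$. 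What your approach buys over the paper's is a verifiable, reference-free argument; what the citations buy the authors is brevity and the full $L^r$ generality of Giga--Miyakawa, which is not needed in this paper.
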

 We at first prove Lemma \ref{Holder-L2} about moments of the $\LL^2$-norms of time increments of the solution $u$ to \eqref{2D-NS}. 
 
 \noindent{\it Proof of Lemma \ref{Holder-L2}} We project \eqref{2D-NS} on divergence-free fields. 
 For $0\leq s < t \leq T$ we have $u(t)-u(s) = \sum_{i=1}^3 T_i(s,t)$, where using the mild formulation of the strong solution to \eqref{2D-NS} we have
 \begin{align*}
 T_1(s,t):=&\;  S(t) u_0 - S(s) u_0,\\
 T_2(s,t):=&\,  - \int_0^t S(t-r) B\big(u(r),u(r)\big)\,  dr + \int_0^s S(s-r) B\big(u(r),u(r)\big)\,  dr ,\\
 T_3(s,t):=& \, \int_0^t S(t-r) \,   d  {W}(r) - \int_0^s S(s-r)\,  d {W}(r).
 \end{align*}
 Using \eqref{A(I-S)} with $k=0$ and $\sup_{s\in [0,T]} \| S(s)\|_{{\mathcal L}(\LL^2,\LL^2)} <\infty$ we deduce
 \begin{equation}		\label{upper_T1}
 |T_1(s,t)|_{\LL^2} = \big| S(s) \, A^{-\frac{1}{2}} \, \big[ S(t-s)-{\rm Id}\, \big] \, A^{\frac{1}{2}} u_0\big|_{\LL^2} \leq C\, |t-s|^{\frac{1}{2}} \, \|u_0\|_V.
 \end{equation}
 We split $T_2(s,t)= T_{2,1}(s,t)+T_{2,2}(s,t)$, where
 \begin{align*}
 T_{2,1}(s,t)=&\,  - \int_0^s S(s-r) \big[ S(t-s) - {\rm Id}\, \big] B\big(u(r),u(r)\big)\, dr,\\
   T_{2,2}(s,t)=&\, -\int_s^t S(t-r)\, B\big(u(r),u(r)\big) dr. 
 \end{align*}
 The Minkowski inequality, \eqref{AS} and  \eqref{A(I-S)} for $k=0$,  and \eqref{A.B}   for $\delta = \frac{1}{8}$ imply for any 
  $\epsilon >0$
 \begin{align}		\label{upper_T21}
 |T_{2,1}(s,t)|_{\LL^2} \leq & \, \int_0^s \big| A^{1-\epsilon}  S(s-r)\; A^{-(\frac{7}{8}-\epsilon)} \big [S(t-s)- {\rm Id} \big]\; A^{-\frac{1}{8}} B\big( u(r),u(r)\big)\big|_{\LL^2}\, 
 dr \nonumber \\
 \leq & \,  C\, (t-s)^{\frac{7}{8}-\epsilon} \, \sup_{r\in [0,s]}\|u(r)\|_V^2 \, \int_0^s (s-r)^{-1+\epsilon} dr \nonumber \\
  \leq&\,  C (t-s)^{\frac{7}{8}-\epsilon} \sup_{r\in [0,s]}\|u(r)\|_V^2 . 
 \end{align}
 A similar argument based on the Minkowski inequality,  \eqref{AS} for $k=0$ and \eqref{A.B} implies
  \begin{align}		\label{upper_T22}
 |T_{2,2}(s,t)|_{\LL^2} \leq & \, \int_s^t \big| A^{\frac{1}{8}} S(t-r)\; A^{-\frac{1}{8}} B\big( u(r),u(r)\big)\big|_{\LL^2}  dr \nonumber \\
 \leq&\,  C \sup_{r\in [s,t]} \|u(r)\|_V^2 \int_s^t (t-r)^{-\frac{1}{8}} 
 dr 
\leq \, C\, (t-s)^{\frac{7}{8}} \, \sup_{r\in [s,t]}\|u(r)\|_V^2 .
 \end{align}		
Since  $\EE(\|u_0\|_V^{4q})<\infty$, using \eqref{bound_u} we have $\EE\big(\sup_{r\in [0,T]} \|u(r)\|_V^{4q}\big) <\infty$. Therefore, 
the upper estimates \eqref{upper_T1}--\eqref{upper_T22} imply  for $\epsilon := \frac{3}{8}$ 
 \[ \EE\big( |u(t)-u(s)|_{\LL^2}^{2q}\big) \leq  C(q)\Big[ |t-s|^q  \big[1+ \EE\big(\|u_0\|_V^{4q}\big) \big] 
 + \EE\big(|T_3(s,t)|_{\LL^2}^{2q}\big) \Big]. 
 \]
 We split $T_3(s,t)=T_{3,1}(s,t)+T_{3,2}(s,t)$, where
  \begin{align*}
 T_{3,1}(s,t)=\   \int_0^s S(s-r) \big[ S(t-s) - {\rm Id}\, \big] \,  d{W}(r), \quad 
   T_{3,2}(s,t)= \int_s^t S(t-r)\,   d{W}(r). 
 \end{align*}
The Burkholder-Davies-Gundy inequality, 
\eqref{AS} and \eqref{A(I-S)}  for $k=0$ imply that for $q\in [2,\infty)$
and $\tilde{\epsilon}\in (0, \frac{1}{2})$
\begin{align}		\label{upper_T31}
\EE\big( |T_{3,1}|_{\LL^2}^{2q}\big)\leq &
\EE\Big( \Big| \int_0^s  A^{\frac{1}{2}-\tilde{\epsilon}} S(s-r)\, \big[ A^{-\frac{1}{2}}(S(t-s)-{\rm Id})\big]
A^{\tilde{\epsilon}}\,   d{W}(r)\Big|^{2q} \Big)
 \nonumber \\
\leq & \, C_q \; \EE\Big( \Big| \int_0^s \| A^{\frac{1}{2}-\tilde{\epsilon}}S(s-r)\, \big[ A^{-\frac{1}{2}}
(S(t-s)-{\rm Id})\, \big]\|_{{\mathcal  L}(\LL^2,\LL^2)}^2\,
{\rm Tr}( A^{\tilde{\epsilon}} Q)\; dr\Big|^q\Big)
  \nonumber \\
\leq &\, C_q \; K_0^q\,   \Big| \int_0^s \| A^{\frac{1}{2}-\tilde{\epsilon}} S(s-r)\|_{{\mathcal L}( \LL^2,\LL^2)}^2
\| A^{-\frac{1}{2}} \big[ S(t-s)-{\rm Id}\,\big] \|_{{\mathcal L}(\LL^2,\LL^2)}^2 dr \Big|^q \nonumber \\
\leq &\, C(q,\tilde{\epsilon})  \, K_0^q \, |t-s|^{q} \, \Big( \int_0^s (s-r)^{-1+2\tilde{\epsilon}} dr\big)^q\nonumber
\\
\leq &\, C(q,\tilde{\epsilon},  K_0 )  |t-s|^{q}. 
\end{align}
 A similar argument, using $\sup_{r\in [0,T]} \| S(r)\|_{{\mathcal L}(\LL^2,\LL^2)} <\infty$, implies
 \begin{align}		\label{upper_T32}
 \EE\big( |T_{3,2}|_{\LL^2}^{2q}\big) \leq & \, C_q \; \EE\Big( \Big| \int_s^t \| S(t-r) \|_{{\mathcal L}(\LL^2,\LL^2)}^2\, 
{\rm Tr}(Q) \; dr\Big|^q\Big)
  \nonumber \\
\leq &\, C_q \, {\rm Tr}(Q)^q  \, \Big| \int_s^t \| S(t-r)\|_{{\mathcal L}(\LL^2,\LL^2)}^2\, dr\Big|^q \leq 
C(q,  {\rm Tr}(Q))  |t-s|^q.
 \end{align}
  Given $\eta \in (0,1]$, 
 the  upper estimates \eqref{upper_T1}--\eqref{upper_T32} conclude the proof of \eqref{holder-L2}.  
 \hfill $\square$
\bigskip

We next prove Lemma \ref{holder-q} about moment estimates of the $V$-norm of time increments. In order to have a constants which do not
depend on the time mesh, we have to consider time integrals of these increments.
 
 \noindent {\it Proof of Lemma \ref{holder-q}}
  We at first prove \eqref{holder-V_q}. Parts on the argument are similar to that in the proof of Lemma \ref{Holder-L2}; 
 we include them for the sake of completeness. 
 For $s,t\in [0,T]$ with $s<t$ we use the decomposition of $u(t)-u(s)$ introduced in the proof of Lemma \ref{Holder-L2}. 
 
 The upper estimates for the
integrals of $\LL^2$-norms  of  the time increments of $T_1$ and $T_2$ on the intervals $[t_j, t_{j+1})$, $j=1, ...,N$ are straightforward consequences 
 of \eqref{upper_T1} and \eqref{upper_T21}--\eqref{upper_T22} respectively; they require $\eta\in (0, 1)$.  We next give upper estimates of the gradient
 of the time increments in these deterministic  integrals.  Set
  \begin{align*}
 \tilde{T}_1(s,t):=&\;  A^{\frac{1}{2}} S(t) u_0 -  A^{\frac{1}{2}} S(s) u_0,\\
 \tilde{ T}_{2,1}(s,t)=&\,  - \int_0^s A^{\frac{1}{2}}   S(s-r) \big[ S(t-s) - {\rm Id}\, \big] B\big(u(r),u(r)\big) \, dr,\\
  \tilde{T}_{2,2}(s,t)=&\, -\int_s^t A^{\frac{1}{2}} S(t-r)\, B\big(u(r),u(r)\big)  dr. 
\end{align*} 
 Using once more \eqref{AS} and \eqref{A(I-S)} with $k=0$, we deduce that for any $\alpha >0$, 
 \begin{equation} 	\label{upper_tildeT1}
 |\tilde{T}_1(s,t)|_{\LL^2} = \big| A^\alpha S(s) \, A^{-\alpha}\big[ S(t-s)-{\rm Id}\,\big] \, A^{\frac{1}{2}}u_0 \big|_{\LL^2} \leq C\, s^{-\alpha}\, |t-s|^{\alpha} \, \|u_0\|_V.
 \end{equation}
 The Minkowski inequality,  the upper estimates \eqref{AS}--\eqref{A(I-S)} with $k=0$  and \eqref{A.B}  with $\delta \in \big( 0,\frac{1}{2}\big)$
 yield for $\beta \in \big(0, \frac{1}{2}-\delta\big)$
 \begin{align}		\label{upper_tildeT21}
 \big|\tilde{T}_{2,1}(s,t)\big|_{\LL^2} \leq & \int_0^s \big| A^{\delta + \frac{1}{2} + \beta}S(s-r)\; A^{-\beta}[S(t-s)-{\rm Id}\,] \; 
  A^{-\delta}B\big(u(r),u(r)\big) \big|_{\LL^2}\, dr \nonumber \\
& \leq   \, C  \Big( \!\int_0^s \! (s-r)^{-(\frac{1}{2}+\beta+\delta)} dr \Big) \sup_{r\in [0,s]} \|u(r)\|_V^2\, |t-s|^\beta  \nonumber \\
&\leq  C(\beta, \delta, T)  \sup_{r\in [0,s]} \|u(r)\|_V^2 |t-s|^\beta .
 \end{align}
 Using once more the Minkowski inequality, \eqref{AS}  for $k=0$ and \eqref{A.B} with $\delta \in \big( 0, \frac{1}{2}\big)$, we obtain
 \begin{align}		\label{upper_tildeT22}
 \big|\tilde{T}_{2,2}&(s,t)\big|_{\LL^2} \leq  \int_s^t  \big| A^{\delta + \frac{1}{2} }S(t-r)\; 
  A^{-\delta}B\big(u(r),u(r)\big) \big|_{\LL^2}\, dr \nonumber \\
 \leq & \,C \Big( \int_s^t (t-r)^{-\frac{1}{2}-\delta}\, dr \Big) \, \sup_{r\in [s,t]} \|u(r)\|_V^2 = C(T, K_0) \, \sup_{r\in [s,t]} \|u(r)\|_V^2\, |t-s|^{\frac{1}{2}-\delta} .
 \end{align}
 The upper estimates \eqref{upper_tildeT1}--\eqref{upper_tildeT22} imply for $\tilde{T}_2:= \tilde{T}_{2,1}+\tilde{T}_{2,2}$, 
 $\alpha, \delta \in \big( 0, \frac{1}{2}\big) $ and $\beta \in \big( 0, \frac{1}{2}-\delta \big)$
 \begin{align*}		
 \EE\Big( \Big|& \sum_{j=1}^N \int_{t_{j-1}}^{t_j}  \big[\big| \tilde{T}_1(s,t_j)\big|_{\LL^2}^2  + \big| \tilde{T}_{2}(s,t_j)\big|_{\LL^2}^2 \big] ds \Big|^q \Big)
  \leq  C(q, T,   \alpha,\beta) \\ 
\quad \times &  \Big\{ \EE\Big( \Big| \sum_{j=1}^N \int_{t_{j-1}}^{t_j} \! \!\!s^{-2\alpha} |t_j-s|^{2\alpha} \,\|u_0\|_V^2 \,ds\Big|^q\Big) 
  + \EE\Big( \Big| \sum_{j=1}^N \int_{t_{j-1}}^{t_j} \! \! |t_j-s|^{2\beta} \, \sup_{r\in [0,T]} \|u(r)\|_V^4 \, ds\Big|^q \Big) \Big\} \\
 \leq & \; C(q, T,\alpha,\beta) \Big\{ \Big( \frac{T}{N}\Big)^{2\alpha q} \Big| \int_0^T \!\! s^{-2\alpha} ds\Big|^q \EE(\|u_0\|_V^{2q}) 
 + N^q\, \Big( \frac{T}{N}\Big)^{(1+2\beta)q} \EE\Big( \sup_{r\in [0,T]} \|u(r)\|_V^{4q}\Big) \Big\} \\ 
 \leq & \; C\big(q,T,\alpha,\beta, K_0\big)  \Big(\frac{T}{N}\Big)^{2(\alpha \wedge \beta)q}\, \big[1+ \EE\big(\|u_0\|_V^{4q}\big) \big], 
 \end{align*} 
 where the last upper estimate is a consequence of \eqref{bound_u}. 
Hence  for  $\alpha \in \big( 0, \frac{1}{2}\big) $ and  $0<\beta < \frac{1}{2}-\delta$ for some $\delta \in \big( 0, \frac{1}{2}\big)$, we obtain 
\begin{equation} \label{upper_middle_V}
  \EE\Big( \Big| \sum_{j=1}^N \int_{t_{j-1}}^{t_j}  \big[\big|A^{\frac{1}{2}} {T}_1(s,t_j)\big|_{\LL^2}^2  + 
  \big| A^{\frac{1}{2}}  {T}_{2}(s,t_j)\big|_{\LL^2}^2 \big] ds \Big|^q \Big) \leq 
  C   \Big(\frac{T}{N}\Big)^{2(\alpha \wedge \beta)q}
 \end{equation}
 some some positive constant $C$. 

 We finally deal with the  stochastic integrals and write $T_3(s,t)=T_{3,1}(s,t) + T_{3,2}(s,t)$ as in the proof of Lemma \ref{Holder-L2}.
 Using the H\"older and Burkholder-Davies-Gundy inequality together with the upper estimates \eqref{AS}--\eqref{A(I-S)} with $k=1$ and \eqref{A.B}, we deduce 
 for  $\epsilon \in \big(0,\frac{1}{2}\big) $
 \begin{align}		\label{upper_tildeT31}
 \EE\Big( \Big| \sum_{j=1}^N& \int_{t_{j-1}}^{t_j}\!\! |A^{\frac{1}{2}}{T}_{3,1}(s,t_j)|_{\LL^2}^2\, ds \Big|^q\Big) \leq N^{q-1} \sum_{j=1}^N 
 \EE\Big( \Big| \int_{t_j}^{t_j} \!\! |A^{\frac{1}{2}} {T}_{3,1}(s,t_j)|_{\LL^2}^2\, ds\Big|^q\Big) \nonumber \\
 \leq & \, C N^{q-1} \sum_{j=1}^N \Big(\frac{T}{N}\Big)^{q-1} \!\! \int_{t_{j-1}}^{t_j} \!\!  \EE\Big(  \Big| \int_0^s A^{\frac{1}{2}}
   S(s-r) \big[ S(t_j-s)-{\rm Id}\, \big]  d{W}(r) \Big|_{\LL^2}^{2q} \Big) \, ds \nonumber \\
 \leq & \, C(q,T) \sum_{j=1}^N \int_{t_{j-1}}^{t_j} \! \EE\Big( \Big| \int_0^s \! \big\| 
   S(s-r) \big[ S(t_j-s)-{\rm Id}\,\big] \big\|_{ {\mathcal L}(\LL^2,\LL^2)}^2
 K_0  \, dr \Big|^q \Big) ds \nonumber \\
 \leq & \, C(q,T)   K_0^q  \, \sum_{j=1}^N \int_{t_{j-1}}^{t_j} \! \EE\Big( \Big| \int_0^s \! \!
  \big\| A^{\frac{1}{2}-\epsilon} S(s-r)\big\|_{ {\mathcal L}(\LL^2,\LL^2)}^2
 \nonumber \\
 &\qquad \qquad \times 
 \,\big \| A^{-\frac{1}{2}+\epsilon}\big[ S(t_j-s) - {\rm Id}\, \big]\big\|_{{\mathcal L}(\LL^2,\LL^2)}^2 \, 
  \, dr \Big|^q\Big)\, ds
 \nonumber \\
 \leq &\, C\big(q,T,K_0\big) \, \Big( \frac{T}{N}\Big)^{(1-2\epsilon)q}  \; \sum_{j=1}^N \int_{t_{j-1}}^{t_j} \Big| \int_0^s (s-r)^{-1+2\epsilon} dr
 \Big|^q \, ds \nonumber \\
 \leq & \, C(q,T, K_0 ,\epsilon)  \Big( \frac{T}{N}\Big)^{(1-2\epsilon)q}.
 \end{align} 
 
 A similar argument implies
 \begin{align}		\label{upper_tildeT32}
 \EE\Big( \Big| \sum_{j=1}^N &\int_{t_{j-1}}^{t_j}\!\!  |A^{\frac{1}{2}} {T}_{3,2}(s,t_j)|^2_{\LL^2} \, ds \Big|^q\Big) \nonumber \\
 \leq&\,  N^{q-1} \sum_{j=1}^N 
 \Big(\frac{T}{N}\Big)^{q-1} \int_{t_{j-1}}^{t_j} \EE \Big(\Big|  \int_s^{t_j}  A^{\frac{1}{2}} S(t_j-s) d{W}(r) \Big|_{\LL^2}^{2q} \Big) ds \nonumber \\
 \leq &\, C\big(q,T,K_0\big)\,  \sum_{j=1}^N \int_{t_{j-1}}^{t_j} \EE\Big( \Big| \int_s^{t_j} \|S(t_j-s)\|_{{\mathcal L}(\LL^2,\LL^2)}^2
  dr\Big|^q\Big)ds \nonumber \\
 \leq & \, C\big(q,T, K_0\big)\,  \sum_{j=1}^N \int_{t_{j-1}}^{t_j} (t_j-s)^q ds \leq 
 C\big(q,T, K_0 \big) \Big( \frac{T}{N}\Big)^q. 
 \end{align}
 Given  ${\eta} \in ( 0, 1)$, choose $\alpha \in \big(\frac{ {\eta}}{2},\frac{1}{2}\big)$, $0<\delta, \epsilon <\frac{1}{2}(1-{\eta})$,
and $\frac{\tilde{\eta}}{2}\leq \beta < \frac{1}{2}-\delta$. The inequalities \eqref{upper_middle_V}--\eqref{upper_tildeT32} yield 
$\EE\big( \big| \sum_{j=1}^N \! \int_{t_{j-1}}^{t_j} \!\!  |A^{\frac{1}{2}} [ u(t_j)-u(s) ] |^2_{\LL^2}  ds \big|^q\big) \leq 
 {C}  \big( \frac{T}{N}\big)^{{\eta} q}$.  Using  Lemma \ref{Holder-L2}, we deduce 
 \eqref{holder-V_q} for the time increments $u(t_j)-u(s)$. The above arguments clearly show that a similar upper estimate holds for the time increments 
 $u(s)-u(t_{j-1})$;
 this completes the proof of \eqref{holder-V_q}.  
 
 Using the Gagliardo-Nirenberg inequality \eqref{interpol}, 
 the upper estimates \eqref{holder-V_q} and \eqref{holder-L2}
 and H\"older's inequality, we  obtain \eqref{holder-L4_q}. This concludes the proof. 
 \hfill $\square$

\section{Proof of   exponential moments for the time  scheme $\bu^l$} \label{s-exp-mom-Euler}
 In this section, we prove that   the time Euler scheme $\bu^l$ has exponential moments; the exponential
coefficient is related to the viscosity $\nu$ and the strength of the noise ${\rm Tr}(Q)$ (as well as to the exponent $\gamma_0$ of the
exponential moments of $|A^{\frac{1}{2}} u_0|_{\LL^2}^2$ if $u_0$ is random).
\medskip

\noindent{\it Proof of Theorem \ref{exp-mom}.} 
 We prove below \eqref{exp-moments-Aul_al}, and further indicate the changes to be done to obtain \eqref{exp-moments-ul_al}.
Note that the arguments used to prove \eqref{exp-moments-ul_al} can be found in \cite{BeMi-FE},  Theorems~8.1 and 8.3. 
\smallskip

{\bf Proof of \eqref{exp-moments-Aul_al}} 
Since Lemma \ref{moments_uN} implies that $\EE\big( \frac{T}{N}\sum_{l=1}^N |A\bu^l|_{\LL^2}^2\big) <\infty$, using integration by parts
and the fact that $A^{\frac{1}{2}} \bu^l \in V$ a.s. by \eqref{C(T,4)}, we may write  \eqref{full-imp1Bis} with $\phi = A\bu^l$;  this yields a.s. 
\begin{align*}  
 \big( A^{\frac{1}{2}} \bu^l - A^{\frac{1}{2}} \bu^{l-1}, \, A^{\frac{1}{2}} \bu^l\big) &+ \frac{T}{N}\Big[ \nu (A \bu^l,  A\bu^l) + 
  \langle B(\bu^l, \ \bu^l),\, A\bu^l\rangle \Big] 
= \big( A^{\frac{1}{2}} \Delta_lW, \, A^{\frac{1}{2}} \bu^l\big).
\end{align*} 
Using \eqref{A-B}  
we deduce
\[ \big( A^{\frac{1}{2}} \bu^l - A^{\frac{1}{2}} \bu^{l-1}, \, A^{\frac{1}{2}} \bu^l\big) + \frac{T}{N}\nu  |A\bu^l|_{\LL^2}^2
= \big( A^{\frac{1}{2}}  \Delta_l W\, , \, A^{\frac{1}{2}} \bu^l\big).\]
The identity $(a-b,a)=\frac{1}{2} \big( |a|_{\LL^2}^2 - |b|_{\LL^2}^2 + |a-b|_{\LL^2}^2\big)$ implies a.s. 
\begin{align} 
  |A^{\frac{1}{2}}   \bu^l|_{\LL^2}^2 &- |A^{\frac{1}{2}} \bu^{l-1}|_{\LL^2}^2 
+ |A^{\frac{1}{2}} ( \bu^l-\bu^{l-1})|_{\LL^2}^2   +   \nu \frac{T}{N} |A\bu^l]_{\LL^2}^2  
\nonumber \\
 =&\; 2\big( A^{\frac{1}{2}} \Delta_l W\, , \, A^{\frac{1}{2}} \bu^{l}\big)  -  \,   \frac{T}{N}\,  \nu \,  |A\bu^l]_{\LL^2}^2  \label{Gl}\\
=&\;  2 \big( A^{\frac{1}{2}}\Delta_l W\, , \, A^{\frac{1}{2}} \bu^{l-1}\big) -    \frac{T}{N}  \nu |A\bu^l]_{\LL^2}^2  +
2 \big( A^{\frac{1}{2}} \Delta_l W\, , \, A^{\frac{1}{2}} [ \bu^l - \bu^{l-1}]\big). \label{Gl-1}
\end{align} 
For $l=1$, 
the Cauchy-Schwarz and Young inequalities   
imply for $\lambda \in (0,1)$
\[ 2 \big| \big( A^{\frac{1}{2}} \Delta_l W\, , \, A^{\frac{1}{2}} \bu^{1}\big)\big| \leq \frac{1}{\lambda}
 |A^{\frac{1}{2}} \Delta_1 W|_{\LL^2}^2
+ \lambda  |A^{\frac{1}{2}}  \bu^1|_{\LL^2}^2.\]
Hence, since $1\leq \frac{1}{1-\lambda}$,  \eqref{Gl} implies 
\[ 
|A^{\frac{1}{2}} \bu^1|_{\LL^2}^2  + \frac{T}{N} \nu |A\bu^1|_{\LL^2}^2  \leq \frac{1}{1-\lambda}  |A^{\frac{1}{2}} u_0|_{\LL^2}^2 
+  \frac{1}{\lambda (1-\lambda)}
\,  |A^{\frac{1}{2}}\Delta_1 W|_{\LL^2}^2   -   \frac{T}{N} \nu |A\bu^1|_{\LL^2}^2  .
\] 
For $l\geq 2$, a similar argument  using the Cauchy-Schwarz and Young inequalities implies
\[ 
2 \big( A^{\frac{1}{2}}  \Delta_lW, \, A^{\frac{1}{2}}[\bu^l-\bu^{l-1}] \big)  \leq  2   | A^{\frac{1}{2}} \Delta_lW ) |_{\LL^2}
|A^{\frac{1}{2}} (\bu^l-\bu^{l-1})|_{\LL^2} 
\leq  | A^{\frac{1}{2}} \Delta_l W|_{\LL^2}^2 + | A^{\frac{1}{2}} (\bu^l-\bu^{l-1})|_{\LL^2}^2. 
\]
Therefore,  for $l=2, ..., N$ \eqref{Gl-1} 
implies
\[   |A^{\frac{1}{2}} \bu^l|_{\LL^2}^2 - |A^{\frac{1}{2}} \bu^{l-1}|_{\LL^2}^2    + \frac{T}{N} \nu |A\bu^ l|_{\LL^2}^2 
\leq 2\big( A^{\frac{1}{2}} \Delta_l W\, ,\,  A^{\frac{1}{2}} \bu^{l-1}\big)
 -  \frac{T}{N} \nu |A\bu^ l|_{\LL^2}^2  +  |  A^{\frac{1}{2}} \Delta_l W \|_{\LL^2}^2.\]
Adding these inequalities for $l=1, \cdots, n$, we deduce for $n = 2, \cdots, N$   
\begin{align}		\label{max-nabla-ul}
  |A^{\frac{1}{2}} \bu^n|_{\LL^2}^2   & + \nu   \frac{T}{N} 
\sum_{l=1}^n  |A\bu^l|_{\LL^2}^2  
 \leq \frac{1}{1-\lambda} |A^{\frac{1}{2}} u_0|_{\LL^2}^2  
+ \frac{1}{\lambda (1-\lambda)}  \sum_{l=1}^n |A^{\frac{1}{2}} \Delta_l W|_{\LL^2}^2  \nonumber \\
 &\; -   \nu   \frac{T}{N} 
\sum_{l=1}^n  |A\bu^l|_{\LL^2}^2  
+ \sum_{l=2}^n 2
\big( A^{\frac{1}{2}} \Delta_l W \, , \, A^{\frac{1}{2}} \bu^{l-1}\big)  .
\end{align} 
 Let $Y$ be a $V$-valued centered Gaussian random variable with the same distribution as $W(1)$. 
  Then the covariance operator of $Y$ is $Q$ and condition \eqref{K0} is satisfied.                                                                                                                                                                                                                                                                                                                                                                                                                                                                                                                                                                                                                                                                                                                                                          
Using the  scaling and the independence of the time increments $\Delta_l W$, 
we deduce that for any  $\alpha>0$, 
\[ \EE\Big[ \exp\Big( \frac{ \alpha }{\lambda (1-\lambda) }     \sum_{l=1}^N |A^{\frac{1}{2}}  \Delta_lW |_{\LL^2}^2 \Big) \Big] = 
 \Big\{ \EE\Big[ e^{   \frac{ \alpha  T }{\lambda (1-\lambda)\, N } |A^{\frac{1}{2}} Y|_{\LL^2}^2} \Big]\Big\}^{N} .\]
Proposition 2.16 in \cite{daPZab} implies that if $\gamma\in   \big( 0, \frac{1}{2K_0}\big)$ and
 $\tilde{\gamma} \in \big[ \gamma, \frac{1}{2K_0})$, we have 
\[ \EE\big( e^{\gamma |A^{\frac{1}{2}} Y|_{\LL^2}^2}\big) \leq \exp\Big( \frac{1}{2} \sum_{i=1}^\infty \frac{( 2\gamma)^i}{i} K_0^i \Big)  \leq 
\exp\Big[ - \frac{1}{2} \ln\big(1- 2 {\gamma} K_0 \big) \Big]\leq \exp\Big[ \frac{\gamma K_0}{1-2\tilde{\gamma} K_0}\Big] <\infty.\] 
Hence, given $\lambda \in (0,1)$, if    $\frac{ \alpha T}{\lambda (1-\lambda) \, N }  < \frac{1}{2 K_0}$, 
(which is satisfied for any  $\alpha>0$ provided that $N$ is large enough), we obtain
 \begin{equation} 		\label{upper_discret}
\Big\{ \EE \Big( \exp\Big( \frac{ \alpha T }{\lambda (1-\lambda)\, N }   
|A^{\frac{1}{2}} Y|_{\LL^2}^2\Big) \Big\}^N \leq  C
\end{equation}  
for some constant $C$ which does not depend on $N$. Given $\alpha >0$ and $n=2,\cdots, N$, set
\[ M_n:=2\alpha \sum_{l=2}^n  \big( A^{\frac{1}{2}} \Delta_lW, \, A^{\frac{1}{2}} \bu^{l-1}\big) =
 2\alpha \sum_{l=2}^n  \big(  \Delta_lW, \, A \bu^{l-1}\big) . 
\]
Then $(M_n, {\mathcal F}_{t_n}, n=1, ..., N)$ is a discrete martingale. For $s\in [t_l, t_{l+1})$, $l=1, \cdots, N-1$, 
 set $\underline{s}=t_l$ and  $\bu^{\underline{s}}=
\bu^l$.  With these notations, $M_n=\tilde{M}_{t_n}$, where 
\[
 \tilde{M}_t=  2\alpha \int_{t_1}^t \big( A   \bu^{\underline{s}} \ , \,  d  W(s)\big), 
\quad t\in [t_1,T]. 
\]
The process 
$(\tilde{M}_t, {\mathcal F}_t, t\in [t_1,T])$ is a square integrable martingale, such that
\begin{align}  \label{upper_qua-var} 
\langle \tilde{M} \rangle_{t_n} & \leq \;  4\alpha^2 \int_{t_1}^{t_n}  {\rm Tr}\, (Q) \, 
 |A \bu^{\underline{s}}|_{\LL^2}^2   ds 
 =  \; 4\alpha^2 \, {\rm Tr}\,(Q)\,  \frac{T}{N}  \sum_{l=1}^{n-1}   |A\bu^{l}|_{\LL^2}^2 . 
\end{align}
 Using \eqref{max-nabla-ul} we deduce that for $\lambda\in (0,1)$, $\alpha >0$ and $\mu>1$,
\begin{align}		\label{upper_expalpha}
& \exp\Big(  \alpha \max_{1\leq n\leq N} \Big[ |A^{\frac{1}{2}} \bu^n|_{\LL^2}^2 +  \frac{T}{N}  \nu \sum_{l=1}^n 
  |A\bu^l|_{\LL^2}^2 \Big]  \Big) \nonumber \\
 &\quad  \leq \exp\Big( \frac{ \alpha}{1-\lambda} \|u_0\|_V^2\Big) 
\exp\Big( \frac{  \alpha}{\lambda(1-\lambda)}\sum_{l=1}^N |A^{\frac{1}{2}}  \Delta_l W|_{\LL^2}^2 \Big)
\exp\Big( \max_{2\leq n\leq N}  \big[ \mu M_n - \frac{\mu}{2} \langle \tilde{M} \rangle_{t_n}\big] \Big)  \nonumber \\
&\qquad \times  
\exp\Big( \max_{2\leq n\leq N} \Big[  \frac{\mu}{2}  \langle \tilde{M} \rangle_{t_n}
 -  \alpha \frac{T}{N}  \nu \sum_{l=1}^n   |A\bu^l|_{\LL^2}^2 \Big] . 
\end{align} 
{\bf Case 1. $u_0\in V$ is deterministic. } 
Let $u_0\in V$ be deterministic. 
For $\alpha \in \big(0, \frac{\tilde{\alpha}_0}{2} \big)$, 
we may choose $\mu>1$ such that  $\mu \alpha \leq  \frac{\tilde{\alpha}_0}{2} $;  using \eqref{upper_qua-var}
we deduce that for such a choice of $\alpha$ and $\mu$ we have a.s. 
\begin{align*}
\max_{2\leq n\leq N} \Big[  \frac{\mu}{2} \langle \tilde{M}\rangle_{t_n}  & -  \alpha \frac{T}{N}  \nu 
\sum_{l=1}^n  |A\bu^l|_{\LL^2}^2 \Big]
 \leq 
\Big(  2\, \mu \alpha \, {\rm Tr}(Q) \ - \nu \Big)\; \alpha  \frac{T}{N}   \sum_{l=2}^{N-1} 
|A\bu^{l}|_{\LL^2}^2  \leq 0.
\end{align*}
Thus, H\"older's inequality with conjugate exponents $\mu$ and $\frac{\mu}{\mu-1}$ implies  for $\lambda = \frac{1}{2}$
\begin{align}		\label{upper_u0det}
 \EE\Big[ \exp & \Big(  \alpha \max_{0\leq n\leq N}   | A^{\frac{1}{2}}  \bu^l|_{\LL^2}^2 + \frac{T}{N} \nu \sum_{l=1}^n 
   |A\bu^l|_{\LL^2}^2\Big)   \Big]
 \nonumber \\
 \quad & \leq   \; \exp\Big( \alpha|A^{\frac{1}{2}} u_0|_{\LL^2}^2 \Big) 
\Big\{ \EE\Big[ \exp\Big(  \frac{ 4 \mu   \alpha}{\mu-1 }\sum_{l=1}^N |A^{\frac{1}{2}}  \Delta_l W |_{\LL^2}^2 \Big)  \Big] \Big\}^{\frac{\mu-1}{\mu}} \nonumber  \\
&\qquad  \times \Big\{ \EE \Big[ \max_{2\leq n\leq N} \exp\Big(    \mu \tilde{M}_{t_n} - \frac{\mu^2}{2} \langle \tilde{M} \rangle_{t_n} \Big) \Big]\Big\}^{\frac{1}{\mu}}.
\end{align} 
Since $\{ \exp\big(    \mu \tilde{M}_t - \frac{\mu^2}{2} \langle \tilde{M} \rangle_{t} \big)\}_{t\in [t_1,T]}$ is an exponential martingale, choosing
$N$ large enough to ensure   $\frac{ 4\,  \tilde{\alpha}_0\, \mu}{ \mu-1}  \,\frac{T}{ N } < \frac{1}{2 K_0} $, 
\eqref{upper_discret} implies \eqref{exp-moments-Aul_al} for a deterministic $V$-valued initial condition. 
\smallskip

\noindent {\bf Case 2. $u_0$ is random}. 
We next prove \eqref{exp-moments-Aul_al} if $u_0$ is random  and \eqref{gamma0} is satisfied. 

As in Case 1, we impose  $\mu \in \big( 1, \infty)$ and $\alpha >0$  to ensure  $2 \mu \alpha {\rm Tr}\,(Q) -\nu \leq 0$,  so that 
$\frac{\mu}{2}  \langle \tilde{M} \rangle_{t_n}
 -  \alpha \frac{T}{N}  \nu \sum_{l=1}^n   |A\bu^l|_{\LL^2}^2 \leq 0$ a.s.   We then  
 use H\"older's inequality in \eqref{upper_expalpha} with exponents $p_1\in (1,\infty)$, $\mu $ and $p_3\in (1,\infty)$ 
such that $\frac{1}{p_1} + \frac{1}{\mu} + \frac{1}{p_3}=1$, to deduce  
\begin{align}		\label{upper_E_u0alea}
 &\EE\Big[ \exp\Big( \alpha \max_{0\leq l\leq N} | A^{\frac{1}{2}} \bu^l|_{\LL^2}^2  + \alpha \nu \frac{T}{N} \sum_{l=1}^N   |A \bu^l |_{\LL^2}^2 
 \Big) \Big] \leq   \; 
 \Big\{ \EE\Big(  \exp\Big(   \frac{ p_1 \alpha|A^{\frac{1}{2}} u_0|_{\LL^2}^2}{1-\lambda} \Big) \Big\}^{\frac{1}{p_1}} \\ 
 &\quad  \times 
 \Big\{ \EE \Big[ \max_{2\leq n\leq N} \exp\Big(    \mu \tilde{M}_{t_n} - \frac{\mu^2}{2} \langle \tilde{M} \rangle_{t_n} \Big) \Big]\Big\}^{\frac{1}{\mu}} 
\Big\{ \EE\Big[ \exp\Big(  \frac{  p_3 \,  \alpha}{ \lambda(1-\lambda)}\sum_{l=1}^N | A^{\frac{1}{2}} \Delta_l W|_{\LL^2}^2 \Big)  \Big] \Big\}^{\frac{1}{p_3}} .
\nonumber 
\end{align}
To use \eqref{gamma0}, we have to impose  ${\alpha} \, p_1 <\gamma_0$. The constraints  yield 
\[ 1> \frac{1}{p_1} + \frac{1}{\mu} \geq \frac{\alpha}{\gamma_0}  + \frac{2\alpha {\rm Tr}(Q)}{\nu}
 = \alpha \Big(  \frac{1}{\gamma_0} + \frac{2}{\tilde{\alpha}_0} \Big) = \alpha \, \frac{2\gamma_0 + \tilde{\alpha}_0}{\gamma_0
\tilde{\alpha}_0}.\] 
Suppose that $\alpha < \tilde{\beta}_1:= \tilde{\alpha}_0 \, \frac{\gamma_0}{2 \gamma_0+\tilde{\alpha}_0} $ and let $\delta \in (\alpha, \tilde{\beta}_1)$.
Set $p_1:=\frac{\gamma_0}{\delta}$ and  $\mu:= \frac{\tilde{\alpha}_0}{2\alpha}$; then for $1-\lambda := \frac{\alpha}{\delta} \in (0,1)$, 
we have $\frac{p_1 \alpha}{1-\lambda}= p_1\, \delta = \gamma_0$,  $2\mu \alpha {\rm Tr}(Q) -\nu =0$,
while $\frac{1}{p_1} + \frac{1}{\mu} < \delta\, \frac{2\gamma_0 + \tilde{\alpha}_0}{\tilde{\alpha}_0 \gamma_0} < 1$. Finally let $p_3\in (1,+\infty)$ be defined
by $\frac{1}{p_3} = 1- \frac{1}{p_1}-\frac{1}{\mu}$ and $N$ be large enough to ensure $\frac{p_3 \tilde{\beta}_1 T}{\lambda (1-\lambda) N}<\frac{1}{2K_0}$.
 Since $\exp(\mu  M_t - \frac{\mu^2}{2} \langle M\rangle_t)$ is an exponential martingale, we deduce that
all three factors in the right hand side of \eqref{upper_E_u0alea} are finite, which proves \eqref{exp-moments-Aul_al}. 
 \medskip

\noindent {\bf Proof of \eqref{exp-moments-ul_al}} We next indicate the small changes in the above proof that imply \eqref{exp-moments-ul_al}. Let us rewrite \eqref{max-nabla-ul} as follows
\begin{align*} 
 | A^{\frac{1}{2}} \bu^n|_{\LL^2}^2    
 \leq & \frac{1}{1-\lambda} |A^{\frac{1}{2}} u_0|_{\LL^2} ^2  
+ \frac{1}{\lambda (1-\lambda)}  \sum_{l=1}^n |A^{\frac{1}{2}} \Delta_l W\|_{\LL^2}^2 
 -  2 \nu   \frac{T}{N} 
\sum_{l=1}^n  |A\bu^l|_{\LL^2}^2  \\  &
 + \sum_{l=2}^n 2 
\big(  \Delta_l W \, , \, A \bu^{l-1}\big)  .
\end{align*} 
Let $M_n$ be defined as above. 
For $\alpha>0$, $\lambda\in(0,1)$ and $\mu>1$, an argument similar to that proving \eqref{upper_expalpha} yields
\begin{align}\label{EE_max_u0random}
& \exp\Big( \alpha \max_{1\leq n\leq N} |A^{\frac{1}{2}} \bu^n|_{\LL^2} ^2 \Big) 
 \leq \exp\Big( \frac{ \alpha}{1-\lambda} |A^{\frac{1}{2}} u_0|_{\LL^2} ^2\Big) 
\exp\Big( \frac{  \alpha}{\lambda(1-\lambda)}\sum_{l=1}^N |A^{\frac{1}{2}}  \Delta_l W|_{\LL^2}^2 \Big) \\
&\quad \times \exp\Big( \max_{2\leq n\leq N}  \big[ \tilde{M}_{t_n} - \frac{\mu}{2} \langle \tilde{ M} \rangle_{t_n}\big] \Big) 
\; \exp\Big( \max_{2\leq n\leq N} \Big[  \frac{\mu}{2}  \langle \tilde{ M} \rangle_{t_n}
 - 2 \alpha \frac{T}{N}  \nu \sum_{l=1}^n   |A\bu^l|_{\LL^2}^2\Big]  \Big) .  \nonumber 
\end{align}
\noindent {\bf Case 1. $u_0$ is deterministic} The last exponential factor in \eqref{EE_max_u0random} is a.s. upper estimated by 1 provided that 
$\mu \alpha \leq \frac{\nu}{ {\rm Tr}(Q)} =  \tilde{\alpha_0}$. 
If $u_0$ is deterministic, 
the H\"older inequality with conjugate exponents $\mu$ and $\frac{\mu}{\mu-1}$ concludes the proof of \eqref{exp-moments-ul_al} for large $N$.

\noindent {\bf Case 2. $u_0$ is random and \eqref{gamma0} holds. }
Let $\alpha < \tilde{\alpha}_0  \frac{\gamma_0}{ \gamma_0 +  \tilde{\alpha}_0}:= \tilde{\beta}_0$, $\delta \in (\alpha, \tilde{\beta}_0)$ and set $1-\lambda
= \frac{\alpha}{\delta}\in (0,1)$. 
 Set $\mu=\frac{\tilde{\alpha}_0}{\alpha}$ and
$p_1 = \frac{\gamma_0}{\delta} $; then $\frac{1}{p_1} + \frac{1}{\mu} < \frac{\delta}{\gamma_0} + \frac{\delta}{\tilde{\alpha}_0}<1$, 
$\frac{\alpha p_1}{1-\lambda}=\gamma_0$ and  the last exponential factor  in \eqref{EE_max_u0random} is a.s. upper estimated by 1. 
 Define $p_3\in (0,1)$ by $ \frac{1}{p_3}=1- \frac{1}{p_1}-\frac{1}{\mu} $. 
 Using again H\"older's inequality in \eqref{upper_expalpha} with exponents $p_1, \, \mu$ and $p_3$, 
 an argument similar to that used to prove \eqref{exp-moments-Aul_al} for a random initial condition 
concludes the proof of \eqref{exp-moments-ul_al}. The proof of Theorem \ref{exp-mom} is complete. 
 \hfill $\square$
 \medskip

\noindent {\bf Acknowledgements}
 Hakima Bessaih is partially supported by the Simons Foundation grant 582264. \\
 Annie Millet's research has been conducted within the FP2M federation (CNRS FR 2036).


\end{document}